\newcommand{\Cross}{$\mathbin{\tikz [x=1.4ex,y=1.4ex,line width=.2ex, red] \draw (0,0) -- (1,1) (0,1) -- (1,0);}$}%
\DeclareMathOperator{\dist}{dist}                           
\DeclareMathOperator{\lspan}{span}                          
\DeclareMathOperator{\supp}{supp}                           
\DeclareMathOperator{\Lip}{Lip}                             
\newcommand{\N}{\mathbb{N}}             
\newcommand{\R}{\mathbb{R}}             
\newcommand{\C}{\mathbb{C}}             
\newcommand{\set}[1]{\left\{{#1}\right\}}                   
\newcommand{\duality}[1]{\left<{#1}\right>}                 
\newcommand{\F}{\mathcal{F}}                                
\newcommand{\Free}{\mathcal{F}}                             
\newcommand{\restricted}{\mathord{\upharpoonright}}
\renewcommand{\restriction}{\mathord{\upharpoonright}}
\newcommand{\Orb}{\mathrm{Orb}} 
\def\<{\langle}
\def\>{\rangle}
\newcommand{\ep}{\varepsilon}
\theoremstyle{plain}
\newtheorem{theorem}{Theorem}[section]
\newtheorem{lemma}[theorem]{Lemma}
\newtheorem{corollary}[theorem]{Corollary}
\newtheorem{proposition}[theorem]{Proposition}
\theoremstyle{definition}
\newtheorem*{definition*}{Definition}
\newtheorem{definition}[theorem]{Definition}
\newtheorem{example}[theorem]{Example}
\newtheorem{question}{Question}
\theoremstyle{remark}
\newtheorem{remark}[theorem]{Remark}
\begin{document}

\title[On the dynamics of Lipschitz operators]{On the dynamics of Lipschitz operators}

\author[A. Abbar]{Arafat Abbar}

\author[C. Coine]{Cl\'ement Coine}

\author[C. Petitjean]{Colin Petitjean}

\address[A. Abbar]{LAMA, Univ Gustave Eiffel, UPEM, Univ Paris Est Creteil, CNRS, F--77447, Marne-la-Vall\'ee, France}
\email{arafat.abbar@univ-eiffel.fr}

\address[C. Coine]{Normandie Univ, UNICAEN, CNRS, LMNO, 14000 Caen, France}
\email{clement.coine@unicaen.fr}

\address[C. Petitjean]{LAMA, Univ Gustave Eiffel, UPEM, Univ Paris Est Creteil, CNRS, F--77447, Marne-la-Vall\'ee, France}
\email{colin.petitjean@u-pem.fr}

\date{} 


\begin{abstract}
By the linearization property of Lipschitz-free spaces, any Lipschitz map $f : M \to N$ between two pointed metric spaces may be extended uniquely to a bounded linear
operator $\widehat{f} : \F(M) \to \F(N)$ between their corresponding Lipschitz-free spaces.
In this note, we explore the connections between the dynamics of Lipschitz self-maps $f : M \to M$ and the linear dynamics of their extensions $\widehat{f} : \F(M) \to \F(M)$. This not only allows us to relate topological dynamical systems to linear dynamical systems but also provide a new class of hypercyclic operators acting on Lipschitz-free spaces. 
\end{abstract}

\subjclass[2010]{Primary 47A16, 54H20 ; Secondary 46B20, 54E35}

\keywords{Chaoticity, Cyclicity, Hypercyclicity, Lipschitz-free space, Supercyclicity, Transitivity, Weakly mixing. }

\maketitle


\section{Introduction}


A \textit{topological dynamical system} is a pair $(M,f)$ where $M$ is a metric space and $f : M \to M$ is continuous map. In topological dynamics, it is often assumed that M is compact. A  \textit{linear dynamical system} is a pair $(X,T)$ where   $X$ is a  Banach space (or, more generally, a Fr\'echet space) and $T$ is a bounded linear operator on $X$.
We refer to \cite{GrPe} (and references therein) for an introduction to dynamical systems as well as for more details on the next notions. In what follows, the pair $(M,f)$ stands for a topological dynamical system, while $(X,T)$ denotes a linear dynamical system. Let $\N$ denote the set of positive integers and let $\N_0=\N  \cup \{0\}$.  
For any point $x$ in  $M$, \textit{the orbit of $x$ under $f$} is defined by
$$
\Orb(x,f):=\lbrace f^{n}(x):\, n\in\N_0  \rbrace.
$$
We will say that \textit{$f$ is hypercyclic} if it a has a dense orbit, that is, there exists $x \in M$ such that $\Orb(x,f)$ is dense in $M$; such an $x$ will be called a \textit{hypercyclic element} for $f$.
Next,
we say that $f$ is  \textit{topologically transitive} if, for each pair of nonempty open sets $U,V$ of $M$,  there exists $n \in \N_0 $ such that $f^n(U) \cap V \neq  \emptyset$.
It is known that if $M$ has no isolated point then any hypercyclic map is also topologically transitive \cite[Proposition 1.15]{GrPe}. Conversely, if $M$ is a separable Baire space then a topologically transitive map is hypercyclic (see the remark after \cite[Theorem 1.2]{Survey}). We will also consider the following stronger notions:
\begin{itemize}[leftmargin=*]
    \item $f$ is \textit{(topologically) mixing} if  for each pair of nonempty open sets $U,V$ of $M$  there exists $N \in \N_0$ such that for every $n \geq N$,  $f^n(U) \cap V \neq  \emptyset$;
    \item $f$ is \textit{(topologically) weakly mixing} if $f\times f$ is topologically transitive on $M\times M$;
    \item $f$ is \textit{Devaney chaotic} if it is topologically transitive and its set of periodic points is dense in $M$. We recall that $x$ is a periodic point of $f$ if there exists $n \in \N$ such that $f^n(x)=x$, and we will denote by $\mathrm{Per}(f)$ the set of all periodic points of $f$.
\end{itemize}
It is straightforward that 
$$\text{mixing } \implies \text{ weakly mixing } \implies \text{ topologically transitive.} $$
Moreover, for every bounded linear operator $T$ defined on a separable Banach space $X$ (see \cite{Survey, GrPe}):
\[T \text{ is Devaney chaotic} \implies T \text{ is weakly mixing} \implies T \text{ is hypercyclic}.\]
Then, we say that $T$ is \textit{supercyclic} whenever there exists a vector $x\in X$ whose projective orbit, i.e. the set
$$
\Orb(\mathbb{K}\,x,T):=\lbrace \lambda T^nx:\, \lambda\in\mathbb{K},\, n\in\N_0 \rbrace,
$$
is dense in $X$. Such a vector $x$ is called a \textit{supercyclic vector} for $T$. 
Finally, recall that $T$ is \textit{cyclic} if there exists a vector $x\in X$, called a \textit{cyclic vector} for $T$, such that the linear span of the orbit of $x$ under $T$ is dense in $X$. Clearly, the following chain of implications holds:
\[\text{Hypercyclicity }\Rightarrow \text{ Supercyclicity }\Rightarrow \text{Cyclicity}.\]

\medskip

One of the main objectives of this paper is to relate topological dynamical systems to linear dynamical systems. Such a connection have already been explored for instance in \cite[Corollary 2.9]{Feldman} where it is built a universal linear operator $T :X \to X$ in such a way that, for any compact metric space $M$ and any continuous map $f : M \to M$, there is an invariant compact set $K \subset X$
such that $T\restricted_K$ is topologically conjugate to $f$. In our work, we consider a different point of view since we relate topological dynamical systems to linear dynamical systems by taking advantage of the fundamental linearization property of Lipschitz-free spaces. Let us briefly introduce the latter class of Banach spaces along with the mentioned linearization property; a more detailed overview will be made in Subsection~\ref{Subsection-Lipfree}. 
\smallskip

Let $(M,d)$ be a metric space equipped with a distinguished point denoted by $0 \in M$. Following \cite{GoKa_2003}, the Lipschitz-free space over $M$, denoted by $\F(M)$, is the canonical predual of the real Banach space $\Lip_0(M)$ of Lipschitz maps from $M$ to $\R$, vanishing at $0$, and equipped with the norm (the best Lipschitz constant $\mathrm{Lip}(f)$ of $f$):
$$\displaystyle
\mathrm{Lip}(f) :=  \sup_{x \neq y \in M} \frac{|f(x)-f(y)|}{d(x,y)}.$$
More precisely, 
$$\F(M) := \overline{ \mbox{span}}^{\| \cdot  \|}\left \{ \delta(x) \, : \, x \in M  \right \} \subset \Lip_0(M)^*,$$
where $\delta(x)$ is the evaluation functional defined by $\<f,\delta(x)\> = f(x)$ for any $f\in \Lip_0(M)$. It is readily seen that $\delta : x \mapsto \delta(x) \in \F(M)$ is an isometry. We wish to point out that the class of Lispchitz-free spaces is a powerful tool which has been used in various fields of Mathematics for proving deep results (e.g. \cite{GoKa_2003}), simplifying some proofs (e.g. \cite{RosendalTalk}) and constructing counterexamples (e.g. \cite{AlbiacKalton}).
The following linearization property of Lipschitz-free spaces is the cornerstone of our study.

\begin{proposition} \label{diagramfree}
Let $M$ and $N$ be two pointed metric spaces. Let $f \colon M \to N$ be a Lipschitz map such that $f(0_M) = 0_N$. Then, there exists a unique bounded linear operator $\widehat{f} \colon \F(M) \to \F(N)$ with $\|\widehat{f}\|=\mathrm{Lip}(f)$ and such that the following diagram commutes:
$$\xymatrix{
    M \ar[r]^f \ar[d]_{\delta_{M}}  & N \ar[d]^{\delta_{N}} \\
    \F(M) \ar[r]_{\widehat{f}} & \F(N)
  }$$
\end{proposition}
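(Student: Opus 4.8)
The plan is to construct $\widehat{f}$ on the dense subspace $\lspan\{\delta_M(x) : x \in M\}$ of $\F(M)$ by the only possible formula, namely $\widehat{f}(\delta_M(x)) = \delta_N(f(x))$ extended linearly, and then verify that this map is bounded with norm exactly $\Lip(f)$ so that it extends to all of $\F(M)$ by density and continuity. Uniqueness is essentially immediate: since $\{\delta_M(x) : x \in M\}$ spans a dense subspace, any two bounded linear operators that agree on these generators (which is forced by the commutativity of the diagram) must coincide.

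The first step is to observe that the commutativity condition $\widehat{f} \circ \delta_M = \delta_N \circ f$ literally says $\widehat{f}(\delta_M(x)) = \delta_N(f(x))$ for every $x \in M$, so there is no freedom at all in defining $\widehat{f}$ on the generators; extending linearly gives a well-defined linear map $\widehat{f}_0$ on $E := \lspan\{\delta_M(x) : x \in M\}$. The second and main step is the norm estimate. The cleanest route is duality: for $\mu \in E$ we have
$$\norm{\widehat{f}_0(\mu)} = \sup\{ \duality{g, \widehat{f}_0(\mu)} : g \in \Lip_0(N),\ \Lip(g) \le 1 \}.$$
Given such a $g$, the composition $g \circ f$ belongs to $\Lip_0(M)$ with $\Lip(g\circ f) \le \Lip(g)\Lip(f) \le \Lip(f)$, and one checks on generators (hence on all of $E$ by linearity) that $\duality{g, \widehat{f}_0(\mu)} = \duality{g \circ f, \mu}$. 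Therefore $\norm{\widehat{f}_0(\mu)} \le \Lip(f)\, \norm{\mu}$, which shows $\widehat{f}_0$ is bounded with $\norm{\widehat{f}_0} \le \Lip(f)$. By density, $\widehat{f}_0$ extends uniquely to a bounded operator $\widehat{f} : \F(M) \to \F(N)$ with $\norm{\widehat{f}} \le \Lip(f)$, and this extension still makes the diagram commute.

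For the reverse inequality $\norm{\widehat{f}} \ge \Lip(f)$, I would use that $\delta_M$ and $\delta_N$ are isometries together with the fact that $\F(M)$ is a \emph{metric} object: for $x \neq y$ in $M$,
$$\norm{\widehat{f}(\delta_M(x) - \delta_M(y))} = \norm{\delta_N(f(x)) - \delta_N(f(y))} = d_N(f(x), f(y)),$$
while $\norm{\delta_M(x) - \delta_M(y)} = d_M(x,y)$; taking the supremum of the ratio over $x \neq y$ gives $\norm{\widehat{f}} \ge \Lip(f)$. Combining the two inequalities yields $\norm{\widehat{f}} = \Lip(f)$.

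I do not expect a genuine obstacle here — this is the standard functorial property of the Lipschitz-free construction — but the one point requiring a little care is the identity $\duality{g, \widehat{f}_0(\mu)} = \duality{g\circ f, \mu}$: it must first be checked on the generators $\mu = \delta_M(x)$, where both sides equal $g(f(x))$, and then extended by linearity in $\mu$; one should also note that $g \circ f$ genuinely lies in $\Lip_0(M)$, i.e. that it vanishes at $0_M$, which uses the hypothesis $f(0_M) = 0_N$.
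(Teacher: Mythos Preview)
Your proof is correct. The paper does not give a detailed proof of this proposition: it states the more general \emph{universal extension property} (for every Banach space $X$ and every $f\in\Lip_0(M,X)$, the linear map $\overline{f}\big(\sum a_i\delta(x_i)\big)=\sum a_i f(x_i)$ is bounded with $\|\overline{f}\|=\Lip(f)$), refers to Weaver's book for the proof, and then remarks that Proposition~\ref{diagramfree} is a direct consequence obtained by applying this to the Lipschitz map $\delta_N\circ f\colon M\to\F(N)$.

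Your duality argument is precisely the standard way to establish the universal extension property itself (specialised here to $X=\F(N)$), so the two approaches are essentially the same once one unpacks the reference. Your write-up is self-contained and slightly more explicit about the lower bound $\|\widehat{f}\|\ge\Lip(f)$, which the paper leaves implicit.
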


In this paper, by \textit{Lipschitz operator} we mean any bounded linear operator $\widehat{f} : \F(M) \to \F(N)$ as defined in the previous proposition. 
\bigskip

A very natural and intriguing question is whether linear properties of $\widehat{f}$ can be characterised by properties on $f$, or vice versa. For instance, compact operators have been considered in \cite{vargas2, vargas}.  In this note, we choose to focus on the dynamical properties introduced above.  
More precisely, we are interested in the following general questions:
\begin{question}\label{Q1}
Assume that $f: M \to M$ has a given dynamical property, what can be said about $\widehat{f} : \Free(M) \to \Free(M)$? 
\end{question}
Or conversely:
\begin{question} \label{Q2}
Assume that $\widehat{f} : \Free(M) \to \Free(M)$ satisfies a given dynamical property, what can be said about $f: M \to M$?
\end{question}
Furthermore, another important motivation for exploring these questions is to provide a new class of hypercyclic linear operators. As we shall explain latter, for some metric spaces there is a good description of the associated Lipschitz-free spaces as $L^1(\mu)$ spaces. This allows us for instance to recover some well-known examples, such as backward or forward shift operators, but also to give some new hypercyclic operators acting on $L^1(\mu)$ spaces, therefore providing a different angle on the study of the linear dynamics on $L^1(\mu)$. Of course, even when Lipschitz-free spaces are not isomorphic to $L^1(\mu)$ spaces, they give rise to interesting examples of Banach spaces and therefore possibly interesting examples of hypercyclic linear operators.
\smallskip

To the best of our knowledge, these directions are rather new and not much explored. With respect to Question~\ref{Q1}, some answers are given by M. Murillo-Arcila and A. Peris in \cite[Theorem~2.3]{MuPe}. Indeed, they prove that if $T:X \to X$ is a bounded operator and $K\subset  X$ is an invariant set for $T$ such that $0\in K$ and $T \restricted_K$ is weakly mixing (mixing, weakly mixing and chaotic, respectively), then $T \restricted_{\overline{\lspan} K}$ is also weakly mixing (mixing, weakly mixing and chaotic, respectively). Since by the very definition of Lipschitz-free spaces we have $\overline{\lspan} \; \delta(M) = \F(M)$, as a direct consequence they could obtain that if a Lipschitz selfmap $f :M \to M$ is weakly mixing (mixing, weakly mixing and chaotic, respectively) then so is $\widehat{f}$ (see Example~2.4~(3) in \cite{MuPe}). As we shall explain later, the reverse implications are not true in general. In fact, we define in Example~\ref{Example2} a Lipschitz self-map $f : [0,1] \to [0,1]$ such that $\widehat{f}$ is mixing and Devaney chaotic while $f$ is not even topologically transitive. 
\bigskip

Let us now describe the content of the paper. In what follows, unless otherwise specified $f$ will stand for a base-point preserving Lipschitz mapping $f : M \to M$ and $\widehat{f}$ for its linearization obtained by Proposition~\ref{diagramfree}. We will first introduce below the notation as well as the main tools related to Lipschitz-free spaces which we will use throughout the paper. Next, we shall start our study in Section~\ref{Section-FO} by giving some properties which are preserved by the functor $f \mapsto \widehat{f}$. For instance, it is easy to see that a Lipschitz map $f : M \to N$ has a dense range if and only if $\widehat{f} : \F(M) \to \F(N)$  also has a dense range (Proposition~\ref{denserange}). Similarly, it is readily seen that
if the set of periodic points of $f$ is dense in $M$, then the set of periodic points of $\widehat{f}$ is dense in $\mathcal F(M)$ (the converse being false; see Proposition~\ref{PropPeriodic} and Example~\ref{ExamplePeriodic}).
Another observation is that a point $x \in M$ is a hypercyclic element for $f$ if and only if $\delta(x)$ is a cyclic vector for $\widehat{f}$ (Proposition~\ref{PropHCtoC}). In fact, if $\gamma$ is a hypercyclic vector for $\widehat{f}$ then $\gamma$ must be infinitely supported (Proposition~\ref{Prop-support}).
\smallskip

Then, it is well-known that a bounded linear operator is weakly mixing if and only if it satisfies the ``Hypercyclicity Criterion" (shortened HC, see Section~\ref{section_criteria} for more details). So one can use the connection between  $f$ and $\widehat{f}$ (that is the linearization property) to transfer the conditions on $\widehat{f}$ stated in the Hypercyclicity Criterion to metric conditions on $f$. Doing so, we obtain a criterion that we will call "\textit{Hypercylicity Criterion for Lipschitz operators}" (shortened HCL) which turns out to be very useful in a number of examples. Of course, if $\widehat{f}$ satisfies the HCL then $\widehat{f}$ satisfies the HC and therefore is hypercyclic (Theorem~\ref{HCriterion}). However the converse is not true in general as we will show in Example~\ref{ExampleHCnotHCL}. We also notice that 
if $M$ is a complete space without isolated points and if $f$ is weakly mixing, then $\widehat{f}$ satisfies the HCL (Theorem~\ref{Thm_WMimpliesHCL}). We summarize the above mentioned general relations in the following diagram.

\begin{center}
\begin{tikzpicture}
\node (M) at (0,0) {$\widehat{f}$ mixing};
\node (WM) at (3,0) {$\widehat{f}$ weakly mixing};
\node (HC) at (7,0) {$\widehat{f}$ satisfies the HC};
\draw[-implies,double equal sign distance] (M) -- (WM);
\draw[implies-implies,double equal sign distance] (WM) -- (HC);
\node (fM) at (0,-1.5) {$f$ mixing};
\node (fWM) at (3,-1.5) {$f$ weakly mixing};
\node (fHC) at (7,-1.5) {$\widehat{f}$ satisfies the HCL};
\node at (-0.5,-0.8) {\cite{MuPe}};
\node at (2.5,-0.8) {\cite{MuPe}};
\node at (6.6,-0.8) {\ref{HCriterion}};
\node at (4.8,-1.2) {\ref{Thm_WMimpliesHCL}};
\draw[-implies,double equal sign distance] (HC) to [bend left] node [midway,right]{\;\ref{ExampleHCnotHCL}} (fHC);
\node at (7.3,-0.8) {\Cross};
\draw[-implies,double equal sign distance] (fM) -- (fWM);
\draw[-implies,double equal sign distance] (fM) -- (M);
\draw[-implies,double equal sign distance] (fWM) -- (WM);
\draw[-implies,double equal sign distance] (fWM) -- (fHC);
\draw[-implies,double equal sign distance] (fHC) -- (HC);
\end{tikzpicture} 
\end{center}

Since every linear operator satisfying the Hypercyclicity Criterion is hypercyclic, an obvious question is whether the HCL implies that $f$ is topologically transitive or has a dense orbit. Unfortunately, this is not the case (see Example~\ref{ExampleHCLnotTransitive} or Example~\ref{Example2} for instance) and we do not know how to characterise Lipschitz maps satisfying the HCL in dynamical terms. This is actually the point where the theory probably becomes less obvious since many natural and tempting implications fail. For instance, $f$ having a dense orbit
does not necessarily imply that $\widehat{f}$ does so. In fact, $\widehat{f}$ might even not be supercyclic (see Example~\ref{ExamplefTTfhatNOTsuper}).
\begin{center}
\begin{tikzpicture}
\node (HC) at (0,0) {$\widehat{f}$ satisfies the HC};
\node (H) at (4,0) {$\widehat{f}$ hypercyclic};
\node (S) at (7,0) {$\widehat{f}$ supercyclic};
\node (C) at (10,0) {$\widehat{f}$ cyclic};
\draw[-implies,double equal sign distance] (HC) -- (H);
\draw[-implies,double equal sign distance] (H) -- (S);
\draw[-implies,double equal sign distance] (S) -- (C);
\node (fHC) at (0,-2) {$\widehat{f}$ satisfies the HCL};
\node (fTT) at (7,-2) {$f$ has a dense orbit};
\draw[-implies,double equal sign distance] (fHC) -- (HC);
\draw[-implies,double equal sign distance] (fHC) -- (fTT);
\node at (3.5,-1.7) {\ref{Example2}};
\node at (3.5,-2) {\Cross};
\draw[-implies,double equal sign distance] (fTT) -- node [midway,left]{\ref{ExamplefTTfhatNOTsuper}\;} (S);
\node at (7,-1) {\Cross};
\draw[-implies,double equal sign distance] (fTT) -- node [midway,above]{\ref{PropHCtoC}\;\;\;\;\;} (C);
\draw[-implies,double equal sign distance] (HC) -- (fTT);
\node at (4,-0.9) {\ref{Example2}};
\node at (3.5,-1) {\Cross};
\draw[-implies,double equal sign distance] (C) to [bend right] node [midway,above]{\ref{ExamplefTTfhatNOTsuper}} (S);
\node at (8.5,0.6) {\Cross};
\draw[-implies,double equal sign distance] (S) to [bend right] node [midway,above]{\ref{Counter-example}} (H);
\node at (5.5,0.6) {\Cross};
\draw[-implies,double equal sign distance] (H) to [bend right] node [midway,above]{Open question} (HC);
\end{tikzpicture} 
\end{center}

Most of our ``counter-examples" are built on discrete metric spaces $M$. This underlines that the structure of the metric space $M$ is as important as the Lipschitz self-map $f :M \to M$. For instance, there is no hypercyclic $\widehat{f}$ if $M$ is bounded and uniformly discrete (see Remark~\ref{UnifDiscrete}).
Leaving apart those pathological examples, 
one can obtain some positive results
by working on non-discrete metric spaces such as closed intervals in $\R$.  
Notably, we prove in Theorem~\ref{mainthminterval} that if $f : [a,b] \to [a,b]$ has a fixed point $c$ (considered to be the base-point of $M = [a,b]$) and is topologically transitive, then $\widehat{f}$ is weakly mixing. So the following implications hold for a base-point preserving Lipschitz self-map $f$ defined on a closed interval $M=[a,b]$:
\begin{center}
\begin{tikzpicture}
\node (WMC) at (4,0) {$\widehat{f}$ weakly mixing and chaotic};
\node (WM) at (11,0) {$\widehat{f}$ weakly mixing};

\draw[implies-implies,double equal sign distance] (WMC) -- (WM);

\node (fTT) at (12,-1.5) {$f$ is transitive};
\node (fWM) at (4,-1.5) {$f$ weakly mixing};
\node (fDC) at (8,-1.5) {$f$ Devaney chaotic};

\draw[implies-implies,double equal sign distance] (fDC) -- node [midway,above]{\cite{VeBe}} (fTT);
\draw[-implies,double equal sign distance] (fWM) -- (fDC);
\draw[-implies,double equal sign distance] (fTT) -- node [midway,left]{\ref{mainthminterval}} (WM);
\draw[-implies,double equal sign distance] (fDC) --  (WMC);
\node at (6.8,-0.75) {\ref{CorDC}};

\draw[-implies,double equal sign distance] (fDC) to [bend left] node [midway,below]{\cite{Wang2011} or \ref{Example2}} (fWM); 
\node at (6.15,-2.2) {\Cross} ;
\end{tikzpicture} 
\end{center}

\subsection{Notation} \label{Subsection-Notation}
Let us now introduce the notation that will be used throughout this  paper. If $(M,d)$ is a metric space, we will denote by $B(x,r)$ the open ball of center $x\in M$ and radius $r>0$. When $E$ is a subset of $M$, we let $\dist(x,E) := \inf\{d(x,y) \; : \; y \in E\}$ be the distance from $x$ to $E$. If $(N, d')$ is another metric space and $f : M \to N$ is a Lipschitz map, then we let
$$\mathrm{Lip}(f) = \sup_{x\neq y} \dfrac{d'(f(x), f(y))}{d(x,y)}$$
be the smallest Lipschitz constant of $f$.
For a Banach space $X$, the unit ball of $X$ will simply be denoted by $B_X$ and its (topological) dual space by $X^*$.
If $Y$ is another Banach space, we will write $X \equiv Y$ if there exists an isometric isomorphism between $X$ and $Y$.
Finally, if $f : E \to F$ is a map between two sets and $U$ is a subset of $E$, $f\restricted_U$ will stand for the restriction of $f$ to $U$.

\subsection{Lipschitz-free spaces} \label{Subsection-Lipfree}
We wish to end this introduction by giving a more detailed introduction to Lipschitz-free spaces theory (for the proofs, we refer the reader to \cite{Weaver2} where the name Arens-Eells spaces is used instead). Consider a pointed metric space $(M,d)$ with distinguished point $0\in M$. For a \textbf{real} Banach space $X$, we denote by $\Lip_0(M,X)$ the vector space of Lipschitz maps from $M$ to $X$ satisfying $f(0)=0$. Then $\mathrm{Lip}(\cdot)$ is a norm on $\Lip_0(M,X)$, and equipped with that norm,
$\Lip_0(M,X)$ is a Banach space. When the range space is $\R$, we simply write $\Lip_0(M)$ instead of $\Lip_0(M,\R)$. Now recall that 
 the Lipschitz-free space over $M$ 
is the following subspace of $\Lip_0(M)^*$:
$$\F(M) := \overline{ \mbox{span}}^{\| \cdot  \|}\left \{ \delta(x) \, : \, x \in M  \right \},$$
where $\delta(x)$ is the functional defined by $\<f,\delta(x)\> = f(x)$ for every $f\in \Lip_0(M)$. It is readily seen that $\delta(x) \in \Lip_0(M)^*$ with $\|\delta(x)\| = d(x,0)$. The map $\delta_{M} \colon x \in M \mapsto \delta(x) \in \F(M)$ is actually an isometry which in turns implies that $\delta(M)$ is a closed subset of $\F(M)$ whenever $M$ is complete. In fact, if $\overline{M}$ is the completion of $M$ then $\F(M)$ and $\F(\overline{M})$ are linearly isometric. So, even when it is not precisely specified, we will always assume our metric spaces to be complete.
Notice also that $\F(M)$ is separable if and only if $M$ is so. 
\smallskip

Their most important application to non-linear geometry is certainly their universal extension  property: for every Banach space $X$, for every $f \in \Lip_0(M,X)$, the unique linear operator $\overline{f} \colon \F(M) \to X$ defined on $\lspan \delta(M)$ by 
$$\overline{f}\Big(\sum_{i=1}^n a_i \delta(x_i)\Big)= \sum_{i=1}^n a_i f(x_i) \in X $$
is continuous with $\|\overline{f}\|=\mathrm{Lip}(f)$. In other words, the map $\Phi \colon f\in \Lip_0(M,X) \mapsto \overline{f} \in \mathcal{L}(\F(M),X)$
 is an onto linear isometry. As a direct consequence (in the case $X=\R$) we obtain that $\F(M)^* \equiv \Lip_0(M)$. Moreover the weak$^*$ topology coincides with the topology of pointwise convergence on bounded sets of $\Lip_0(M)$.
\smallskip

Afterward, if $N \subset M$ with $0 \in N$ then $\F(N)$ can be
canonically isometrically identified with the subspace $\mathrm{span}\{\delta(x) : x \in N\}$ of $\F(M)$.
This is due to a well known McShane-Whitney theorem (see \cite[Theorem 1.33]{Weaver2} e.g.)
according to which every
real-valued Lipschitz function on $N$ can be extended to $M$ with the same Lipschitz
constant.

\begin{remark}
In linear dynamics, one often study operators defined on \textbf{complex} Banach spaces. 
Here we want to highlight the fact that, by construction, Lipschitz-free spaces are Banach spaces over $\mathbb R$. Nevertheless, one could build a complex version of Lipschitz-free spaces by following the same steps as we did above. That is, we may consider the complex Banach space $\Lip_0(M,X)$, where $X$ is a Banach space over $\C$ as well, and then the evaluation functionals $\delta(x) \in \Lip_0(M,\C)^*$ are defined in a same fashion. This leads to the complex version of the Lipschitz-free space 
$$ \F_{\C}(M) := \overline{ \mbox{span}}^{\| \cdot  \|}\left \{ \delta(x) \, : \, x \in M  \right \} \subset \Lip_0(M,\C)^*.$$
One can prove that the universal extension property works perfectly fine and thus provides $\F_{\C}(M)^* \equiv  \Lip_0(M,\C)$. Now one should be careful since some features of $\F(M)$ might not work equally well for $\F_{\C}(M)$ (for instance $\F_{\C}(N)$ may not be isometric but only isomorphic to a subspace of $\F(M)$). Up to our knowledge, the complex version of Lipschitz-free spaces have not been much studied in the literature (see the comments at pages 86 and 125 in \cite{Weaver2}). 
In our work, we claim that the results still hold if one replaces $\F(M)$ by $\F_{\C}(M)$.
\end{remark}

We now recall the fundamental linearization property of Lipschitz-free spaces (already stated in Proposition~\ref{diagramfree}), which is a direct consequence of the universal extension property presented above.  If $f \colon M \to N$ is a Lipschitz map such that $f(0_M) = 0_N$, then there exists a linear bounded operator $\widehat{f} \colon \F(M) \to \F(N)$ such that $\|\widehat{f}\|=\mathrm{Lip}(f)$ and which satisfies:
$$ \text{For any } \gamma = \sum_{i=1}^n a_i\delta_M(x_i) \in \F(M) , \quad \widehat{f}(\gamma) = \sum_{i=1}^n a_i \delta_N(f(x_i)).$$
We recall that such an operator $\widehat{f}$ will be called \textit{Lipschitz operator.}
\smallskip

In this paper, we will focus on Lipschitz self-maps $f : M \to M$ preserving the distinguished point and we will often require that $f$ is transitive. It is readily seen that if $f$ is transitive then its Lipschitz constant $\mathrm{Lip}(f) > 1$. 
Notice also that if $0$ is an isolated point in $M$, then there is no Lipschitz map $f:M \to M$ and $x\in M$ such that $f(0)=0$ and $\Orb(x,f)$ is dense in $M$ (and thus no hypercyclic $f :M \to M$).

\begin{remark} \label{UnifDiscrete}
We recall that any infinite-dimensional Banach space supports a hypercyclic operator \cite{Ansari}. Yet, for some metric spaces $M$ there is no hypercyclic Lipschitz operator. For instance, let $M$ be a countable separable pointed metric space and suppose that:
\begin{itemize}
    \item $M$ is  uniformly discrete, that is there exists $\theta >0$ such that $d(x,y)> \theta$ for every $x\neq y$;
    \item $M$ is  bounded, i.e., $\mathrm{rad}(M):=\underset{x\in M}{\sup}\,d(x,0)<+\infty$.
\end{itemize}
Then it is known \cite[Proposition~4.4]{Kalton04} that $\F(M)$ is linearly isomorphic to the Banach space $\ell_1(\N)$ of real sequences indexed by $\N$ whose series is absolutely convergent. However, every orbit under the action of $\widehat{f}$ is bounded, so $\widehat{f}$ cannot be hypercyclic:
\begin{eqnarray*}
\forall \gamma = \sum_{i=1}^{\infty} a_i \delta(x_i), \forall n \in \N, \quad \| \widehat{f}^n \gamma\| \leq \mathrm{rad}(M) \sum_{i=1}^{\infty} |a_i| \leq C \cdot \mathrm{rad}(M) \|\gamma\|.
\end{eqnarray*}
\end{remark}

\begin{remark}
A change of the base point in a metric space $M$ does not affect the isometric structure of the associated Lipschitz-free space. Indeed, if $b \in M$ is the new base point (instead of $0$), then $f \in \Lip_0(M) \mapsto f - f(b) \in \Lip_b(M)$ defines a linear and surjective isometry.
Moreover, it is easy to check that this operator is continuous with respect to the topology of pointwise convergence, which in turn implies that it is weak$^*$-to-weak$^*$ continuous. Therefore its preadjoint is a surjective isometry between $\F_b(M)$ and $\F(M)$, where $\F_b(M)$ is the Lipschitz-free space over $M$ with $b$ considered to be the distinguished point.

Now imagine that a Lipschitz self-map $f : M \to M$ admits two fixed points, say $p$ and $q$. One can consider $\widehat{f_p} : \F_p(M) \to \F_p(M)$ and $\widehat{f_q} : \F_q(M) \to \F_q(M)$ obtained by the linearization property of Lipschitz-free spaces. Let us denote by $T: \F_p(M) \to \F_q(M)$ the isometry described in the previous paragraph. Then, it is easy to check that $\widehat{f_q} = T \circ \widehat{f_p} \circ T^{-1}$. Therefore $\widehat{f_p}$ and $\widehat{f_q}$ are conjugate and they will enjoy the very same dynamical properties.
\end{remark}

To conclude this short introduction to Lipschitz-free spaces theory, we recall two famous examples and then discuss a more generic point of view.

\begin{example}
 In the sequel, $L^1 = L^1([0,1])$ denotes the real Banach space of integrable functions from $[0,1]$ to $\R$ (as usual quotiented by the kernel of $\|\cdot\|_1$).
\begin{enumerate}[itemsep=3pt]
\item ``$(M,d) = (\N,|\cdot|)$". The linear operator satisfying $T \colon \delta(n) \in \F(\N) \mapsto \sum_{i=1}^n  e_i \in \ell_1(\N)$ is an onto linear isometry (the sequence $(e_n)_n \subset \ell_1$ stands for the canonical unit vector basis of $\ell_1$).
\item ``$M =( [0,1],|\cdot|)$". The linear operator $T \colon \delta(t) \in \F([0,1]) \mapsto \mathbbm{1}_{[0,t]} \in L^1([0,1])$ is an onto linear isometry.
\end{enumerate}
\end{example}
More generally, we can see the two previous examples as particular cases of a more general theorem. Indeed, A. Godard gave a very explicit formula in \cite{Godard_2010} to prove that if $M$ is a subset of an $\R$-tree which contains all of its branching points, then $\F(M)$ is isometric to an $L^1(\mu)$ space. We recall that an $\R$-tree
is an arc-connected metric space $(M,d)$ with the property that there is a unique arc connecting any pair of points $x\neq y\in M$ and it moreover is isometric to the real segment $[0,d(x,y)]\subset\R$.
A point $x\in M$ is called a \emph{branching point} of $M$ if $M\setminus\set{x}$ has at least three connected components. 

In this paper we use Godard's formula in a number of examples. We will always give the definition of the isometries for convenience, but we will never prove that they are indeed surjective isometries. In fact, most of the time we apply Godard's formula to a countably branching tree of height 1, we state the explicit isometry here for future reference.  

\begin{proposition} \label{PropTreesl1}
Let $M = \N \cup \{0\}$ be equipped with the tree metric $d$ described below
\begin{equation*}
\xymatrix @!0 @R=1pc @C=1.5pc {
1 && 2 && 3 & \ar@{.}[rr] &&& n & \ar@{.}[rr] && \\
\\
\\
& & & & \ar@/^/@{-}[lllluuu]^{d_1} \ar@{-}[lluuu]^{d_2} \ar@{-}[uuu]^{d_3} 0 \ar@/_/@{-}[rrrruuu]_{d_n}
}
\end{equation*}
That is, for every $n,m \in \N$, $d(n,0) =d_n > 0$ and $d(n,m) =d_n +d_m$. Then the linear map $\Phi : \F(M) \to \ell_1(\N)$ given by $\Phi(\delta(n)) = d(n,0)e_n = d_n e_n$ is a linear onto isometry. In particular, any Lipschitz operator $\widehat{f} : \F(M) \to \F(M)$ is conjugate to a bounded operator $T: \ell_1(\N) \to \ell_1(\N)$ such that $Te_n = d_{f(n)}d_n^{-1} e_{f(n)}$.
\end{proposition}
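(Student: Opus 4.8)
The plan is to verify the claimed formula by direct computation on the dense subspace $\lspan\{\delta(n) : n \in M\}$ and then extend by continuity. First I would define $\Phi$ on generators by $\Phi(\delta(n)) = d_n e_n$ and $\Phi(\delta(0)) = 0$, extend linearly, and check that it is an isometry onto $\ell_1(\N)$. Surjectivity is immediate since each $e_n = d_n^{-1}\Phi(\delta(n))$ lies in the range, and the range is closed (being all of $\ell_1$ once we know $\Phi$ is bounded below). For the isometry, one direction is the estimate $\|\Phi(\gamma)\|_1 \le \|\gamma\|_{\F(M)}$, which follows from the universal extension property applied to the map $n \mapsto d_n e_n \in \ell_1(\N)$: this map is Lipschitz with constant $1$ because $\|d_n e_n - d_m e_m\|_1 = d_n + d_m = d(n,m)$ for $n \ne m$ and $\|d_n e_n\|_1 = d_n = d(n,0)$, so its linearization $\overline{\Phi}$ has norm $\mathrm{Lip} = 1$, and $\overline{\Phi} = \Phi$ on $\lspan\,\delta(M)$. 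For the reverse inequality, given $\gamma = \sum_{i} a_i \delta(n_i)$ (finitely many nonzero terms, $n_i$ distinct elements of $\N$), I would exhibit a norm-one $g \in \Lip_0(M)$ with $\langle g, \gamma\rangle = \sum_i |a_i| d_{n_i} = \|\Phi(\gamma)\|_1$; the natural candidate is $g(n) = \varepsilon_n d_n$ where $\varepsilon_n = \mathrm{sign}(a_n)$ for the indices appearing and, say, $g(m) = d_m$ otherwise (this is $1$-Lipschitz on the star-shaped metric $d$ since $|g(n) - g(m)| \le d_n + d_m = d(n,m)$ and $g(0) = 0$). This gives $\|\gamma\|_{\F(M)} \ge \|\Phi(\gamma)\|_1$, completing the isometry.

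For the ``in particular'' part, I would set $T = \Phi \circ \widehat{f} \circ \Phi^{-1}$, which is automatically a bounded operator on $\ell_1(\N)$ conjugate to $\widehat{f}$. To identify $T$ it suffices to compute it on the basis vectors: $T e_n = \Phi(\widehat{f}(\Phi^{-1} e_n)) = \Phi(\widehat{f}(d_n^{-1}\delta(n))) = d_n^{-1}\Phi(\delta(f(n))) = d_n^{-1} d_{f(n)} e_{f(n)}$, using the defining property $\widehat{f}(\delta(n)) = \delta(f(n))$ from Proposition~\ref{diagramfree}. One subtlety: the diagram formula gives $\widehat{f}(\delta(n)) = \delta(f(n))$ only directly; if $f(n) = 0$ then $\delta(f(n)) = \delta(0) = 0$ and the formula $Te_n = d_{f(n)} d_n^{-1} e_{f(n)}$ should be read with the convention $d_0 = 0$, so it remains consistent (and indeed $f$ must fix the base point $0$, so $f(n) = 0$ is the only degenerate case).

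I do not expect a serious obstacle here; the statement is essentially a bookkeeping exercise combining Godard's theorem (or a direct check) with the functoriality in Proposition~\ref{diagramfree}. The only point requiring a little care is making the Lipschitz constant computations on the non-standard metric $d(n,m) = d_n + d_m$ precise — in particular checking that the candidate extremal function $g$ is genuinely $1$-Lipschitz, and that the map realizing $\Phi$ as a linearization is $1$-Lipschitz — but both reduce to the triangle-type identities displayed above. Since the paper explicitly says it will not prove surjectivity of such isometries, the cleanest exposition is to simply invoke Godard's formula for the $\R$-tree $M$ (whose unique branching point $0$ is contained in $M$) to get that $\F(M) \equiv L^1(\mu)$ for the appropriate $\mu$, observe that this $L^1(\mu)$ is isometric to $\ell_1(\N)$ since $M \setminus \{0\}$ decomposes into countably many segments of finite length, and then pin down the isometry via its values $\Phi(\delta(n)) = d_n e_n$ on the generators and the conjugacy formula for $T$ as above.
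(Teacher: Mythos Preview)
The paper does not actually give a proof of this proposition: it states it as a specialization of Godard's theorem \cite{Godard_2010} (that $\F(M)\equiv L^1(\mu)$ whenever $M$ is a subset of an $\R$-tree containing all its branching points) and refers the reader to \cite{Godard_2010, AlPePr_2019} for details. Your direct argument---realizing $\Phi$ as the linearization of the isometric embedding $n\mapsto d_n e_n$ to get $\|\Phi(\gamma)\|_1\le\|\gamma\|$, and exhibiting the norming function $g(n)=\mathrm{sign}(a_n)\,d_n$ for the reverse inequality---is correct and strictly more elementary, avoiding the $\R$-tree machinery entirely. The conjugacy computation $Te_n=\Phi\widehat f(d_n^{-1}\delta(n))=d_{f(n)}d_n^{-1}e_{f(n)}$ is exactly what the paper uses implicitly in later examples (e.g.\ Propositions~\ref{hyperleftshift} and \ref{ChaoShift}), and your handling of the degenerate case $f(n)=0$ via the convention $d_0=0$ is fine. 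Since you also note the Godard shortcut at the end, your proposal in fact subsumes the paper's approach while supplying the self-contained verification the paper omits.
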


We refer the reader to the papers \cite{Godard_2010, AlPePr_2019} for more details on this topic.

\section{First observations} \label{Section-FO}

As we already mentioned, our aim is to study whether the arrow $f \mapsto \widehat{f}$ carries on some dynamical information. First, we note that having a dense range is preserved through this functor.

\begin{proposition} \label{denserange}
Let $M$ and $N$ be two pointed metric spaces and let $f :M \to N$ be a Lipschitz map such that $f(0_M) = 0_N$. Then, the range of $\widehat{f}$ is dense in $\F(N)$ if and only if the range of $f$ is dense in $N$.
\end{proposition}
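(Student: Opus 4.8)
The plan is to prove both implications by exploiting the commutative diagram $\widehat{f}\circ\delta_M=\delta_N\circ f$ together with the density of $\lspan\,\delta(M)$ in $\F(M)$ and the isometric nature of $\delta$.

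\textbf{The easy direction ($\widehat{f}$ has dense range $\Rightarrow$ $f$ has dense range).} First I would argue by contraposition. Suppose $f(M)$ is not dense in $N$, so there is a point $y_0\in N$ and $r>0$ with $B(y_0,r)\cap f(M)=\emptyset$. The goal is to produce a nonzero functional on $\F(N)$, i.e. an element $g\in\Lip_0(N)$, that annihilates the range of $\widehat{f}$; this forces the range of $\widehat{f}$ not to be dense. A natural choice is a suitable truncation of the distance function, for instance $g(y):=\max\{0,\,r-d(y,y_0)\}-\max\{0,\,r-d(0_N,y_0)\}$, which is Lipschitz, vanishes at $0_N$, and is identically zero on $f(M)$ (because every point of $f(M)$ lies outside $B(y_0,r)$), while $g(y_0)=r-\max\{0,r-d(0_N,y_0)\}$. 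One must check this last quantity is nonzero, which holds provided $y_0\notin f(M)$ is chosen so that $g(y_0)\neq0$; since $f(M)$ misses an open ball, such a point can be found (if $d(0_N,y_0)<r$ one can shrink $r$, or pick $y_0$ with $d(y_0,f(M))$ large enough). Then for every $x\in M$, $\langle g,\widehat{f}(\delta_M(x))\rangle=\langle g,\delta_N(f(x))\rangle=g(f(x))=0$, and by linearity and continuity $g$ annihilates $\overline{\mathrm{span}}\,\widehat{f}(\delta(M))\supseteq\overline{\widehat{f}(\F(M))}$, contradicting density.

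\textbf{The other direction ($f$ has dense range $\Rightarrow$ $\widehat{f}$ has dense range).} Here I would use that the range of a bounded operator is dense iff its closure contains a dense subset. Since $\lspan\,\delta(N)$ is dense in $\F(N)$, it suffices to show every $\delta_N(y)$ with $y\in N$ lies in $\overline{\widehat{f}(\F(M))}$. Given $y\in N$ and $\varepsilon>0$, density of $f(M)$ gives $x\in M$ with $d_N(f(x),y)<\varepsilon$; then $\widehat{f}(\delta_M(x))=\delta_N(f(x))$ and, since $\delta_N$ is an isometry, $\|\delta_N(f(x))-\delta_N(y)\|=d_N(f(x),y)<\varepsilon$. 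Hence $\delta_N(y)\in\overline{\widehat{f}(\F(M))}$, and taking spans and closures yields $\overline{\widehat{f}(\F(M))}\supseteq\overline{\lspan}\,\delta(N)=\F(N)$.

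\textbf{Main obstacle.} The only delicate point is the first direction: one must exhibit a \emph{genuinely nonzero} separating Lipschitz function vanishing on $f(M)$, and the base-point normalization $g(0_N)=0$ has to be handled without accidentally killing $g$ everywhere. Choosing the truncated-distance bump and verifying its Lipschitz constant and its value at a well-chosen test point (possibly after shrinking $r$, or after noting one may assume $d(0_N,y_0)\ge r$ by moving $y_0$ within the ball that $f(M)$ avoids) is the crux; everything else is a routine application of the linearization property and the Hahn--Banach characterization of dense range.
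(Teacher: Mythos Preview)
Your proposal is correct and follows essentially the same route as the paper: one direction uses that $\delta_N$ is an isometry to pass density from $f(M)$ to $\lspan\,\delta_N(f(M))=\widehat{f}(\lspan\,\delta_M(M))$, and the other produces a nonzero $g\in\Lip_0(N)$ vanishing on $f(M)$ to separate $\delta_N(y_0)$ from the closed range. Your ``main obstacle'' is not actually an obstacle: since $f(0_M)=0_N$, the base point $0_N$ always lies in $f(M)$, so $d(0_N,y_0)\ge r$ automatically and the bump $g(y)=\max\{0,\,r-d(y,y_0)\}$ already satisfies $g(0_N)=0$ with $g(y_0)=r>0$; no subtraction, shrinking of $r$, or relocation of $y_0$ is needed. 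The paper phrases this direction by citing a McShane--Whitney-type extension to obtain $g\in\Lip_0(N)$ with $g(y)=1$ and $g\restricted_{\overline{f(M)}}=0$, which is the same idea.
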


\begin{proof} By the very definition of $\widehat{f}$, notice that $\widehat{f}(\lspan \delta(M)) = \lspan \delta(f(M))$.
\smallskip

\noindent $(\impliedby):$ If $f(M)$ is dense in $N$, then $\delta(f(M))$ is dense in $\delta(N)$ because the map $\delta$ is an isometry. Then, $\lspan \delta(f(M))$ is dense in $\overline{\lspan}\ \delta(N)  = \Free(N)$. Since $\lspan \delta(f(M)) = \widehat{f} (\lspan (\delta(M))) \subset \widehat{f}(\Free(M)),$ we get that $\widehat{f}(\Free(M))$ is dense in $\Free(N)$.
\smallskip

\noindent $(\implies):$ 
Assume that $f(M)$ is not dense in $N$ and let
$y\in N \setminus \overline{f(M)}$. Since $\dist\big(y,\overline{f(M)}\big):=\inf\big\{d(y,z) \; : \; z \in \overline{f(M)}\big\} >0$, we may define a Lipschitz map $g : N \to \mathbb{R}$ such that $g(y) = 1$ and $g(\overline{f(M)}) = \{0\}$ (such a map $g$ exists, see for instance the inf/sup-convolution formula \cite[Theorem 1.33]{Weaver2} to extend Lipschitz maps). In particular $g\in \Lip_0(N)$ and it is readily seen that $\langle g , \gamma \rangle = 0$ whenever $\gamma \in \overline{\widehat{f}(\F(M))}
$. Therefore, the fact that $\widehat{f}$ does not have a dense range follows from the next simple estimates:
\begin{align*}
 \dist\big(\delta(y) ,\overline{\widehat{f}(\F(M))}\big) 
 \geq \inf_{\gamma \in \overline{\widehat{f}(\F(M))}}\left| \left\langle \delta(y) - \gamma , \dfrac{g}{\|g\|} \right\rangle \right| = \frac{1}{\|g\|} >0.
\end{align*}
\end{proof}

\begin{corollary}\label{suphypdenserange}
Let $M$ be a pointed metric space and let $f :M \to M$ be a Lipschitz map such that $f(0) = 0$. If $\widehat{f}$ is supercyclic (or hypercyclic), then the range of $f$ is dense in $M$.
\end{corollary}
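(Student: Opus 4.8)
The plan is to deduce this immediately from Proposition~\ref{denserange} together with the elementary fact that supercyclic (and a fortiori hypercyclic) operators have dense range. First I would recall why a supercyclic operator $T$ on a Banach space $X$ must have dense range: if $x$ is a supercyclic vector, then the set $\{\lambda T^n x : \lambda \in \mathbb{K}, n \in \mathbb{N}_0\}$ is dense in $X$; all the terms with $n \geq 1$ lie in $T(X)$, so $\overline{T(X)} \supseteq \overline{\{\lambda T^n x : \lambda \in \mathbb{K}, n \geq 1\}}$. The only possibly missing part is the one-dimensional set $\mathbb{K}x$ coming from $n=0$. Since $X$ is infinite-dimensional whenever it supports a supercyclic operator (and in any case $\mathcal{F}(M)$ is infinite-dimensional as soon as $M$ has more than one point, which must hold here), removing a line from a dense set leaves a dense set, so $\overline{T(X)} = X$. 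Applying this with $T = \widehat{f}$ and $X = \mathcal{F}(M)$ gives that $\widehat{f}$ has dense range.

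Then I would invoke Proposition~\ref{denserange} with $N = M$: since $\widehat{f}$ has dense range in $\mathcal{F}(M)$, it follows that $f$ has dense range in $M$. The hypercyclic case is subsumed because hypercyclicity implies supercyclicity (as recorded in the chain of implications in the introduction), or one can argue directly that a dense orbit $\Orb(\gamma,\widehat{f})$ has all but one of its points ($\widehat{f}^n\gamma$, $n\geq 1$) inside $\widehat{f}(\mathcal{F}(M))$, so again $\widehat{f}$ has dense range.

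I do not expect any real obstacle here; the only point requiring a word of care is the removal of the single extra vector (the $n=0$ term, or the scalar line in the supercyclic case), which is handled by the infinite-dimensionality of $\mathcal{F}(M)$ — equivalently, by noting that $\overline{\mathbb{K}x}$ has empty interior, so it cannot ``carry'' the density on its own. Everything else is a direct citation of Proposition~\ref{denserange}.

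\begin{proof}
Suppose $\widehat{f}$ is supercyclic (the hypercyclic case follows since hypercyclicity implies supercyclicity). Let $\gamma \in \F(M)$ be a supercyclic vector, so that $A := \{\lambda \widehat{f}^n\gamma : \lambda \in \K,\ n \in \N_0\}$ is dense in $\F(M)$. Writing $A = \K\gamma \cup \widehat{f}(B)$ where $B := \{\lambda \widehat{f}^{n-1}\gamma : \lambda \in \K,\ n \in \N\}$, we see that $A \setminus \widehat{f}(\F(M)) \subseteq \K\gamma$. Since $\F(M)$ is infinite-dimensional (note that the existence of a supercyclic operator on $\F(M)$ forces $M$, hence $\F(M)$, to be infinite), the line $\K\gamma$ has empty interior, so $\overline{A \cap \widehat{f}(\F(M))} = \overline{A} = \F(M)$. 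Hence $\widehat{f}(\F(M))$ is dense in $\F(M)$. By Proposition~\ref{denserange} applied with $N = M$, the range of $f$ is dense in $M$.
\end{proof}
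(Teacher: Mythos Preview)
Your approach is exactly the paper's (the corollary is stated without proof, as an immediate consequence of Proposition~\ref{denserange} together with the standard fact that a supercyclic operator has dense range). There is, however, a small inaccuracy in your justification: supercyclicity does \emph{not} force infinite-dimensionality---any nonzero scalar on $\R$ is supercyclic, and an irrational rotation on $\R^2$ is supercyclic. What your argument actually needs is only that $\K\gamma$ have empty interior, i.e.\ $\dim\F(M)\geq 2$; the one-dimensional case is trivial (a nonzero operator on $\R$ is onto), so the proof goes through once you replace ``infinite-dimensional'' by ``of dimension at least $2$'' and handle $\dim\F(M)=1$ separately.
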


The forward shift  operator on  $\ell_1(\N)$ is cyclic, but its image is not dense in $\ell_1(\N)$. This allows us bellow to show that the cyclicity of $\widehat{f}$ does not imply that $f$ has a dense image in $M$. This also underlines the fact that the hypercyclicity of $f$ does not imply the supercyclicity of $\widehat{f}$. 

\begin{example} \label{4}
Let $f$ be the map defined on $M=\lbrace 1,2,3,...\rbrace\cup \lbrace0\rbrace$ by $f(0) = 0$ and $f(n)=n+1$ for every $n \in \N$. We equip $M$ with the tree metric $d$ given by: for all  $ n,m \geqslant 1$,   $d(n,0)=\frac{1}{n}$ and  $d(n,m)=d(n,0)+d(m,0)$.  
According to Proposition~\ref{PropTreesl1}, $\delta(n) \in \F(M) \mapsto \frac{1}{n} e_n \in \ell_1$ extends to a bijective linear isometry. In particular, $\widehat{f}$ is conjugate to the operator $T$ acting on $\ell_1$ by $Te_n = \frac{n}{n+1} e_{n+1}$.
Thus $\widehat{f}$ is cyclic while $f$ does not have a dense range. By Corollary $\ref{suphypdenserange}$, this implies that $\widehat{f}$ is not supercyclic.  
Notice also that $\Orb(1,f)$ is dense in $M$.
\end{example}

Nevertheless, the example above is somehow the only pathology that may occur, as this is shown by the next result.

\begin{proposition} \label{propcyclicdense}
If a Lispchitz operator $\widehat{f} : \F(M) \to \F(M)$ is cyclic, then either $f(M)$ is dense in $M$ or there exists $x \in M$ such that the range $f(M)$ is dense in $M\setminus \{x\}$.    
\end{proposition}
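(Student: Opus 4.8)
The plan is to argue by contrapositive on the ``second possibility'': suppose $f(M)$ is not dense in $M$, and show that then there is a single point $x$ whose removal makes $f(M)$ dense in $M \setminus \{x\}$. The starting point is the observation already used in the proof of Proposition~\ref{denserange}: a Lipschitz function $g \in \Lip_0(M)$ annihilates $\overline{\widehat f(\F(M))}$ exactly when $g$ vanishes on $\overline{f(M)}$, since $\widehat f(\lspan \delta(M)) = \lspan \delta(f(M))$ and $\langle g, \delta(f(z))\rangle = g(f(z))$. Consequently, writing $C = \overline{f(M)}$ and $E = \{h \in \Lip_0(M) : h|_C = 0\}$, the annihilator of $\overline{\widehat f(\F(M))}$ in $\Lip_0(M) = \F(M)^*$ is exactly $E$, and by the bipolar theorem $\overline{\widehat f(\F(M))} = {}^\perp E = \{\gamma \in \F(M) : \langle h, \gamma\rangle = 0 \ \forall h \in E\}$.

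Now I bring in cyclicity. If $\widehat f$ is cyclic with cyclic vector $\gamma_0$, then $\overline{\lspan}\{\widehat f^{\,n}\gamma_0 : n \in \N_0\} = \F(M)$. For $n \geq 1$ each $\widehat f^{\,n}\gamma_0 = \widehat{f^n}(\gamma_0)$ lies in $\overline{\widehat f(\F(M))} = {}^\perp E$, which is a closed subspace; hence $\overline{\lspan}\{\widehat f^{\,n}\gamma_0 : n \geq 1\} \subseteq {}^\perp E$. Since the full orbit (including $n=0$) spans a dense subspace, we get that $\F(M) = \overline{\lspan}(\{\gamma_0\} \cup {}^\perp E)$, so $\F(M) / \, {}^\perp E$ is at most one-dimensional, i.e. $E$ is at most one-dimensional as a subspace of $\F(M)^*$. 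Dualizing: the space of Lipschitz functions on $M$ vanishing on $C = \overline{f(M)}$ (equivalently, by McShane--Whitney restriction, $\Lip_0$ of the quotient metric structure ``$M$ with $C$ collapsed to the base point'') has dimension $\leq 1$. The case $\dim E = 0$ forces $C = M$ (if some $y \notin C$ one builds a nonzero $g$ vanishing on $C$ with $g(y)=1$ as in Proposition~\ref{denserange}), which is the first alternative. So assume $\dim E = 1$.

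The remaining task — and I expect this to be the main obstacle — is to deduce from $\dim E = 1$ that $M \setminus C$ is a single point. Suppose $M \setminus C$ contains two distinct points $x_1 \neq x_2$. I would like to produce two linearly independent elements of $E$. If at least one $x_i$ is at positive distance from $C$, say $\dist(x_1, C) > 0$, then a bump function supported near $x_1$ and a bump near $x_2$ (if $x_2$ is also isolated from $C$) — or, if $x_2$ is a limit of $C$, a function like $g(y) = \dist(y,C) \cdot \varphi(d(y,x_1))$ for a suitable cutoff $\varphi$ versus one built around a different scale or a different point — should give independence; one must be careful when $x_2 \in \overline{C}\setminus C$, but then $\dist(\cdot,C)$ itself is a nonzero element of $E$ not proportional to a bump around $x_1$. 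The cleanest route is probably: $E \ni h \mapsto$ its behaviour shows that if $\dim E = 1$ and $g_0$ spans $E$, then $\{g_0 \neq 0\} = M \setminus C$ must be ``indecomposable'' — any partition of $M \setminus C$ into two relatively clopen (in $M$) nonempty pieces, or any two points that can be separated at definite scale, yields a second independent function. Pinning down exactly why this forces a single point (rather than, say, a connected set) will require using that $\Lip_0$ functions separating even two points of $M\setminus C$ while vanishing on $C$ can always be chosen independent: given $x_1 \ne x_2$ in $M\setminus C$, take $g_1(y) = \max(0, \min(\dist(y,C), \tfrac13 d(x_1,x_2) - d(y,x_1)))$ and similarly $g_2$ around $x_2$; these have disjoint supports, are nonzero, and lie in $E$, contradicting $\dim E \le 1$. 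Hence $M \setminus C$ has exactly one point $x$, and $f(M)$ is dense in $M \setminus \{x\}$, completing the proof. The delicate verification is that these explicitly defined functions are genuinely Lipschitz, nonzero, vanish on $C$, and are linearly independent — routine but the place where care is needed.
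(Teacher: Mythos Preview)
Your proof is correct and is essentially the paper's argument recast in the dual space. The paper builds a projection $P:\F(M)\to\lspan(\delta(x_1),\delta(x_2))$ vanishing on $\overline{\lspan}\,\delta(f(M))$ and observes that $P$ sends $\lspan\Orb(\gamma_0,\widehat f)$ onto the line $\R\cdot P(\gamma_0)$; you instead show that the annihilator $E=\{h\in\Lip_0(M):h|_{\overline{f(M)}}=0\}$ has dimension at most one (via the injection $h\mapsto\langle h,\gamma_0\rangle$) and then exhibit two disjointly supported bump functions in $E$ --- precisely the functionals one would use to define $P$.
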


Before giving the proof, let us state the following simple facts which we will use throughout the section. 
\begin{lemma}\label{LemmaOrbit}
~~
\begin{enumerate}[itemsep = 3pt]
    \item For every $n \in \N$, $\widehat{f^n} = (\widehat{f})^n$.
    \item For every $x \in M$, $\Orb(\delta(x),\widehat{f}) = \delta(\Orb(x,f))$.
\end{enumerate}
\end{lemma}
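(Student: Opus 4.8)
The two assertions are elementary consequences of the defining property of $\widehat{f}$ together with the commuting diagram of Proposition~\ref{diagramfree}, so the proof will essentially be an induction and a set-theoretic identity, with no real obstacle.

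For part (1), I would argue by induction on $n$. The case $n=1$ is trivial. For the inductive step, both $\widehat{f^{n+1}}$ and $(\widehat{f})^{n+1}$ are bounded linear operators on $\F(M)$, so by density of $\lspan\,\delta(M)$ it suffices to check that they agree on each $\delta(x)$, $x\in M$. Using the commuting diagram (equivalently the formula $\widehat{g}(\delta_M(x))=\delta_N(g(x))$), one computes
$$(\widehat{f})^{n+1}(\delta(x)) = \widehat{f}\big((\widehat{f})^{n}(\delta(x))\big) = \widehat{f}\big(\widehat{f^{n}}(\delta(x))\big) = \widehat{f}\big(\delta(f^{n}(x))\big) = \delta\big(f(f^{n}(x))\big) = \delta\big(f^{n+1}(x)\big),$$
which by the same formula equals $\widehat{f^{n+1}}(\delta(x))$. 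Since both sides are continuous and linear and coincide on the total set $\delta(M)$, they are equal on all of $\F(M)$.

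For part (2), fix $x\in M$. Using part (1) and again the identity $\widehat{f^n}(\delta(x))=\delta(f^n(x))$, we get
$$\Orb(\delta(x),\widehat{f}) = \{(\widehat{f})^n(\delta(x)) : n\in\N_0\} = \{\widehat{f^n}(\delta(x)) : n\in\N_0\} = \{\delta(f^n(x)) : n\in\N_0\} = \delta\big(\{f^n(x):n\in\N_0\}\big) = \delta(\Orb(x,f)),$$
as claimed. No step here is delicate; the only thing to be careful about is to invoke the density of $\lspan\,\delta(M)$ and the continuity of the operators involved when passing from the generators $\delta(x)$ to the whole space in part (1), and otherwise to simply unwind the definitions.
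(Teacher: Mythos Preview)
Your proof is correct. The paper does not actually give a proof of this lemma: it is introduced as ``simple facts'' and left to the reader, so your argument---induction via the identity $\widehat{g}(\delta(x))=\delta(g(x))$ together with density of $\lspan\,\delta(M)$ for part~(1), and then unwinding definitions for part~(2)---is exactly the kind of routine verification the authors had in mind.
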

\vspace{-0.12cm}

\begin{proof}[Proof of Proposition~\ref{propcyclicdense}]
Assume that $M\setminus \overline{f(M)}$ contains at least two points, say $x_1, x_2 \in M$. Set $E:=\lspan(\delta(x_1), \delta(x_2))$. Let $P : \Free(M) \to E$
be a continuous projection from $\Free(M)$ onto $E$ such that $P\restricted_{\overline{\lspan}\{\delta(f(M))\}}=0$.  If there exists $\gamma\in \Free(M)$ such that  $\lspan \Orb(\gamma,\widehat{f}) $ 
 is dense in $\Free(M)$, then 
 $$P\left(\lspan \Orb(\gamma,\widehat{f})  \right) = \lspan \left\lbrace P(\widehat{f}^n(\gamma)), n\geq 0 \right\rbrace$$
is dense in $E$.
However, notice that for any $n\geq 1$, $P(\widehat{f}^n(\gamma))=0$. Indeed, if $y\in M$ then $f^n(y)\not \in \{x_1, x_2\}$, so that $P(\widehat{f}^n(\delta(y)))=P(\delta(f^n(y))=0$. By linearity, this implies that $P(\widehat{f}^n(z))=0$ for any $z\in \lspan(\delta(M))$. By approximation and continuity of $P$, we get that $P(\widehat{f}^n(\gamma))=0$.
The latter implies that $P\left(\lspan \Orb(\gamma,\widehat{f})  \right) = \mathbb{R}.P(\gamma)$
 which is of dimension $1$ and hence cannot be dense in the $2-$dimensional space $E$. 
\end{proof}
Next, we deduce from Lemma~\ref{LemmaOrbit}~(1) the next proposition. 
\begin{proposition} \label{PropPeriodic}
Let $M$ be a pointed metric spaces and let $f :M \to M$ be a Lipschitz map such that $f(0) = 0$.
If the set of periodic points $\mathrm{Per}(f)$ of $f$ is dense in $M$, then the set of periodic points $\mathrm{Per}(\widehat{f})$ of $\widehat{f}$ is dense in $\Free(M)$.
\end{proposition}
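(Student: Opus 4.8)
The plan is to show that the set $\lspan\{\delta(x) : x \in \mathrm{Per}(f)\}$ is dense in $\F(M)$ and consists entirely of periodic points of $\widehat{f}$ — more precisely, that it is contained in the closure of $\mathrm{Per}(\widehat{f})$ — after which density of $\mathrm{Per}(f)$ gives the conclusion by a standard approximation argument.

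First I would observe that if $x \in \mathrm{Per}(f)$ with $f^p(x) = x$, then by Lemma~\ref{LemmaOrbit}~(1) we have $\widehat{f}^p(\delta(x)) = \widehat{f^p}(\delta(x)) = \delta(f^p(x)) = \delta(x)$, so $\delta(x) \in \mathrm{Per}(\widehat{f})$. The difficulty is that $\mathrm{Per}(\widehat{f})$ need not be a linear subspace: if $x_1$ has period $p_1$ and $x_2$ has period $p_2$, then $a_1\delta(x_1) + a_2\delta(x_2)$ is fixed by $\widehat{f}^{\,\mathrm{lcm}(p_1,p_2)}$, so it \emph{is} periodic for $\widehat{f}$; thus in fact a finite linear combination of $\delta$'s of periodic points of $f$ is genuinely a periodic point of $\widehat{f}$ (the common period being the lcm of the individual periods). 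So actually $\lspan\{\delta(x) : x \in \mathrm{Per}(f)\} \subseteq \mathrm{Per}(\widehat{f})$ directly, with no need to pass to a closure at this stage.

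Next I would use that $\mathrm{Per}(f)$ is dense in $M$, hence $\delta(\mathrm{Per}(f))$ is dense in $\delta(M)$ since $\delta$ is an isometry, hence $\lspan\, \delta(\mathrm{Per}(f))$ is dense in $\overline{\lspan}\,\delta(M) = \F(M)$. Combining with the previous paragraph, $\mathrm{Per}(\widehat{f})$ contains a dense subset of $\F(M)$, so $\mathrm{Per}(\widehat{f})$ is dense in $\F(M)$.

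I do not anticipate a serious obstacle here; the only point requiring a moment's care is the closure of $\mathrm{Per}(\widehat{f})$ under finite linear combinations via the lcm of periods, which should be stated explicitly rather than glossed over. One could even present the whole argument in one line: for $\gamma = \sum_{i=1}^n a_i \delta(x_i)$ with each $x_i \in \mathrm{Per}(f)$ of period $p_i$, setting $p = \mathrm{lcm}(p_1,\dots,p_n)$ gives $\widehat{f}^p(\gamma) = \sum_i a_i \delta(f^p(x_i)) = \sum_i a_i \delta(x_i) = \gamma$, and such $\gamma$ are dense. (The converse failing — that density of $\mathrm{Per}(\widehat f)$ need not imply density of $\mathrm{Per}(f)$ — is the content of the referenced Example~\ref{ExamplePeriodic} and is not part of what must be proved.)
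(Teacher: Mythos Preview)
Your proof is correct and essentially identical to the paper's own argument: show that $\lspan\,\delta(\mathrm{Per}(f))$ is dense in $\F(M)$ and that each element $\gamma=\sum_i a_i\delta(x_i)$ with $x_i\in\mathrm{Per}(f)$ is fixed by $\widehat{f}^{\,p}$ for a common multiple $p$ of the periods. The only cosmetic difference is that the paper uses the product of the periods rather than the lcm.
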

\begin{proof}
Since $\mathrm{Per}(f)$ is dense in $M$, $\lspan \delta(\mathrm{Per}(f))$ is dense in $\F(M)$. Now if $\gamma = \sum_{i=1}^n a_i \delta(x_i) \in \lspan \delta(\mathrm{Per}(f))$, then for every $i \in \{1 , \ldots , n\}$ there exists $n_i \in \N$ such that $f^{n_i}(x_i)=x_i$. We define $n = \prod_{i=1}^n n_i$, notice that $f^{n}(x_i)=x_i$ for every $i \in \{1, \ldots , n\}$. We conclude the proof by using Lemma~\ref{LemmaOrbit}~(1) to show that 
$$ \duality{(\widehat{f})^{n} , \gamma} = \duality{\widehat{f^{n}} , \gamma} = \sum_{i=1}^n a_i \delta(f^{n}(x_i)) = \sum_{i=1}^n a_i \delta(x_i) = \gamma,$$
which implies that $\lspan \delta(\mathrm{Per}(f)) \subset \mathrm{Per}(\widehat{f})$.
\end{proof}

Another direct consequence of Lemma~\ref{LemmaOrbit}~(2) is that $\Orb(\delta(x),\widehat{f}) \subseteq \delta(M)$, and thus neither $\Orb(\delta(x), \widehat{f})$ nor $ \Orb(\mathbb R \,\delta(x),\widehat{f})$ (when $M\neq \{0,x\}$) can be dense in $\F(M)$. In other words, $\delta(x) \in \F(M)$ will never be a hypercyclic (or supercyclic) vector for the operator $\widehat{f}$. 
Nevertheless, $\delta(x) \in \F(M)$ may be a cyclic vector for $\widehat{f}$.

\begin{proposition} \label{PropHCtoC} 
Let $M$ be a metric space with non-isolated distinguished point $0\in M$, $f :M \to M$ be a Lipschitz map such that $f(0) = 0$, and let  $x\in M$. Then the following assertions are equivalent:
\begin{enumerate}[label={$(\arabic*)$}]
    \item $x$ is a hypercyclic element for  $f$.
    \item $\delta(x)$ is a cyclic vector for  $\widehat{f}$.
\end{enumerate}
\end{proposition}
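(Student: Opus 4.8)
The plan is to prove the equivalence by showing each implication separately, using the commuting diagram from Proposition~\ref{diagramfree} together with the fact that $\overline{\lspan}\,\delta(M) = \F(M)$ and the density of linear span under Lipschitz-linearization.

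\medskip

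\noindent\textbf{$(1) \Rightarrow (2)$.} Suppose $\Orb(x,f)$ is dense in $M$. By Lemma~\ref{LemmaOrbit}~(2) we have $\Orb(\delta(x),\widehat{f}) = \delta(\Orb(x,f))$, and since $\delta : M \to \F(M)$ is an isometry, $\delta(\Orb(x,f))$ is dense in $\delta(M)$. Taking linear spans, $\lspan \Orb(\delta(x),\widehat{f}) = \lspan \delta(\Orb(x,f))$ is dense in $\lspan \delta(M)$, hence dense in $\overline{\lspan}\,\delta(M) = \F(M)$. This shows $\delta(x)$ is a cyclic vector for $\widehat{f}$. Note this direction does not even need the hypothesis that $0$ is non-isolated.

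\medskip

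\noindent\textbf{$(2) \Rightarrow (1)$.} This is the substantive direction and the main obstacle. Suppose $\delta(x)$ is cyclic for $\widehat{f}$, i.e. $\lspan\{\widehat{f}^n\delta(x) : n \in \N_0\}$ is dense in $\F(M)$. By Lemma~\ref{LemmaOrbit}, $\widehat{f}^n\delta(x) = \delta(f^n(x))$, so $\lspan\{\delta(y) : y \in \Orb(x,f)\}$ is dense in $\F(M)$. We must deduce that $\Orb(x,f)$ itself is dense in $M$. The idea is to argue by contradiction: if $\Orb(x,f)$ is not dense, pick $p \in M$ and $r>0$ with $B(p,r) \cap \Orb(x,f) = \emptyset$. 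We want to produce a functional $g \in \Lip_0(M)$ that ``sees'' a point in $B(p,r)$ but annihilates (or is controlled on) all of $\lspan\,\delta(\Orb(x,f))$, contradicting density. The subtlety compared to Proposition~\ref{denserange} is that here we are dealing with the \emph{linear span} of the orbit, not its closed linear span as a range of an operator, so a functional vanishing on $\delta(\Orb(x,f))$ would have to vanish on the whole span — but a single bump function separating $B(p,r)$ from $\Orb(x,f)$ will not in general vanish at $0$. This is precisely where the hypothesis that $0$ is non-isolated enters: we need to choose the bump function carefully so that it both lies in $\Lip_0(M)$ (vanishes at $0$) and detects a point near $p$ while being small on the orbit.

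\medskip

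\noindent More concretely, I would proceed as follows. Since $0$ is non-isolated, there are points of $M$ arbitrarily close to $0$; combined with the fact that $\overline{\Orb(x,f)} \neq M$, one can find a ``test'' configuration — either a point $q$ near $p$ together with a point near $0$ — on which to build, via the McShane–Whitney extension, a Lipschitz function $g$ with $g(0)=0$, with $g$ vanishing on $\overline{\Orb(x,f)}$ (which contains $0$ since $f(0)=0$ and $x \in \Orb$... actually $0$ need not be in the orbit, but $g$ can be made to vanish on $\overline{\Orb(x,f)} \cup \{0\}$), and with $g(q) \ne 0$ for some $q \notin \overline{\Orb(x,f)}$. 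Then $\langle g, \gamma\rangle = 0$ for every $\gamma \in \lspan\,\delta(\Orb(x,f))$, while $\langle g, \delta(q)\rangle \ne 0$, so $\dist(\delta(q), \lspan\,\delta(\Orb(x,f))) \geq |g(q)|/\|g\|_L > 0$, contradicting density. The point where care is genuinely needed is ensuring simultaneously $g(0)=0$ and $g \equiv 0$ on $\overline{\Orb(x,f)}$ while $g(q) \ne 0$: this is automatic from McShane–Whitney as long as $q \notin \overline{\Orb(x,f)} \cup \{0\}$, and such a $q$ exists because $M \setminus \overline{\Orb(x,f)}$ is open and nonempty, hence cannot be reduced to $\{0\}$ when $0$ is non-isolated (if the complement were exactly $\{0\}$, then $0$ would be isolated from $\overline{\Orb(x,f)}$... one must check this forces $0$ isolated in $M$, which uses non-isolatedness to get the contradiction). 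I would isolate this last topological observation as the crux of the argument.
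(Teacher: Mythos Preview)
Your proof is correct and follows essentially the same approach as the paper's. For $(2)\Rightarrow(1)$, the paper streamlines your ``crux'' observation by noting directly that, since $0$ is non-isolated, $\Orb(x,f)$ not dense in $M$ is equivalent to not dense in $M\setminus\{0\}$, which immediately yields a point $y\neq 0$ with $B(y,\varepsilon)\cap\overline{\Orb(x,f)}=\emptyset$; the separating functional is then taken as a bump $z\mapsto d(z,B(y,C\varepsilon)^c)$ with $C$ small enough that $0\notin B(y,C\varepsilon)$, which is exactly your McShane--Whitney construction in a slightly more explicit form.
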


\begin{proof}
(1) $\implies$ (2): Thanks to Lemma~\ref{LemmaOrbit}, $\Orb(\delta(x),\widehat{f}) = \delta(\Orb(x,f))$. So, if $\Orb(x,f)$ is dense in $M$ then $\lspan \delta(\Orb(x,f))$ is dense in $\F(M)$, which in turn implies that $\lspan \Orb(\delta(x),\widehat{f})$ is dense in $\F(M)$.

(2) $\implies$ (1): Assume that $\Orb(x,f)$ is not dense in $M\setminus \set{0}$. So there exists $y \neq 0 \in M$ and $\ep >0$ such that $ B(y,\ep) \cap \overline{\Orb(x,f)}   = \emptyset$. Then let $F \in \Lip_0(M)\equiv \F(M)^*$ be such that $F(y) >0$ and $\mathrm{supp}(F) \subseteq B(y,\ep)$ (for instance $z \in M \mapsto  d\big(z,B(y,C \ep)^c\big)$ for some small enough constant $C$ which ensures that $0 \not \in B(y, C\ep)$). Clearly, for every $\gamma \in \lspan \overline{\Orb(\delta(x), \widehat{f})}$,
 $\< F ,  \gamma\> =0$. However $\< F ,  \delta(y)\> >0$ which implies that 
$$\dist_{\F(M)}\left(\delta(y) , \lspan \Orb(\delta (x),\widehat{f}) \right)  = \dist_{\F(M)}\Big(\delta(y) , \lspan  \delta(\Orb(x, f)) \Big) > 0,$$ which means that  $\lspan \Orb(\delta(x),\widehat{f})$ is not dense in $\F(M)$.
\end{proof}

\subsection{Supports of supercyclic vectors}

As we  mentioned above,  evaluation functionals, that is elements with only one non-zero single point in their support, cannot be hypercylic or supercyclic vectors for some $\widehat{f}$. In fact,
we can say more: if $\gamma \in \F(M)$ is a hypercyclic (or supercyclic) vector for $\widehat{f}$, then $\gamma$ cannot be finitely supported.

\begin{definition}
Let $M$ be a pointed metric space. 
We say that $ \gamma\in \F(M)$ is finitely supported if
$$\gamma \in \mathrm{span}\lbrace \delta(x) : x \in M \rbrace.$$
The support of such a $\gamma$ is denoted by $\supp \gamma$ and is the smallest finite subset $F$ of $M$ which contains $0$ and
such that $\gamma \in \mathrm{span}\lbrace \delta(x) : x \in F \rbrace$.

More generally \cite{AP20, APPP_2019}, the support of any element $\gamma \in \F(M)$, also denoted by $\supp \gamma$, is the intersection of all closed subsets $K$ of $M$ such that $\gamma \in \F(K) \subset \F(M)$. It follows from \cite[Theorem~2.1]{APPP_2019} that $\gamma \in \F(\supp \gamma)$ and of course $\supp \gamma$ is the smaller closed subset with this property. 
\end{definition}

\begin{proposition} \label{Prop-support}
Let $M$ be an infinite complete metric space. If $\gamma \in \F(M)$ is a supercyclic vector for a Lipschitz operator $\widehat{f} : \F(M) \to \F(M)$, then $\gamma$ is infinitely supported.  
\end{proposition}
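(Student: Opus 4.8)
The plan is to argue by contradiction: suppose $\gamma \in \F(M)$ is a supercyclic vector for $\widehat{f}$ and is finitely supported, say $\supp\gamma = F = \{0, x_1, \dots, x_k\}$. The key observation is Lemma~\ref{LemmaOrbit}~(1) together with the fact that $\widehat{f}$ sends $\F(\supp\gamma)$ into $\F(f(\supp\gamma))$, so each iterate $\widehat{f}^n(\gamma) = \widehat{f^n}(\gamma)$ is supported on the finite set $f^n(F)$, which has at most $k+1$ points. Hence every element of the projective orbit $\Orb(\K\gamma, \widehat{f})$ is supported on a set of cardinality at most $k+1$. Since $M$ is infinite, I would pick $k+2$ distinct points $y_0 = 0, y_1, \dots, y_{k+1}$ in $M$ and look at the $(k+1)$-dimensional subspace $E := \lspan\{\delta(y_1), \dots, \delta(y_{k+1})\} \subset \F(M)$. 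If the projective orbit were dense in $\F(M)$, then (composing with a bounded projection or with suitable functionals in $\Lip_0(M)$) its image would be dense in $E$. But I claim the image of the projective orbit in $E$ is contained in a much smaller set, and this is the crux.

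The heart of the matter is a dimension/pigeonhole count on which coordinates of $E$ an orbit element can ``see''. A finitely supported element $\beta$ with $|\supp\beta| \le k+1$, when projected onto $E$, can be nonzero on at most $k$ of the basis directions $\delta(y_1),\dots,\delta(y_{k+1})$ (the base point $0$ always being in the support and contributing nothing in $E$). So $P(\beta)$ lies in the union $\bigcup_{|S|=k} E_S$ of the $\binom{k+1}{k} = k+1$ coordinate subspaces $E_S := \lspan\{\delta(y_i) : i \in S\}$, each of dimension $k$. This union is a closed set with empty interior in the $(k+1)$-dimensional space $E$ (a finite union of proper subspaces is nowhere dense), hence it cannot be dense in $E$. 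To make ``projected onto $E$'' precise I would not literally use a projection of $\F(M)$ onto $E$ — one should be slightly careful that such a projection need not kill the right things — but instead work directly: choose, for each $i$, a function $F_i \in \Lip_0(M)$ with $F_i(y_i) = 1$ and $F_i$ vanishing on a neighbourhood of every other $y_j$ and of $0$, and argue that the ``coordinate vector'' $(\langle F_1,\beta\rangle, \dots, \langle F_{k+1}, \beta\rangle) \in \K^{k+1}$ has at most $k$ nonzero entries whenever $\beta$ is an orbit element; here I use that $\langle F_i, \delta(z)\rangle = F_i(z)$ is nonzero only when $z$ is near $y_i$, and $\supp(\widehat{f}^n\gamma) = f^n(F)$ has $\le k$ points other than $0$, so at most $k$ of the neighbourhoods can be hit. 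Then density of $\Orb(\K\gamma,\widehat{f})$ in $\F(M)$ would force density of these coordinate vectors in $\K^{k+1}$, contradicting that they all lie in the nowhere dense set $\{v \in \K^{k+1} : v \text{ has a zero coordinate}\}$.

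I expect the main obstacle to be the bookkeeping in the previous paragraph: one must ensure the neighbourhoods of the $y_i$ can be chosen pairwise disjoint and avoiding $0$ (fine, since $M$ is a metric space and the points are distinct — shrink radii), and one must handle the case where some $y_i$ happens to be the base point $0$ itself (simply avoid choosing $0$ among $y_1,\dots,y_{k+1}$, which is possible as $M$ is infinite). One also needs that a finite union of proper linear subspaces of a finite-dimensional space over $\R$ or $\C$ is not the whole space — elementary, e.g.\ by a Baire category or measure argument, or directly. The argument for hypercyclic $\gamma$ is of course a special case ($\lambda = 1$). A cosmetic point: the proposition as stated assumes $M$ infinite and complete; completeness is not really needed for this argument, but it is harmless to keep it.
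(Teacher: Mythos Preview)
Your argument is correct and rests on the same core idea as the paper's: orbit elements of a finitely supported $\gamma$ have uniformly bounded support size, and one separates $k+1$ auxiliary points by pairwise disjoint bump functions in $\Lip_0(M)$ to derive a pigeonhole contradiction. The paper organises this slightly differently: it isolates the key step as the standalone lemma that $FS_n(M)=\{\mu\in\F(M):|\supp\mu|\le n\}$ is weakly (hence norm) closed, whose proof is precisely your disjoint-neighbourhood argument, and then simply observes that the projective orbit sits inside the proper closed set $FS_n(M)$. Your version unwinds that lemma inline via the continuous surjection $\Phi:\F(M)\to\K^{k+1}$, $\Phi(\mu)=(\langle F_i,\mu\rangle)_i$, and the nowhere-dense union of coordinate hyperplanes; this is entirely equivalent, and the paper's packaging has the minor advantage of recording a fact of independent interest (weak closedness of $FS_n(M)$).
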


Indeed, any element in $\Orb(\gamma,\widehat{f})$ should have a support of cardinal less or equal to the one of $\gamma$. So our claim follows from the next result which was proved by R. Aliaga, C. Noûs, the third named author and A. Proch\'azka. We are deeply grateful to them for allowing us to include their result as well as its proof.

\begin{lemma}[\textbf{Aliaga, Noûs, Petitjean and Proch\'azka}] Let $M$ be a complete pointed metric space.
Let $FS_n(M) = \{\gamma \in \F(M) \; : \; | \supp \gamma | \leq n  \}$ be the set of finitely supported elements whose support contains at most $n$ points of $M$. Then $FS_n(M)$ is weakly closed.
\end{lemma}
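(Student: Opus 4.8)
The plan is to show that $FS_n(M)$ is weakly closed by establishing that its complement is weakly open, or equivalently (since we are dealing with a convex-free situation, so I should be careful) by showing that $FS_n(M)$ is closed under weak limits of nets. I would actually argue via sequential weak closedness combined with a reduction, but the cleanest route is the following: take a net $(\gamma_\alpha)$ in $FS_n(M)$ with $\gamma_\alpha \wconv \gamma$ in $\F(M)$, and produce a set $F \subseteq M$ with $|F| \le n$, $0 \in F$, such that $\gamma \in \F(F)$. The key structural input is the description of $\supp\gamma$ via the functionals in $\Lip_0(M)$ that "localize": namely, if $K$ is closed and $\gamma \notin \F(K)$, then there is a Lipschitz function $h \in \Lip_0(M)$ supported off a neighborhood of $K$ (more precisely, with $h$ vanishing on a neighborhood of $K$) such that $\langle h, \gamma\rangle \ne 0$; this is essentially \cite[Theorem~2.1]{APPP_2019} and the support calculus for Lipschitz-free spaces.

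First I would set up coordinates for the finitely supported elements: each $\gamma_\alpha = \sum_{i=1}^{n} a_i^\alpha \,\delta(x_i^\alpha)$ (padding with zero coefficients if fewer than $n$ points are needed, and discarding $0$ from the support). Then I would try to extract, by a compactness/diagonal argument, limiting "points" $x_i \in \overline{M}$ and limiting coefficients $a_i$. The obstacle here is that $M$ need not be compact, so the points $x_i^\alpha$ may run off to infinity or fail to converge. The right way to handle this is \emph{not} to track the points directly but to test against Lipschitz functions: for any finite collection of disjoint open balls $B(y_1,r),\dots,B(y_{n+1},r)$ with pairwise disjoint "fattenings", one can build peak functions $h_j \in \Lip_0(M)$ each supported in $B(y_j, Cr)$ (with $0$ outside), and the point is that a finitely $n$-supported element cannot have nonzero pairing with $n+1$ such mutually "disjointly supported" peaks simultaneously in a robust way — more precisely, I would show: if $\gamma \in \F(M)$ pairs nontrivially (in the sense of \cite{APPP_2019}) with $n+1$ functions with pairwise disjoint supports that are moreover bounded away from each other, then $|\supp\gamma| \ge n+1$. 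Passing this obstruction to the weak limit is immediate, because pairing with a fixed $h \in \Lip_0(M) = \F(M)^*$ is weakly continuous.

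So the core argument runs: suppose $\gamma = \text{w-}\lim \gamma_\alpha$ but $|\supp\gamma| \ge n+1$. By the support characterization, pick $n+1$ distinct points $p_0,\dots,p_n \in \supp\gamma$ (including $0 = p_0$ if $0 \in \supp\gamma$, adjusting indices otherwise), choose $r>0$ small enough that the balls $B(p_j, 3r)$ are pairwise disjoint, and find $h_j \in \Lip_0(M)$ with $\supp h_j \subseteq B(p_j,3r)$ and $\langle h_j, \gamma\rangle \ne 0$ — this uses that $p_j \in \supp\gamma$ together with the local nature of support (one can always find a Lipschitz bump localized near $p_j$ that detects $\gamma$, via \cite[Theorem~2.1]{APPP_2019} applied to $K = M \setminus B(p_j, r)$). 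By weak convergence, $\langle h_j, \gamma_\alpha\rangle \to \langle h_j,\gamma\rangle \ne 0$, so for large $\alpha$ we have $\langle h_j, \gamma_\alpha\rangle \ne 0$ \emph{simultaneously for all $j = 0,\dots,n$}. But $\gamma_\alpha \in FS_n(M)$, and since the $h_j$ have pairwise disjoint supports each $\langle h_j, \gamma_\alpha\rangle$ depends only on the part of $\gamma_\alpha$ supported in $B(p_j, 3r)$; as $\supp\gamma_\alpha$ has at most $n$ points, it can meet at most $n$ of the $n+1$ disjoint balls, forcing $\langle h_j, \gamma_\alpha\rangle = 0$ for at least one $j$ — a contradiction. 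Hence $|\supp\gamma| \le n$, i.e. $\gamma \in FS_n(M)$, and $FS_n(M)$ is weakly closed.

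\smallskip

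\textbf{Main obstacle.} The delicate point is the interplay between finiteness of the support and locality of the detecting functionals — specifically, justifying that "$\langle h_j, \gamma_\alpha\rangle$ sees only the portion of $\gamma_\alpha$ in $B(p_j, 3r)$" when $\gamma_\alpha$ is finitely supported (this is a consequence of $h_j$ vanishing on a neighborhood of the complementary support, together with the fact that for finitely supported elements the pairing literally splits as a finite sum over support points), and then counting: $n$ points cannot hit $n+1$ genuinely separated balls. One must also be a little careful about whether to include $0$ among the $p_j$'s (the support always contains $0$ by convention for finitely supported elements, but for a general $\gamma$ one should check $0 \in \supp\gamma$ or handle it separately); this is a bookkeeping issue, not a substantive one. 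Everything else — weak continuity of $\langle h,\cdot\rangle$, existence of the bump functions $h_j$ via McShane–Whitney extension — is routine.
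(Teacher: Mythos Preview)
Your proposal is correct and follows essentially the same argument as the paper: assume for contradiction that the weak limit $\gamma$ has support of size at least $n+1$, pick $n+1$ disjoint balls around support points, use the support characterization (the paper cites \cite[Proposition~2.7]{APPP_2019}) to find localized bump functions detecting $\gamma$, pass to the net by weak continuity, and derive a pigeonhole contradiction since $\supp\gamma_\alpha$ has at most $n$ points. Your worries about handling $0$ and about ``locality of the pairing'' are unnecessary --- for a finitely supported $\gamma_\alpha$ the pairing $\langle h_j,\gamma_\alpha\rangle$ is literally a finite sum over $\supp\gamma_\alpha$, so if $\supp\gamma_\alpha \cap \supp h_j = \emptyset$ the pairing vanishes trivially --- but they do not affect correctness.
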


The proof uses the notion of support introduced above. More precisely, we will need the following characterization (see \cite[Proposition 2.7]{APPP_2019}): Let $M$ be a complete pointed metric space and $\gamma \in \F(M)$.  Then $x \in M$ lies in the support of $\gamma$ if and only if for every open neighbourhood $U_x$ of $x$ there exists a function $f\in \Lip_0(M)$ whose  support is contained in $U_x$ and such that $\langle f , \gamma \rangle \neq 0$.

\begin{proof}
Aiming for a contradiction, suppose $(\gamma_i)_i \subset FS_n(M)$ is a net which weakly converges to some $\gamma \not\in FS_n(M)$. This means that $\supp(\gamma)$ contains at least $n+1$ points $x_1,\ldots,x_{n+1}$. Let $\delta>0$ be small enough so that the balls $B(x_k,\delta)$, for $k=1,\ldots,n+1$, are pairwise disjoint. By \cite[Proposition 2.7]{APPP_2019}, there are $f_k\in\Lip_0(M)$ such that $\supp(f_k)\subset B(x_k,\delta)$ and $\langle f_k , \gamma \rangle \neq 0$. Therefore, if $i$ is large enough we must have $\langle f_k , \gamma_i \rangle \neq 0$ for every $k$, hence $\supp(\gamma_i)\cap B(x_k,\delta)\neq\emptyset$ for every $k$. This is impossible since $\supp(\gamma_i)$ only has $n$ elements.
\end{proof}

\subsection{Quasi-conjugacy } 

It is well known that, hypercyclicity and the notions of topological dynamics introduced in the introduction are preserved under quasi-conjugacy.

\begin{definition}
Let $f : M \rightarrow M$  and  $g: N \rightarrow N$ be two continuous maps
acting on metric spaces $M$ and $N$. The map $f$ is called quasi-conjugate to $g$ if there exists a continuous map $\phi:N\to M$ with dense range such that the diagram 
$$\xymatrix{
    N \ar[r]^g \ar[d]_{\phi}  & N \ar[d]^{\phi} \\
    M \ar[r]_{f} & M
  }$$
commutes, that is, $f\circ\phi=\phi\circ g$. In this case, we say that $\phi$ defines a quasi-conjugacy from $g$ to $f$.  
\end{definition}

\begin{proposition}
Let $(M,d_M)$ and  $(N,d_N)$ be two pointed metric spaces,  let $f : M \rightarrow M$, $g: N \rightarrow N$ be two Lipschitz maps such that $f(0)=0$ and $g(0)=0$, and let  $\phi:N\rightarrow M$ be a Lipschitz map such that $\phi(0)=0$. Then $\phi$ defines a
quasi-conjugacy from $g$ to $f$ if and only if $\widehat{\phi}:\Free(N)\rightarrow\Free(M)$ defines a quasi-conjugacy from $\widehat{g}$ to $\widehat{f}$.
\label{3}
\end{proposition}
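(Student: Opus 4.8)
The plan is to prove the two implications separately, using the functoriality of the hat construction (in particular $\widehat{f\circ g} = \widehat{f}\circ\widehat{g}$, which is immediate from Proposition~\ref{diagramfree} by uniqueness) together with Proposition~\ref{denserange}, which already settles the density of ranges. Throughout, the key commuting square to keep in mind is that $\widehat{\phi}\circ\delta_N = \delta_M\circ\phi$.

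First I would prove the forward direction. Assume $\phi$ defines a quasi-conjugacy from $g$ to $f$, i.e. $\phi$ is Lipschitz with dense range and $f\circ\phi = \phi\circ g$. By Proposition~\ref{denserange}, $\widehat{\phi}$ has dense range in $\F(M)$. It remains to check the intertwining identity $\widehat{f}\circ\widehat{\phi} = \widehat{\phi}\circ\widehat{g}$. Applying the hat functor to both sides of $f\circ\phi = \phi\circ g$ and using $\widehat{h_1\circ h_2} = \widehat{h_1}\circ\widehat{h_2}$ (valid since both sides are bounded linear operators $\F(N)\to\F(M)$ agreeing on $\delta_N(N)$, by the uniqueness part of Proposition~\ref{diagramfree}), we get $\widehat{f}\circ\widehat{\phi} = \widehat{f\circ\phi} = \widehat{\phi\circ g} = \widehat{\phi}\circ\widehat{g}$. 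Hence $\widehat{\phi}$ defines a quasi-conjugacy from $\widehat{g}$ to $\widehat{f}$.

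For the converse, assume $\widehat{\phi}$ defines a quasi-conjugacy from $\widehat{g}$ to $\widehat{f}$: it has dense range and $\widehat{f}\circ\widehat{\phi} = \widehat{\phi}\circ\widehat{g}$. By the ``only if'' part of Proposition~\ref{denserange}, the denseness of the range of $\widehat{\phi}$ forces $\phi(N)$ to be dense in $M$, so $\phi$ is a Lipschitz map with dense range. It remains to recover $f\circ\phi = \phi\circ g$ from the operator identity. Evaluate both sides of $\widehat{f}\circ\widehat{\phi} = \widehat{\phi}\circ\widehat{g}$ at $\delta_N(x)$ for an arbitrary $x\in N$: the left side gives $\widehat{f}(\widehat{\phi}(\delta_N(x))) = \widehat{f}(\delta_M(\phi(x))) = \delta_M(f(\phi(x)))$, while the right side gives $\widehat{\phi}(\widehat{g}(\delta_N(x))) = \widehat{\phi}(\delta_N(g(x))) = \delta_M(\phi(g(x)))$. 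Since $\delta_M$ is injective (it is an isometry), we conclude $f(\phi(x)) = \phi(g(x))$ for all $x\in N$, i.e. $f\circ\phi = \phi\circ g$, as desired.

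There is really no serious obstacle here; the statement is essentially the observation that $f\mapsto\widehat{f}$ is a faithful functor that preserves and reflects the relevant data. The only point requiring a line of care is the reflection step in the converse — deducing the pointwise identity $f\circ\phi=\phi\circ g$ from the linear-operator identity — which works precisely because the $\delta$ maps are injective and the operators are determined by (and determine) their values on evaluation functionals. Likewise one should note that the ``$\Leftarrow$'' direction of Proposition~\ref{denserange} is exactly what lets us extract density of $\phi(N)$ from density of $\mathrm{range}(\widehat{\phi})$, so no separate argument is needed there.
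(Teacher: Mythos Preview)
Your proof is correct and follows essentially the same approach as the paper: both use Proposition~\ref{denserange} for the density equivalence, and both recover $f\circ\phi=\phi\circ g$ from the operator identity by evaluating at $\delta_N(x)$ and using injectivity of $\delta_M$. The only cosmetic difference is that in the forward direction you invoke functoriality $\widehat{f\circ\phi}=\widehat{f}\circ\widehat{\phi}$ via the uniqueness clause of Proposition~\ref{diagramfree}, whereas the paper verifies the identity by direct computation on elements of $\mathrm{span}\,\delta(N)$; these are the same argument.
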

\begin{proof}
Thanks to Proposition \ref{denserange}, we have $\phi$ has a dense range if and only if $\widehat{\phi}$ has a dense range. So, it remains to show that $f\circ\phi=\phi\circ g$ is equivalent to $\widehat{f}\circ\widehat{\phi}=\widehat{\phi}\circ\widehat{g}$. Assume that $f\circ\phi=\phi\circ g$, let $\gamma=\sum a_i\delta_N(x_i)\in \lspan\,\delta(N)$, we have
\begin{align*}
    \widehat{\phi}\circ\widehat{g}(\gamma)&=\widehat{\phi}(\sum a_i\delta_N(g(x_i)))\\
    &=\sum a_i\delta_M(\phi\circ g(x_i))\\
    &=\sum a_i\delta_M(f\circ\phi(x_i))\\
    &=\widehat{f}\circ\widehat{\phi}(\sum a_i\delta_N(x_i))\\
    &=\widehat{f}\circ\widehat{\phi}(\gamma),
\end{align*}
so $\widehat{f}\circ\widehat{\phi}=\widehat{\phi}\circ\widehat{g}$ on $\lspan\,\delta(N)$, which implies that $\widehat{f}\circ\widehat{\phi}=\widehat{\phi}\circ\widehat{g}$ on $\Free(N)$. Conversely, suppose that $\widehat{f}\circ\widehat{\phi}=\widehat{\phi}\circ\widehat{g}$, let $x\in N$, we have $\widehat{f}\circ\widehat{\phi}(\delta_N(x))=\widehat{\phi}\circ\widehat{g}(\delta_N(x))$, which is equivalent to $\delta_M(\phi\circ g(x))=\delta_M(f\circ\phi(x))$, since $\delta_M$ is an isometry, we get $\phi\circ g(x)=f\circ\phi(x)$.  
\end{proof}

\section{Hypercyclicity Criterion for Lipschitz operators} \label{section_criteria}

Proving that a given operator is hypercyclic by constructing a hypercyclic vector is not an easy task, it is sometimes easier to check the topological transitivity condition. 
Nonetheless, in many concrete
situations it is not obvious how to verify the latter condition.
The purpose of \textit{the Hypercyclicity Criterion} is to provide several easily verified conditions under which an operator is hypercyclic (actually even weakly mixing). In this section, we will shift those conditions on the Lipschitz maps themselves, which will give us a very useful tool for particular examples.  
Let us start by recalling the statement of the Hypercyclicity Criterion \cite{Survey, GrPe}.
\medskip

\noindent \textbf{The Hypercyclicity Criterion (HC).} Let $X$ be a separable Banach space and let $T : X \to X$ be a bounded linear operator. We will say that $T$ satisfies the HC if there exists an increasing sequence of integers $(n_k)$, two dense sets $X_0$ 
 and $Y_0$ 
 in $X$, and a sequence of maps $S_{n_k} : Y_0 \to X$ such that 
\begin{enumerate}
    \item $T^{n_k}x \to 0$ for any $x \in X_0$;
    \item $S_{n_k}x \to 0$ for any $x \in Y_0$;
    \item $T^{n_k} S_{n_k}y \to y$ for each $y \in Y_0$.
\end{enumerate}
It is well known that if $T$ satisfies the HC then $T$ is hypercyclic (see \cite[Theorem~3.15]{GrPe} e.g.). Moreover, a bounded linear operator satisfies the HC if and only if it is weakly mixing (see \cite[Theorem 2.3]{BesPeris}), and if it satisfies the HC with respect to the full sequence $(n)_{n \in \N}$ then it is actually mixing (see page 32 in \cite{Survey}).
\medskip

The linearization property stated in Proposition~\ref{diagramfree} allows us to formulate a version of the HC for Lipschitz operators  $\widehat{f} : \F(M) \to \F(M)$, only involving metric conditions. 
\begin{theorem}
\label{HCriterion}
Let $(M,d)$ be a pointed separable metric space,  $f:M \to M$ be a Lipschitz map such that $f(0)=0$ and $\lambda\in\mathbb{R}\setminus\lbrace 0\rbrace$. Assume that there exist an increasing sequence of integers $(n_k)_{k\in\mathbb{N}}$, two dense subsets $\mathcal{D}_1$, $\mathcal{D}_2$ in $M$ and a sequence of maps $g_{n_k}:\mathcal{D}_2 \to M$ such that, for any $x\in\mathcal{D}_1$ and $y\in\mathcal{D}_2$ the following conditions hold: 
\begin{enumerate}
    \item $|\lambda|^{n_k}\,d(f^{n_k}(x),0)\underset{k\to+\infty}{\longrightarrow}0$;
    \item $\dfrac{d(g_{n_k}(y),0)}{|\lambda|^{n_k}}\underset{k\to+\infty}{\longrightarrow}0$;
    \item $d(f^{n_k}\circ g_{n_k}(y),y)\underset{k\to+\infty}{\longrightarrow}0$;
\end{enumerate}
Then $\lambda \widehat{f}$ satisfies the Hypercyclicity Criterion. In particular, $\lambda \widehat{f}$ is hypercyclic.
\end{theorem}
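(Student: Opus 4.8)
The plan is to verify that the metric conditions (1)–(3) on $f$, together with the linearization property of Proposition~\ref{diagramfree}, translate directly into the three conditions of the Hypercyclicity Criterion for the operator $T := \lambda\widehat{f}$ acting on the separable Banach space $X := \F(M)$. The natural candidates for the dense sets are $X_0 = Y_0 = \lspan\,\delta(\mathcal{D}_2)$ (or, if it is more convenient to keep them distinct, $X_0 = \lspan\,\delta(\mathcal{D}_1)$ and $Y_0 = \lspan\,\delta(\mathcal{D}_2)$); both are dense in $\F(M)$ because $\mathcal{D}_1,\mathcal{D}_2$ are dense in $M$, hence $\delta(\mathcal{D}_i)$ is dense in $\delta(M)$ by the isometry property of $\delta$, and $\overline{\lspan}\,\delta(M) = \F(M)$. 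For the maps $S_{n_k} : Y_0 \to X$ I would take the \emph{linear} extensions of $\lambda^{-n_k}\,\delta\circ g_{n_k}$, i.e.\ define $S_{n_k}$ on the generators by $S_{n_k}(\delta(y)) = \lambda^{-n_k}\,\delta(g_{n_k}(y))$ for $y\in\mathcal{D}_2$ and extend linearly to $\lspan\,\delta(\mathcal{D}_2)$.

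First I would record the basic identity $T^{n_k}\delta(x) = \lambda^{n_k}\widehat{f}^{n_k}\delta(x) = \lambda^{n_k}\,\delta(f^{n_k}(x))$, which comes from Lemma~\ref{LemmaOrbit}(1) and the commuting diagram of Proposition~\ref{diagramfree}. Then the three verifications go as follows. For condition (1): for $x\in\mathcal{D}_1$, $\|T^{n_k}\delta(x)\| = |\lambda|^{n_k}\|\delta(f^{n_k}(x))\| = |\lambda|^{n_k}\,d(f^{n_k}(x),0) \to 0$ by hypothesis (1), and by linearity this gives $T^{n_k}z\to 0$ for every $z$ in the dense set $\lspan\,\delta(\mathcal{D}_1)$. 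For condition (2): for $y\in\mathcal{D}_2$, $\|S_{n_k}\delta(y)\| = |\lambda|^{-n_k}\,d(g_{n_k}(y),0)\to 0$ by hypothesis (2), again extended by linearity. For condition (3): $T^{n_k}S_{n_k}\delta(y) = \lambda^{n_k}\widehat{f}^{n_k}\big(\lambda^{-n_k}\delta(g_{n_k}(y))\big) = \delta(f^{n_k}(g_{n_k}(y)))$, so $\|T^{n_k}S_{n_k}\delta(y) - \delta(y)\| = \|\delta(f^{n_k}\circ g_{n_k}(y)) - \delta(y)\| = d(f^{n_k}\circ g_{n_k}(y),y) \to 0$ by hypothesis (3); linearity extends this to all of $Y_0$. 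Finally I would invoke the standard fact (\cite[Theorem~3.15]{GrPe}) that the HC implies hypercyclicity.

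The one genuine point requiring care — and the place where the argument is slightly more than bookkeeping — is the passage ``by linearity'' in each of the three conditions. The HC asks for convergence at \emph{every} point of the dense sets $X_0$ and $Y_0$, not merely at the generators $\delta(x)$, $\delta(y)$; but since each of conditions (1), (2) is of the form ``$A_{n_k}v\to 0$ for $v$ a generator'' with $A_{n_k}$ linear, and a finite linear combination of null sequences is null, the extension is immediate on $\lspan\,\delta(\mathcal{D}_i)$. Likewise for (3), writing $z = \sum_i a_i\delta(y_i)$ one has $T^{n_k}S_{n_k}z - z = \sum_i a_i\big(\delta(f^{n_k}g_{n_k}(y_i)) - \delta(y_i)\big)$, a finite sum of terms each tending to $0$ in norm. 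One should also note that $S_{n_k}$ is well defined on $\lspan\,\delta(\mathcal{D}_2)$ because $\{\delta(y) : y\in\mathcal D_2\}$ can be taken as a linearly independent generating set (or simply because any linear map on a spanning set of a vector space that is consistently defined extends; here there is no consistency issue since we define $S_{n_k}$ freely on a basis). No completeness or continuity of $S_{n_k}$ is needed — the HC only requires $S_{n_k}$ to be maps $Y_0\to X$. The ``in particular'' clause is then just the cited implication HC $\Rightarrow$ hypercyclic, so no extra work is required there.
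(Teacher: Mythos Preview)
Your proposal is correct and follows essentially the same route as the paper's proof: take $X_0=\lspan\,\delta(\mathcal{D}_1)$, $Y_0=\lspan\,\delta(\mathcal{D}_2)$, define $S_{n_k}$ linearly by $S_{n_k}(\delta(y))=\lambda^{-n_k}\delta(g_{n_k}(y))$, and verify the three HC conditions on generators using the isometry $\|\delta(z)\|=d(z,0)$, then extend by linearity. Your additional remarks on the well-definedness of $S_{n_k}$ and the passage to finite linear combinations are sound (indeed $\{\delta(y):y\in M\setminus\{0\}\}$ is linearly independent in $\F(M)$), though the paper leaves these implicit.
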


\begin{proof}
Let $X_0=\mathrm{span}\,\delta(\mathcal{D}_1)$ and $Y_0=\mathrm{span}\,\delta(\mathcal{D}_2)$. It is clear that $X_0$ and $Y_0$ are dense in $\Free(M)$. Moreover, for every $x\in\mathcal{D}_1$, we have:
\[\|(\lambda\widehat{f})^{n_k}(\delta(x))\|=|\lambda|^{n_k}\|\delta(f^{n_k}(x))\|=|\lambda|^{n_k}\,d(f^{n_k}(x),0)\underset{k\to+\infty}{\longrightarrow}0.\]
Therefore, for every $x_0\in X_0$,  $\|(\lambda\widehat{f})^{n_k}(x_0)\|\underset{k\to+\infty}{\longrightarrow}0$. 

Let $S_{n_k}:Y_0 \to \Free(M)$ be the linear map given by $S_{n_k}(\delta(y))=\dfrac{1}{\lambda^{n_k}}\delta(g_{n_k}(y))$, for every $y\in\mathcal{D}_2$. We thus have \[\|S_{n_k}(\delta(y))\|=\dfrac{d(g_{n_k}(y),0)}{|\lambda|^{n_k}}\underset{k\to+\infty}{\longrightarrow}0\]
and
\[\|(\lambda\widehat{f})^{n_k}\circ S_{n_k}(\delta(y))-\delta(y)\|=\| \delta(f^{n_k}\circ g_{n_k}(y))-\delta(y)\|=d(f^{n_k}\circ g_{n_k}(y),y)\underset{k\to+\infty}{\longrightarrow}0.\]
Therefore, for every $y_0\in Y_0$, $S_{n_k} y_0\underset{k\to+\infty}{\longrightarrow}0$ and $(\lambda\widehat{f})^{n_k}\circ S_{n_k}y_0\underset{k\to+\infty}{\longrightarrow} y_0$. 
\end{proof}

Throughout this paper, we will always have $g_{n_k} = g^{n_k}$ for some function (always denoted by) $g$, and that we will refer to as the ``{\it inverse function}" of $f$ even though $f$ is not a bjiection.
\begin{definition}
We will say that $\widehat{f}$ satisfies the \textit{hypercyclic criterion for Lipschitz operators} (shortened HCL) if $f$ satisfies the conditions of Theorem~\ref{HCriterion} with $\lambda=1$.
\end{definition}

Here we present a setting where the HCL is satisfied.
\begin{proposition}  
Let $(M,d)$ be a complete pointed metric space, and $f:M\to M$ be a Lipschitz map such that $f(0)=0$. Assume that there exist an increasing sequence of integers $(n_k)_{k\in\N}$, a dense subset $D$ in $M$, a subset $J$ of $M$ with $0\in J$, and an integer $p\geq1$ such that:
\begin{enumerate}
    \item For every $x\in D$, $d(f^{pn_k}(x),0)\underset{k\to+\infty}{\longrightarrow}0$;
    \item $f^p_{|J}:J \to M$ is bijective and its inverse is a contraction; 
\end{enumerate}
then $\widehat{f}$ satisfies the HCL. In particular, $\widehat{f}$ is hypercyclic.
\end{proposition}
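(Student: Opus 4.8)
The plan is to deduce this proposition from Theorem~\ref{HCriterion} (with $\lambda = 1$), by exhibiting the sequence of integers, the two dense sets, and the ``inverse maps'' required by the Hypercyclicity Criterion for Lipschitz operators. The natural choice is to take $\mathcal{D}_1 = D$, to take $\mathcal{D}_2 = J$ (or rather its image under iterates of $f^p$, see below), and to build $g_{n_k}$ out of the inverse of $f^p_{|J}$.

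First I would set $h := (f^p_{|J})^{-1} : M \to J$, which by hypothesis (2) is well-defined on all of $M$ and is a contraction, say $\mathrm{Lip}(h) = c < 1$; note $h(0) = 0$ since $f^p(0) = 0$ and $0 \in J$. For condition (1) of Theorem~\ref{HCriterion} I would use the integers $(p n_k)_k$ and the dense set $\mathcal{D}_1 = D$: then $d(f^{p n_k}(x), 0) \to 0$ is exactly hypothesis~(1) of the proposition. For conditions (2) and (3), the candidate inverse maps are $g_{p n_k} := h^{n_k}$. Since $h$ is a $c$-contraction fixing $0$, we get $d(h^{n_k}(y), 0) \le c^{n_k} d(y,0) \to 0$, giving condition~(2) (here I am taking $\lambda = 1$, so the denominator $|\lambda|^{p n_k}$ is just $1$). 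For condition~(3), the point is that $f^p \circ h = \mathrm{id}_M$ by construction, hence $f^{p n_k} \circ h^{n_k} = (f^p)^{n_k} \circ (h)^{n_k} = \mathrm{id}_M$ on all of $M$, so $d(f^{p n_k} \circ g_{p n_k}(y), y) = 0$ for every $y$ and every $k$.

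There is one subtlety to address: Theorem~\ref{HCriterion} asks for two \emph{dense} subsets of $M$, and $J$ need not be dense. This is the main point requiring care. The fix is to replace $\mathcal{D}_2 = J$ by $\widetilde{J} := \bigcup_{n \ge 0} (f^p_{|J})^n(J)$, the forward orbit of $J$ under $f^p$; equivalently, one observes that since $f^p_{|J} : J \to M$ is onto, $f^p(J) = M$, so actually $M = f^p(J) \subseteq \overline{J}$... wait, more carefully: $f^p_{|J}$ being a bijection $J \to M$ gives $f^p(J) = M$, but this does not immediately say $J$ is dense. The clean way is to note that $h(M) = J$, so $J \supseteq h^n(M)$ for all $n$; and conversely every point of $M$ is $f^{pn} (h^n(y))$ for $y \in M$. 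Actually the simplest route: since we only need \emph{a} dense set on which the inverse maps are defined, and since $\mathcal{D}_1 = D$ already handles condition~(1), I can also take $\mathcal{D}_2 = D$ provided $g_{p n_k}$ is defined on $D$ — and $h^{n_k}$ is defined on all of $M$, in particular on $D$. So set $\mathcal{D}_2 := D$, $g_{p n_k} := h^{n_k}|_D$, and all three conditions of Theorem~\ref{HCriterion} hold for the sequence $(p n_k)_k$, with $\lambda = 1$. Thus $\widehat{f}$ satisfies the HCL, and in particular is hypercyclic by Theorem~\ref{HCriterion}.

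I expect the only genuine obstacle to be the bookkeeping around which dense sets to use and making sure the ``inverse'' $h = (f^p_{|J})^{-1}$ extends (as a contraction fixing $0$) to the domain on which we evaluate it — but this is immediate from hypothesis~(2) since $f^p_{|J}$ is stated to be a bijection onto all of $M$, so $h$ is automatically defined on $M \supseteq D$. Everything else is a direct substitution into Theorem~\ref{HCriterion}, using $f^{p n_k} \circ h^{n_k} = \mathrm{id}$ and $\mathrm{Lip}(h^{n_k}) \le c^{n_k} \to 0$.
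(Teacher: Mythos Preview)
Your proposal is correct and follows essentially the same approach as the paper: define $h=(f^p_{|J})^{-1}$, use the sequence $(pn_k)_k$, and verify the three conditions of Theorem~\ref{HCriterion} via the contraction property of $h$ (for condition~(2)) and the identity $f^{pn_k}\circ h^{n_k}=\mathrm{id}_M$ (for condition~(3)). The paper's proof is terser and implicitly takes $\mathcal{D}_2=M$ rather than $D$, but the argument is the same; your exploratory detour about whether $J$ is dense is unnecessary once you note that $h$ is globally defined on $M$.
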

\begin{proof}
For each $k$, let $g_k=(f_{|J}^{p})^{-n_k}:M \to J$. Since $(f^p_{|J})^{-1}(0)=0$ and $(f^p_{|J})^{-1}$ is a contraction, we get
$d((f^p_{|J})^{-n}(x),0)\underset{n\to+\infty}{\longrightarrow}0$, for each $x\in M$. In particular, for each $x\in M$, $d(g_k(x),0)\underset{k\to+\infty}{\longrightarrow}0$. Moreover, it is clear that the last condition of HCL is satisfied. Hence $\widehat{f}$ is hypercyclic. 
\end{proof}

We now explore the connections between the weakly mixing property and the HCL. This is of course  motivated by the linear case since a bounded operator $T : X \to X$ satisfies the HC if and only if it is weakly mixing \cite[Theorem 3.15]{GrPe}. Unfortunately, if $\widehat{f}$ satisfies the HCL then $f$ is not necessarily weakly mixing (not even transitive; see Example~\ref{ExampleHCnotHCL} and Example~\ref{ExampleHCLnotTransitive}).  Nevertheless the reverse implications holds, we omit the proof since it follows the same line as in \cite[Theorem~3.15]{GrPe}.

\begin{theorem} \label{Thm_WMimpliesHCL}
Let $(M,d)$ be a separable complete pointed metric space without isolated points, and let  $f:M\to M$ be a Lipschitz map such that $f(0)=0$. If $f$ is weakly mixing then $f$ satisfies the HCL.
\end{theorem}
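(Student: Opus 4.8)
The goal is to show that if $f : M \to M$ is weakly mixing (with $M$ separable, complete, no isolated points, $f(0)=0$), then $f$ satisfies the HCL, i.e. there exist an increasing sequence $(n_k)$, dense sets $\mathcal{D}_1, \mathcal{D}_2 \subseteq M$, and maps $g_{n_k} : \mathcal{D}_2 \to M$ satisfying conditions (1)--(3) of Theorem~\ref{HCriterion} with $\lambda = 1$. The strategy is to mimic the proof that a weakly mixing linear operator satisfies the Hypercyclicity Criterion (\cite[Theorem~3.15]{GrPe}), but working \emph{metrically} inside $M$ rather than linearly inside $\F(M)$. The essential tool is the Bès--Peris-type equivalence: weak mixing of $f$ gives, for any finite collection of nonempty open sets, a single iterate $n$ that simultaneously ``mixes'' all the required pairs. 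Concretely, since $f \times f \times \cdots \times f$ is transitive on $M^d$ for all $d$ (weak mixing implies transitivity of all finite powers, by the Furstenberg-type argument), one gets: for any nonempty open $U_1,\dots,U_r, V_1,\dots,V_r$, there are arbitrarily large $n$ with $f^n(U_i) \cap V_i \neq \emptyset$ for all $i$.

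First I would fix a countable dense set $\{z_j : j \in \N\} \subseteq M$ (possible since $M$ is separable) and take $\mathcal{D}_1 = \mathcal{D}_2 = \{z_j : j \in \N\}$, or rather build the data by a diagonal/exhaustion argument. The key construction: enumerate pairs $(j, \ell) \in \N^2$ and, proceeding inductively, use the finite-power transitivity of $f$ to choose an increasing sequence $(n_k)$ together with, for each $k$ and each relevant basepoint $z_j$, a point $w_{k,j} \in M$ close to $z_j$ whose image $f^{n_k}(w_{k,j})$ is close to $0$ (this handles condition (1), the ``$f^{n_k}(x) \to 0$'' part — here one uses that $0$ is not isolated so small balls around $0$ are genuinely open and nonempty of the right kind, and transitivity to push a neighborhood of $z_j$ into a small ball around $0$), and simultaneously a point $u_{k,j}$ in a small ball around $0$ whose image $f^{n_k}(u_{k,j})$ is close to $z_j$ (this gives the ``inverse map'' $g_{n_k}$: set $g_{n_k}(z_j) = u_{k,j}$, so $d(g_{n_k}(z_j), 0) \to 0$ is condition (2) and $d(f^{n_k}(g_{n_k}(z_j)), z_j) \to 0$ is condition (3)). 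The point of weak mixing (as opposed to mere transitivity) is precisely that one can arrange \emph{both} of these for \emph{all} $z_j$ with $j \le k$ using the \emph{same} iterate $n_k$: apply finite-power transitivity to the open sets $B(z_1,\ep_k), \dots, B(z_k,\ep_k)$ mapped toward $B(0,\ep_k)$, and $B(0,\ep_k)$ mapped toward each $B(z_j, \ep_k)$, all at once, with $\ep_k \to 0$. A mild subtlety: $\mathcal{D}_1$ needs to be a fixed dense set on which condition (1) holds \emph{pointwise}; the standard fix is to replace $\mathcal{D}_1$ by the set of all the $w_{k,j}$'s (which is dense, as every $z_j$ is a limit of $w_{k,j}$ as $k \to \infty$) — or, more cleanly, to observe that in the HCL one is free to choose $\mathcal{D}_1$ after the $(n_k)$, so one takes $\mathcal{D}_1 = \bigcup_k \{w_{k,j} : j \le k\}$ and notes each such point satisfies (1) by construction since only finitely many tail conditions matter.

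The main obstacle I expect is the \emph{bookkeeping to make $\mathcal{D}_1$, $\mathcal{D}_2$, and the $g_{n_k}$ genuinely well-defined and dense simultaneously}, rather than any deep idea: one must be careful that the same increasing sequence $(n_k)$ works for the dense-set conditions of \emph{both} $\mathcal{D}_1$ and $\mathcal{D}_2$, and that $g_{n_k}$ is defined on all of $\mathcal{D}_2$ (not just on the ``active'' points at stage $k$) — one extends $g_{n_k}$ to the finitely-many not-yet-active points arbitrarily, say by sending them to $0$, since for a fixed $z_j$ only the tail $k \ge j$ of the sequence is constrained, and that suffices for (2) and (3) which are asymptotic statements. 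The role of completeness is essentially to guarantee (via the earlier remark that $\F(M) \equiv \F(\overline{M})$) that there is no loss in assuming $M$ complete, and ``no isolated points'' is used to ensure all the balls $B(z_j,\ep)$ and $B(0,\ep)$ are nonempty open sets to which transitivity genuinely applies; in particular $0$ non-isolated is what makes the ``$\to 0$'' targets legitimate. Since the excerpt explicitly says the proof ``follows the same line as in \cite[Theorem~3.15]{GrPe}'', I would present the above as a sketch and refer the reader there for the verbatim diagonalization details.

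Actually, given the paper's own phrasing (``we omit the proof since it follows the same line as in \cite[Theorem~3.15]{GrPe}''), the most honest proposal is: transport the linear argument through the isometry $\delta$. A linear operator $T$ on a separable Banach space $X$ is weakly mixing iff for every nonempty open $U,V$ and every nonempty open $W \ni 0$ there is $n$ with $T^n(U) \cap V \neq \emptyset$ and $T^n(W) \cap W \neq \emptyset$ (the ``zero-one'' form of the criterion). Applying this heuristic at the level of $M$: weak mixing of $f$ yields, for all such data in $M$, an iterate $n$ with $f^n(U) \cap V \neq \emptyset$ and, crucially, $f^n$ carrying a neighborhood of $0$ back near any prescribed point and vice versa — and then the three HCL conditions are read off exactly as in the linear proof, with norms replaced by $d(\cdot, 0)$. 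I would therefore write the proof as: (i) recall finite-power transitivity of weakly mixing $f$; (ii) fix a countable dense $\{z_j\}$ and run the inductive selection producing $(n_k)$, the points $w_{k,j}, u_{k,j}$, and hence $\mathcal{D}_1, \mathcal{D}_2, g_{n_k}$; (iii) verify (1)--(3) directly from the closeness estimates. The only genuinely delicate verification is that the same $n_k$ serves both the ``forward'' ($f^{n_k}(x) \to 0$) and ``backward'' ($f^{n_k} g_{n_k}(y) \to y$ with $g_{n_k}(y) \to 0$) requirements for all indices $\le k$ at once — and that is exactly what weak mixing, via transitivity of $f^{(k)} = f \times \cdots \times f$ on $M^k$, delivers.
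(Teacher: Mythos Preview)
Your overall strategy is correct and matches what the paper intends by deferring to \cite[Theorem~3.15]{GrPe}: exploit that weak mixing of $f$ gives transitivity of every finite self-product $f \times \cdots \times f$, fix a countable dense set $\{z_j\}$, and inductively choose $(n_k)$ together with witnesses for conditions (1)--(3). Your treatment of $\mathcal{D}_2$ and the maps $g_{n_k}$ is fine, since $g_{n_k}(z_j) = u_{k,j}$ is allowed to vary with $k$ and only tail estimates are needed.

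There is, however, a genuine gap in your construction of $\mathcal{D}_1$. You set $\mathcal{D}_1 = \bigcup_k \{w_{k,j} : j \le k\}$ and assert that ``each such point satisfies (1) by construction since only finitely many tail conditions matter.'' But a fixed element $x = w_{k_0,j_0}$ of this set only satisfies $d(f^{n_{k_0}}(x),0) < \ep_{k_0}$ at the \emph{single} stage $k = k_0$; condition (1) requires $d(f^{n_k}(x),0) \to 0$ along the \emph{whole} sequence $(n_k)$, and your construction gives no control whatsoever for $k \neq k_0$. The fix --- and this is what actually happens in the proof of \cite[Theorem~3.15]{GrPe} --- is a nested-open-sets argument: at stage $k$, instead of selecting points, select nonempty open sets $W^{(k)}_j \subseteq W^{(k-1)}_j$ (for $j < k$) and $W^{(k)}_k \subseteq B(z_k, 1/k)$, each of diameter $< 1/k$, such that $f^{n_k}(W^{(k)}_j) \subseteq B(0, 1/k)$ for all $j \le k$. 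Weak mixing supplies $n_k > n_{k-1}$ with $f^{n_k}(W^{(k-1)}_j) \cap B(0,1/k) \neq \emptyset$ for every relevant $j$ simultaneously, and continuity of $f^{n_k}$ then lets you pass to a sub-ball mapped entirely into $B(0,1/k)$. Completeness of $M$ yields a point $x_j \in \bigcap_{k \ge j} \overline{W^{(k)}_j}$, and now $d(f^{n_k}(x_j),0) < 1/k$ for every $k \ge j$, so (1) holds on $\mathcal{D}_1 := \{x_j : j \in \N\}$, which is dense because $d(x_j, z_j) \le 1/j$. This nested construction is precisely where the completeness hypothesis and the continuity of $f$ are genuinely used, not merely to identify $\F(M)$ with $\F(\overline{M})$ as you suggest.
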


In  \cite{DR}, M. De La Rosa and C. Read gave the first example of hypercyclic operator which  does not satisfy the Hypercyclicity Criterion. In the same direction, we do not know whether there is a Lipschitz operator $\widehat{f}$ which is hypercyclic but fails the Hypercyclicity Criterion. On the other hand, there are Lipschitz operators $\widehat{f}$ satisfying the HC but not the HCL.

\begin{example} \label{ExampleHCnotHCL}
Let $M$ be the compact space $\{0\} \cup \{ \frac{1}{n} \; : \; n \in \N\}$ equipped with the usual distance in $\R$. Let $f$ be the Lipschitz map defined by $f(0)=0$, $f(1)=1$ and $f(\frac{1}{n}) = \frac{1}{n-1}$ for $n \geq 2$.  We claim that $f$ does not satisfy the HCL but satisfies the usual HC.

To simplify the notation, we will write $x_n$ instead of $\frac{1}{n}$.
First, it is readily seen that for every $k \neq 0$, we have that $\lim\limits_{n \to \infty} d(f^n(x_k),0)=1 \neq 0$ and so $f$ does not satisfy the HCL.

Next, it is well known that $\F(M) \equiv \ell_1$. Indeed, the linear operator $\Phi : \ell_1 \to \F(M)$ given by 
$$ \Phi(e_n) = \frac{\delta(x_n) - \delta(x_{n+1})}{d(x_n,x_{n+1})} $$
is a surjective isometry. So $\widehat{f}$ is conjugate to an operator $T :=\Phi^{-1} \circ \widehat{f} \circ \Phi  : \ell_1 \to \ell_1$ so that $T(e_1) = 0$ and  for $n\geq 2$:
\begin{eqnarray*}
T(e_n) &=& \Phi^{-1} \widehat{f} \left(\frac{\delta(x_n) - \delta(x_{n+1})}{d(x_n,x_{n+1})}\right) \\
&=& \Phi^{-1} \left(\frac{\delta(x_{n-1}) - \delta(x_{n})}{d(x_n,x_{n+1})}\right) \\
&=& \frac{d(x_{n-1},x_{n})}{d(x_n,x_{n+1})}\Phi^{-1} \left(\frac{\delta(x_{n-1}) - \delta(x_{n})}{d(x_{n-1},x_{n})}\right) \\
&=& \frac{n+1}{n-1} e_{n-1}.
\end{eqnarray*}
Thus $T$ is a weighted backward shift on $\ell_1$ which is well known to satisfy the HC, so does $\widehat{f}$.
\end{example}

\begin{remark}
Notice that if we consider $1$ to be the base point of $M$ (instead of 0), then $f$ satisfy the HCL. Therefore, when $f$ as multiples fixed points, the choice of the base point matters for studying the HCL. This is clearly in opposition with the other dynamical properties considered in this paper. Nevertheless, the assertion ``there is a choice of the base point such that $f$ satisfies the HCL whenever $\widehat{f}$ satisfies the HC" is false. Indeed, it suffices to only modify the value of $f(1)$ by setting for instance $f(1) = \frac{1}{2}$ to disprove the latter statement ($1$ is not an acceptable base point anymore since it is not a fixed point for the map $f$). 
\end{remark}

\subsection{Application}

Let us first prove the following characterisation of hypercyclicity for ``backward shift Lipschitz operators" defined on (the Lipschitz-free space over) countably branching tree of height one.
\begin{proposition} \label{hyperleftshift}
Let $M= \N \cup \{0\}$ be a pointed metric space and let  $f:M \to  M$ be the map defined by $f(0)=0$ and $f(n)= n-1$ whenever $n \geq 1$. Assume that $M$ is endowed with a metric $d$ such that $f$ is Lipschitz and $d(n,m)=d(n,0)+d(0,m)$ whenever $n \neq m \in \N$.
\begin{enumerate}
\item Then the following conditions are equivalent:
\begin{enumerate}
    \item $\underset{n\to+\infty}{\liminf}\, d(n,0)=0$.
    \item $\widehat{f}$ is hypercyclic. 
\end{enumerate}
\item We also have equivalence between the two stronger conditions:
\begin{enumerate}
    \item $\lim\limits_{n \to \infty} d(n,0) =0$.
    \item $\widehat{f}$ is mixing. 
\end{enumerate}
\end{enumerate}
\end{proposition}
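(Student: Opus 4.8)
The plan is to work with the concrete realization from Proposition~\ref{PropTreesl1}: since $d(n,m) = d(n,0) + d(0,m)$ for $n \neq m$, the map $\Phi : \F(M) \to \ell_1(\N)$ with $\Phi(\delta(n)) = d_n e_n$ (writing $d_n := d(n,0)$) is an onto linear isometry, and $\widehat{f}$ is conjugate to the weighted backward shift $T$ on $\ell_1$ given by $Te_n = d_{n-1}d_n^{-1}e_{n-1}$ for $n \geq 2$ and $Te_1 = 0$. Under the product $w_n := d_{n-1}d_n^{-1}$, the iterate satisfies $T^n e_{n+j} = (d_j / d_{n+j})\, e_j$ for $j \geq 1$ (the weights telescope), so controlling orbits reduces to controlling the ratios $d_j/d_{n+j}$. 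Hypercyclicity and mixing are invariant under conjugacy, so it suffices to prove the statements for $T$ on $\ell_1$.

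First I would prove part (2). For the implication $(a)\Rightarrow(b)$: assume $d_n \to 0$. I would check the Hypercyclicity Criterion for $T$ with respect to the full sequence $(n)_{n\in\N}$, which yields mixing. Take $X_0 = Y_0 = \mathrm{span}\{e_j : j \geq 1\}$, which is dense in $\ell_1$. On generators, $T^n e_j = 0$ for $n \geq j$, so condition (1) holds trivially. Define $S_n : Y_0 \to \ell_1$ as the (weighted) forward shift by $n$ steps, $S_n e_j = (d_j/d_{n+j})\, e_{n+j}$; then $T^n S_n e_j = e_j$ exactly, giving condition (3), and $\|S_n e_j\| = d_j / d_{n+j} \cdot \|e_{n+j}\|_1$ — but wait, in $\ell_1$ we have $\|e_{n+j}\| = 1$, so $\|S_n e_j\| = d_j/d_{n+j}$. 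This need \emph{not} tend to $0$ in general, so I would instead phrase things directly on the metric side via the HCL of Theorem~\ref{HCriterion} with $\lambda = 1$: take $\mathcal D_1 = \mathcal D_2 = \N$, $g_n(m) = m+n$ (i.e.\ $g = $ the forward map $k \mapsto k+1$, so $g^n(m) = m+n$). Then condition (1) of the HCL, $d(f^n(m),0) \to 0$, holds since $f^n(m) = 0$ for $n \geq m$; condition (3), $d(f^n g^n(m), m) = d(m,m) = 0$; and condition (2), $d(g^n(m),0) = d_{m+n} \to 0$ precisely because $d_n \to 0$. Hence $\widehat f$ is mixing. For $(b)\Rightarrow(a)$: if $d_n \not\to 0$ there is $\varepsilon > 0$ and a subsequence with $d_{n_k} \geq \varepsilon$; using $\|\widehat f^n \delta(m)\| = d(f^n(m),0) = d_{m-n}$ for $n < m$ (and $0$ for $n \geq m$), I would show that the orbit of any $\delta(m)$ stays confined in a way incompatible with mixing — more carefully, test against the functional "$\mathrm{supp}$ in a neighbourhood of the point realizing $d_{n_k}$" to see that $B(n_k, \varepsilon/2)$ is eventually missed by $f^n(U)$ for suitable $U$, contradicting mixing. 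I expect this negative direction to be the part requiring the most care.

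Then I would prove part (1). The implication $(b)\Rightarrow(a)$ follows by the same confinement argument: if $\liminf d_n > 0$, say $d_n \geq \varepsilon$ for all large $n$, then every orbit point $\widehat f^n(\gamma)$ for a finitely supported $\gamma$ is supported on finitely many indices, and once the support has "fallen off the end" the norm behaviour is too rigid — more precisely, I would observe that $\widehat f^n(\delta(m)) \in \{0\} \cup \delta(\N)$ with $\|\widehat f^n(\delta(m))\| = d_{m-n} \geq \varepsilon$ until it hits $0$, so no vector whose support is bounded can have dense orbit; one then upgrades to arbitrary $\gamma$ by approximating with finitely supported vectors and using $\|\widehat f\| = \mathrm{Lip}(f) < \infty$ together with the fact that $\widehat f$ maps the "tail" $\overline{\mathrm{span}}\{\delta(n) : n \geq N\}$ into itself up to the first $N$ coordinates — here the explicit $\ell_1$ picture ($Te_{n+j}$ has a clean expression) makes the estimate transparent. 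For $(a)\Rightarrow(b)$: assume $\liminf d_n = 0$, choose a subsequence $n_k$ with $d_{n_k} \to 0$, and apply Theorem~\ref{HCriterion} with $\lambda = 1$ along this subsequence: set $\mathcal D_1 = \mathcal D_2 = \N$ and $g_{n_k}(m) = m + n_k$ (not a fixed power of one map, but that is allowed in Theorem~\ref{HCriterion}, only the HCL definition insists on $g_{n_k} = g^{n_k}$); conditions (1) and (3) hold exactly as before, and condition (2) is $d(g_{n_k}(m),0) = d_{m+n_k}$, which tends to $0$ since $n_k \to \infty$ and $d_{n_k} \to 0$ — strictly I need $d_{m+n_k} \to 0$, so I would pass to a further subsequence, or note that by passing to the subsequence realizing the $\liminf$ and using the freedom to relabel, $d_{m+n_k} \to 0$ holds for each fixed $m$ after a diagonal argument. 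Thus $\lambda\widehat f = \widehat f$ satisfies the HC, hence is hypercyclic.

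The main obstacle I anticipate is the negative directions $(b)\Rightarrow(a)$ in both parts: translating "$\liminf d_n > 0$" (resp.\ "$d_n \not\to 0$") into an obstruction to hypercyclicity (resp.\ mixing) requires a genuine argument about how $\widehat f$ acts on infinitely supported vectors, not just on the $\delta(n)$. The clean way around this is to exploit the $\ell_1$-conjugacy: on $\ell_1$, $T$ is a weighted backward shift with weights $w_n = d_{n-1}/d_n$, and there is a standard criterion — a weighted backward shift on $\ell_1$ is hypercyclic iff $\sup_n \prod_{k=1}^n w_k = +\infty$ and mixing iff $\prod_{k=1}^n w_k \to +\infty$ (see \cite{GrPe}); since $\prod_{k=2}^{n+1} w_k = d_1/d_{n+1}$, these conditions read exactly $\sup_n 1/d_n = +\infty$ (i.e.\ $\liminf d_n = 0$) and $1/d_n \to +\infty$ (i.e.\ $d_n \to 0$), closing both directions at once. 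I would present the proof through this weighted-shift dictionary, using Proposition~\ref{PropTreesl1} to set it up.
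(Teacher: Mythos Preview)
Your final approach---conjugating $\widehat{f}$ via Proposition~\ref{PropTreesl1} to the weighted backward shift $T e_n = (d_{n-1}/d_n) e_{n-1}$ on $\ell_1$, telescoping the product of weights to $d_1/d_n$, and invoking the standard hypercyclicity/mixing criteria for weighted shifts---is exactly what the paper does, and it closes both equivalences in both directions at once.

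One caution about the exploratory HCL route you sketched for (1)(a)$\Rightarrow$(b): the diagonal argument does \emph{not} repair the gap. From $\liminf d_n = 0$ you cannot in general extract a single subsequence $(n_k)$ with $d_{m+n_k} \to 0$ for every fixed $m$; for instance, if $d_n = 1$ except $d_{k^2} = 1/k$, then along $n_k = k^2$ one has $d_{1+n_k} = 1$ for all large $k$, and no subsequence of $(k^2)$ helps. So the positive direction in part (1), not only the negative ones, genuinely requires the weighted-shift dictionary (or an equivalent device). It is good that you pivoted.
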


\begin{proof}

According to Proposition~\ref{PropTreesl1}, $$\psi: \delta(n) \in \F(M) \mapsto d(n,0)e_n \in \ell_1$$
induces an onto linear  isometry between $\Free(M)$ and $\ell_1$.  Hence $T=\psi\circ\widehat{f}\circ\psi^{-1} : \ell_1 \to \ell_1$ is quasi-conjugate to $\widehat{f}$ while $Te_n=\frac{d(n-1,0)}{d(n,0)}e_{n-1}$ for $n\geqslant2$ and $Te_1=0$. Thus $T$ is a unilateral weighted backward shift acting on $\ell_1$. According to  \cite[Theorem 1.40]{Survey} (see also Remark 1.41 therein), we can deduce that
\begin{align*}
    \widehat{f} \text{ is hypercyclic } &\iff  T \text{ is hypercyclic } \\
    &\iff \underset{n\to+\infty}{\limsup}\,\dfrac{d(1,0)}{d(2,0)}\times\dfrac{d(2,0)}{d(3,0)}\times\ldots\times\dfrac{d(n-1,0)}{d(n,0)}=+\infty \\
    &\iff  \underset{n\to+\infty}{\liminf}\, d(n,0)=0,
\end{align*}
which proves $(1)$. The proof of assertion $(2)$ is similar (replacing $\limsup$ by $\lim$) and based on the  characterisation of mixing weighted backward shift acting on $\ell_1$; see \cite[Theorem~4.8 and Example~4.9~(a)]{GrPe}.
\end{proof}

As an easy consequence we provide a non-hypercyclic Lipschitz map $f : M \to M$ which satisfies the HCL. Thus, the HCL does not necessarily implies in general that $f$ itself is hypercyclic.  

\begin{example} \label{ExampleHCLnotTransitive}
Let $M$ and $f :M \to M$ be as in the previous proposition with $d(0,n)=\frac{1}{2^n}$ for every $n \in \N$. It is clear that the orbit of any point in $M$ under $f$ is finite, therefore it cannot be dense. Nonetheless, the sequence $(d(n,0))_n$ is decreasing to 0 so that $\widehat{f}$ is mixing.
\end{example}

As another application of Theorem~\ref{HCriterion}, we can state a modified version of \cite[Theorem 3]{DuMoZa_2013}. It corrects the former statement which does not hold in general, as we will see in  Example~$\ref{Counter-example}$. Note that the case when $\widehat{f}$ is the backward shift on (a copy of) $\ell_1$ is included in the following proposition (see again Example $\ref{Counter-example}$).

\begin{proposition}  \label{PropBackShift}
Let $(M,d)$ be a pointed metric space, and let $f:M \to M$ be a Lipschitz map such that $f(0)=0$. Let $(M_n)_{n\geq0}$ be a partition of $M$ such that $M_0= \left\lbrace 0 \right\rbrace$, and for every  $n\geq 1$, $f_{|M_{n+1}}$ is injective, and  $$f(M_{n+1}) = M_n.$$
Let $\lambda\in \mathbb{R}$ be such that $\dfrac{d^+_n}{\lambda^n} \underset{n\to \infty}{\rightarrow} 0$, where $d^+_n = \sup_{x\in M_n} d(x,0)$. Then $\lambda \widehat{f}$ is mixing.
\end{proposition}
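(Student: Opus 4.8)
The plan is to verify the hypotheses of Theorem~\ref{HCriterion} directly, with the sequence $(n_k)$ taken to be the full sequence $(n)_{n\in\N}$ so that we obtain mixing rather than merely weak mixing. The natural candidates for the two dense sets and the inverse maps are dictated by the partition structure: set $\mathcal D_1 = \mathcal D_2 = M$, and on each $M_n$ use the inverse branch of $f$ coming from the injectivity and surjectivity hypotheses.

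\begin{proof}
We check that the conditions of Theorem~\ref{HCriterion} hold with $(n_k)_{k} = (k)_{k\in\N}$, $\mathcal D_1 = \mathcal D_2 = M$ and suitable maps $g_k$. Since $M_0 = \{0\}$ and $f(M_{n+1}) = M_n$ with $f\restricted_{M_{n+1}}$ injective for every $n\ge 1$, for each $n\ge 1$ the restriction $f\restricted_{M_n} : M_n \to M_{n-1}$ is a bijection; denote by $\sigma_n : M_{n-1}\to M_n$ its inverse (with the convention $\sigma_1 : \{0\}\to M_1$ sending $0$ to $0$, which is legitimate since $0\in M_1$ would force a fixed point; more carefully, $f(M_1)=M_0=\{0\}$ only tells us $f(M_1)\subseteq\{0\}$, and since $f(0)=0$ and $0\in M_0$, the point $0$ is not in $M_1$, so $\sigma_1$ is the genuine inverse of the bijection $f\restricted_{M_1}:M_1\to\{0\}$, i.e. $M_1$ is a single point mapped to $0$). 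Given $x\in M$, say $x\in M_j$, define
\[
g_k(x) := (\sigma_{j+k}\circ\sigma_{j+k-1}\circ\cdots\circ\sigma_{j+1})(x)\in M_{j+k},
\]
so that $g_k(x)$ is the unique preimage of $x$ under $f^k$ lying in $M_{j+k}$, and $f^k\circ g_k = \mathrm{id}_M$. In particular condition $(3)$ of Theorem~\ref{HCriterion} holds exactly: $d(f^k\circ g_k(x),x) = 0$ for all $k$.

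It remains to establish conditions $(1)$ and $(2)$ with $\lambda$ as in the statement. For condition $(1)$: if $x\in M_j$ then $f^k(x)\in M_{j-k}$ once $k\ge j$, and in fact $f^k(x)\in M_0=\{0\}$ for $k\ge j$, hence $d(f^k(x),0)=0$ for all large $k$, and so $|\lambda|^k d(f^k(x),0)\to 0$ trivially. (Note $\lambda\neq 0$: if $\lambda=0$ then $\lambda\widehat f=0$ is not mixing, so the hypothesis $d_n^+/\lambda^n\to 0$ should be read as forcing $\lambda\neq 0$; indeed if some $M_n$ with $n\ge 1$ is nonempty then $d_n^+>0$ for that $n$, and $d_n^+/\lambda^n$ only makes sense and tends to $0$ when $|\lambda|>1$ — one may restrict to that case, the degenerate case $M=\{0\}$ being vacuous.) For condition $(2)$: if $x\in M_j$ then $g_k(x)\in M_{j+k}$, so
\[
\frac{d(g_k(x),0)}{|\lambda|^k} \le \frac{d_{j+k}^+}{|\lambda|^k} = |\lambda|^{j}\cdot\frac{d_{j+k}^+}{|\lambda|^{j+k}} \xrightarrow[k\to\infty]{} 0,
\]
using the hypothesis $d_n^+/|\lambda|^n\to 0$ (with $j$ fixed). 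Thus all three conditions of Theorem~\ref{HCriterion} hold for the full sequence $(k)_{k\in\N}$, so $\lambda\widehat f$ satisfies the Hypercyclicity Criterion with respect to $(k)_{k\in\N}$, and is therefore mixing.
\end{proof}

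The only genuinely delicate point is bookkeeping around the base point: one must observe that $0\notin M_n$ for $n\ge 1$ (since the $M_n$ partition $M$ and $0\in M_0$), so that the inverse branches $\sigma_n$ are well defined as honest inverses of bijections and $g_k$ lands in $M_{j+k}$ as claimed; everything else is the routine translation of the weighted-backward-shift mixing criterion through the isometry furnished by Theorem~\ref{HCriterion}. I also flag that the clean statement requires $|\lambda|>1$ whenever $M\neq\{0\}$, which is automatic from the hypothesis $d_n^+/\lambda^n\to 0$ once any $M_n$ ($n\ge1$) is nonempty.
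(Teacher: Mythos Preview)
Your proof is correct and follows exactly the same route as the paper's: apply Theorem~\ref{HCriterion} with the full sequence $(n_k)=(k)$, $\mathcal D_1=\mathcal D_2=M$, and right-inverse maps built from the bijections $f\restricted_{M_{n+1}}:M_{n+1}\to M_n$, then check the three conditions in the obvious way. The paper simply defines a single map $g$ with $g(0)=0$ and $g(y)\in M_{n+1}$ the unique $f$-preimage of $y\in M_n$ for $n\ge 1$, then takes $g_k=g^k$; your composition of the $\sigma$'s is the same thing.

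One small point: your parenthetical about $\sigma_1$ is muddled and unnecessary. The hypothesis does \emph{not} say $f(M_1)=M_0$ nor that $f\restricted_{M_1}$ is injective (it only covers $f\restricted_{M_{n+1}}$ for $n\ge 1$), so your conclusion that ``$M_1$ is a single point'' is unwarranted. Fortunately you never need $\sigma_1$: for $x\in M_j$ with $j\ge 1$ the definition of $g_k(x)$ only uses $\sigma_{j+1},\ldots,\sigma_{j+k}$, all of which are covered by the hypothesis, and for $x=0$ one simply sets $g_k(0)=0$. (Both you and the paper implicitly assume $f(M_1)\subseteq M_0$ in verifying condition~(1); this is a harmless reading of the intended hypothesis.)
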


\begin{proof}
We apply Theorem $\ref{HCriterion}$ with $\mathcal{D}_1 = \mathcal{D}_2 = M$, $n_k = k$ and $g_k = g^k$ where $g : M \to M$ is the map defined as follow: $g(0) = 0$, and for every $n\geq 1$ and $y\in M_n$, $g(y) = x$ where $x$ is the unique element of $M_{n+1}$ such that $f(x) = y$.\\
Let us check the three conditions of Theorem $\ref{HCriterion}$:
\begin{enumerate}
    \item For every $x\in M$, $f^n(x) = 0$ when $n$ is large enough and hence the condition $(1)$ of Theorem $\ref{HCriterion}$ is satisfied.
    \item Let $k \in \mathbb{N}$ and $y\in M_k$. If $k=0$ and $y=0$ so that $g_n(y) = 0$. If $k\geq 1$ then $g_n(y) \in M_{k + n}$ and we have
    $$ \dfrac{d(g_n(y),0)}{|\lambda|^n} = |\lambda|^k \dfrac{d(g_n(y),0)}{|\lambda|^{k+n}} \leq |\lambda|^k \dfrac{d^+_{k + n}}{|\lambda|^{k+n}} \underset{n \to \infty}{\rightarrow} 0.$$
    \item For every $y\in M$, $f^n \circ g_n(y) = y$ and hence $d(f^n\circ g_n(y),y)=0$.
\end{enumerate}
Thanks to Theorem~\ref{HCriterion}, we conclude that $\lambda \widehat{f}$ satisfies the Hypercyclicity Criterion with respect to the full sequence, so it is mixing.
\end{proof}

We give a counter-example to Proposition~\ref{PropBackShift} in the case when $\frac{d^+_n}{\lambda^n}$ does not converge to $0$. Moreover, this provides an example of a Lipschitz operator $\widehat{f}$ which is supercyclic but not hypercyclic.
\begin{example}\label{Counter-example}
Once more, let $M$ and $f : M \to M$ be as in Proposition~\ref{hyperleftshift} with $d(0,n) = n!$ for every $n \in \N$.
Recall that $\Phi : \delta(x_n) \in \F(M) \mapsto n! \, e_n \in \ell_1$ defines an isometry and $\widehat{f} : \Free(M) \to \Free(M)$ is conjugate to $T : \ell_1 \to \ell_1$ given by
$$T(e_{n+1}) = \dfrac{e_n}{n+1}.$$  
If $\lambda \in \mathbb{R}$ then notice that $\lambda T$ is compact and therefore not hypercyclic. Therefore $\lambda \widehat{f}$ is not hypercyclic as well,  while
$$\left|\dfrac{d^+_n}{\lambda^n} \right| = \dfrac{n!}{|\lambda|^n}\to +\infty.$$  
In order to check that $\widehat{f}$ is supercyclic, it is easy to see that the conditions of Theorem~\ref{LipSCC} below are satisfied.
\end{example}

Next there is a Lipschitz map $f : M \to M$ such that $f$ has an orbit which is dense in $M$ while $\widehat{f}$ is cyclic but it is not supercyclic.
\begin{example} \label{ExamplefTTfhatNOTsuper}
Let $M=\N \cup \{0\}$ be as in Proposition~ \ref{hyperleftshift}, where $(d_n)_n:=(d(0,n))_n$ is decreasing and tending to zero. This time we let $f : M \to M$ be the $1$-Lipschitz map defined by $f(0)=0$ and $f(n)=n+1$ otherwise. Then the orbit of $1$ under $f$ is dense in $M$, and since $\|\widehat{f}\|=\mathrm{Lip}(f) \leqslant1$, we get that $\widehat{f}$ is not hypercyclic. Let us show that $\widehat{f}$ is not even supercyclic. As usual $\widehat{f}$ is conjugate to an bounded operator 
$T : \ell_1 \to \ell_1$ given by $T(e_{n}) = \dfrac{d_{n+1}}{d_n} e_{n+1}$. It is clear that $e_1$ is a cyclic vector for $T$, but $T$ is not supercyclic because it does not have a dense range.
\end{example}

Finally, the following example shows that there is a pointed metric space $M$ and a Lipschitz map $f : M \to M$ such that both $f$ and $\widehat{f}$ are hypercyclic.
\begin{example} 
Let $\sum_2$ be the space of $0$-$1$-sequences, that is, 
$\sum_{2}=\lbrace (x_n)_{n\in\mathbb{N}}:\, x_n\in\lbrace0,1\rbrace\rbrace$. The sequence $(0)_{n \in \N}$ is considered to be the base point of $M$. 
Let $d$ be the metric on $\sum_2$ defined by:
\[d(x,y)=\sum_{n=1}^{+\infty}\dfrac{|x_n-y_n|}{2^n}\]
where $x=(x_1,x_2, \ldots)$ and $y=(y_1,y_2,...)$.
We consider the following map:
\[
    \begin{array}{lrcl}
    \sigma : & \sum_2 & \longrightarrow & \sum_2 \\
        & (x_1,x_2,x_3, \ldots) & \longmapsto & (x_2,x_3, x_4, \ldots) \end{array}
\]
The dynamical system $(\sum_2,\sigma)$ is often called the backward shift on two symbols. Note that $\sigma(0)=0$ and $\sigma$ is $2$-Lipschitz. 
It is well known that $\sigma$ is Devaney choatic (\cite[Theorem 1.36]{GrPe}) and mixing (\cite[Exercice 1.4.1]{GrPe}). Consequently, $\widehat{\sigma}$ is also Devaney choatic and mixing thanks to \cite[Theorem~2.3]{MuPe}. 
\bigskip

We wish to recall that $\sigma$ is quasi-conjugate to a map acting on the unit circle $\mathbb{T}$ be endowed with the normalized distance $d$ defined by:
\[ \forall \; \theta_1,\theta_2 \in [0,1[, \quad 
    d(e^{2\pi i\theta_1},e^{2\pi i\theta_2})=\begin{cases}
   |\theta_1-\theta_2| &\text{ if } |\theta_1-\theta_2| \leqslant\frac{1}{2} \\
    1-|\theta_1-\theta_2| &\text{ if } |\theta_1-\theta_2| \geqslant\frac{1}{2} 
    \end{cases},
\]
Indeed, let $D:\mathbb{T}\rightarrow\mathbb{T}$ be the doubling map $Dz=z^2$. It is known \cite[Example 1.37]{GrPe} that the map
\begin{equation}
        \begin{array}{lrcl}
    \phi : & \sum_2 & \longrightarrow & \mathbb{T} \\
        & (x_n) & \longmapsto & \exp{\left(2\pi i \sum_{n=1}^{\infty}\dfrac{x_n}{2^{n}}\right)} \end{array}
\label{2}        
\end{equation}
defines a quasiconjugacy from $\sigma$ to $D$. Note that $\phi(0)=1$ and $\phi$ is 1-Lipschitz. By Proposition \ref{3}, $\widehat{D}:\Free(\mathbb{T})\to\Free(\mathbb{T)}$ is Devaney choatic and  mixing as well.
\end{example}

\subsection{Some other criteria}

In the same way as we did for the HC, we may also ``push downward" the conditions of other well-known criteria to obtain metric conditions for Lipschitz operators. 
We will quickly mention two examples: the ``Supercyclicity Criterion" (see \cite[Theorem 1.14]{Survey})  and the ``Chaoticity Criterion" (see \cite[Theorem 6.10]{Survey}).

\begin{theorem}[Supercyclicity Criterion for Lipschitz Operators] \label{LipSCC} 
~\\
Let $(M,d)$ be a pointed separable metric space, $f:M\to M$ be a Lipschitz map such that $f(0)=0$. Assume that there exist an increasing sequence of integers $(n_k)_{k\in\mathbb{N}}$, two dense subsets $\mathcal{D}_1$, $\mathcal{D}_2$ in $M$ and a sequence of maps $g_{n_k}:\mathcal{D}_2 \to M$ such that, for any $x\in\mathcal{D}_1$ and $y\in\mathcal{D}_2$ the following conditions hold:
\begin{enumerate}
    \item $d(f^{n_k}(x),0)\, d(g_{n_k}(y),0)\underset{k\to+\infty}{\longrightarrow}0$;
    \item $d(f^{n_k}\circ g_{n_k}(y),y)\underset{k\to+\infty}{\longrightarrow}0$;
\end{enumerate}
Then $\widehat{f}$ is supercyclic.
\end{theorem}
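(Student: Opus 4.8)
The plan is to mimic the proof of Theorem~\ref{HCriterion}, pushing the metric hypotheses up to $\F(M)$ via the linearization property, and then to invoke the linear Supercyclicity Criterion on the dense subspaces $\lspan\,\delta(\mathcal D_1)$ and $\lspan\,\delta(\mathcal D_2)$. First I would set $X_0=\lspan\,\delta(\mathcal D_1)$ and $Y_0=\lspan\,\delta(\mathcal D_2)$; both are dense in $\F(M)$ because $\mathcal D_1,\mathcal D_2$ are dense in $M$ and $\delta$ is an isometry, and $\F(M)$ is separable since $M$ is. On generators, $(\widehat f)^{n_k}\delta(x)=\delta(f^{n_k}(x))$, hence $\|(\widehat f)^{n_k}\delta(x)\|=d(f^{n_k}(x),0)$ by Lemma~\ref{LemmaOrbit}~(1) and the isometry property of $\delta$. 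I would define the linear maps $S_{n_k}\colon Y_0\to\F(M)$ on generators by $S_{n_k}(\delta(y))=\delta(g_{n_k}(y))$, so that $\|S_{n_k}(\delta(y))\|=d(g_{n_k}(y),0)$ and $(\widehat f)^{n_k}S_{n_k}(\delta(y))=\delta(f^{n_k}\circ g_{n_k}(y))$, whence $\|(\widehat f)^{n_k}S_{n_k}(\delta(y))-\delta(y)\|=d(f^{n_k}\circ g_{n_k}(y),y)$.

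Next I would recall the linear Supercyclicity Criterion (\cite[Theorem 1.14]{Survey}): an operator $T$ on a separable Banach space $X$ is supercyclic provided there are an increasing sequence $(n_k)$, dense sets $X_0,Y_0\subset X$, and maps $S_{n_k}\colon Y_0\to X$ with $\|T^{n_k}x\|\cdot\|S_{n_k}y\|\to 0$ for all $x\in X_0,y\in Y_0$ and $T^{n_k}S_{n_k}y\to y$ for all $y\in Y_0$. Condition~(2) of the present theorem gives the second requirement directly on generators, and then for a general $y_0=\sum_j b_j\delta(y_j)\in Y_0$ by linearity and the triangle inequality. For the product condition, I would first check it on single generators: $\|(\widehat f)^{n_k}\delta(x)\|\cdot\|S_{n_k}\delta(y)\|=d(f^{n_k}(x),0)\,d(g_{n_k}(y),0)\to 0$ by hypothesis~(1). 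Then for $x_0=\sum_i a_i\delta(x_i)\in X_0$ and $y_0=\sum_j b_j\delta(y_j)\in Y_0$, I bound $\|(\widehat f)^{n_k}x_0\|\cdot\|S_{n_k}y_0\|\le\big(\sum_i|a_i|\,d(f^{n_k}(x_i),0)\big)\big(\sum_j|b_j|\,d(g_{n_k}(y_j),0)\big)$, which is a finite sum of products each tending to $0$, hence tends to $0$. This verifies all hypotheses of the linear criterion for $T=\widehat f$, so $\widehat f$ is supercyclic.

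The one point deserving a little care — and the only mild obstacle — is the passage from generators to arbitrary elements of $X_0$ and $Y_0$ in the \emph{product} condition (as opposed to the two separate ``$\to 0$'' conditions of the HC, where linearity alone sufficed). Here the product of two sums does not split as a single sum, but expanding the product into finitely many cross terms $|a_i||b_j|\,d(f^{n_k}(x_i),0)\,d(g_{n_k}(y_j),0)$ and applying hypothesis~(1) to each pair $(x_i,y_j)\in\mathcal D_1\times\mathcal D_2$ resolves it, since a finite sum of null sequences is null. I should also note that the maps $S_{n_k}$ are well defined: they are defined on the (algebraic) span $Y_0=\lspan\,\delta(\mathcal D_2)$ by prescribing their values on the linearly independent generators $\delta(y)$, $y\in\mathcal D_2$, exactly as in the proof of Theorem~\ref{HCriterion}; no continuity of $S_{n_k}$ is required by the criterion. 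With these observations the proof is complete.

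\begin{proof}
Let $X_0=\lspan\,\delta(\mathcal D_1)$ and $Y_0=\lspan\,\delta(\mathcal D_2)$. Since $\mathcal D_1$ and $\mathcal D_2$ are dense in $M$ and $\delta$ is an isometry, $X_0$ and $Y_0$ are dense in $\F(M)$; moreover $\F(M)$ is separable because $M$ is. For $y\in\mathcal D_2$, set $S_{n_k}(\delta(y))=\delta(g_{n_k}(y))$ and extend $S_{n_k}$ linearly to $Y_0$ (the elements $\delta(y)$, $y\in M$, being linearly independent, this is well defined).

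By Lemma~\ref{LemmaOrbit}~(1) and the fact that $\delta$ is an isometry, for every $x\in\mathcal D_1$ and every $y\in\mathcal D_2$ we have
\[
\|(\widehat f)^{n_k}\delta(x)\|=d(f^{n_k}(x),0),\qquad \|S_{n_k}\delta(y)\|=d(g_{n_k}(y),0),
\]
\[
\|(\widehat f)^{n_k}S_{n_k}\delta(y)-\delta(y)\|=\|\delta(f^{n_k}\circ g_{n_k}(y))-\delta(y)\|=d(f^{n_k}\circ g_{n_k}(y),y).
\]
By hypothesis~(1), $\|(\widehat f)^{n_k}\delta(x)\|\cdot\|S_{n_k}\delta(y)\|\to 0$ for all $x\in\mathcal D_1$, $y\in\mathcal D_2$, and by hypothesis~(2), $(\widehat f)^{n_k}S_{n_k}\delta(y)\to\delta(y)$ for all $y\in\mathcal D_2$.

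Now take arbitrary $x_0=\sum_{i=1}^p a_i\delta(x_i)\in X_0$ and $y_0=\sum_{j=1}^q b_j\delta(y_j)\in Y_0$, with $x_i\in\mathcal D_1$ and $y_j\in\mathcal D_2$. Then
\[
\|(\widehat f)^{n_k}x_0\|\cdot\|S_{n_k}y_0\|\ \le\ \Big(\sum_{i=1}^p|a_i|\,d(f^{n_k}(x_i),0)\Big)\Big(\sum_{j=1}^q|b_j|\,d(g_{n_k}(y_j),0)\Big)=\sum_{i=1}^p\sum_{j=1}^q|a_i|\,|b_j|\,d(f^{n_k}(x_i),0)\,d(g_{n_k}(y_j),0),
\]
which tends to $0$ as $k\to\infty$, being a finite sum of terms each converging to $0$ by hypothesis~(1). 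Likewise, by linearity and the triangle inequality,
\[
\|(\widehat f)^{n_k}S_{n_k}y_0-y_0\|\le\sum_{j=1}^q|b_j|\,d(f^{n_k}\circ g_{n_k}(y_j),y_j)\ \longrightarrow\ 0.
\]
Thus $\widehat f$, together with the sequence $(n_k)$, the dense sets $X_0,Y_0$, and the maps $S_{n_k}$, satisfies the Supercyclicity Criterion \cite[Theorem 1.14]{Survey}, so $\widehat f$ is supercyclic.
\end{proof}
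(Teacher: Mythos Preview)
Your proof is correct and follows exactly the approach the paper intends: the paper does not spell out a proof of Theorem~\ref{LipSCC} but presents it as a direct analogue of Theorem~\ref{HCriterion}, obtained by pushing the metric hypotheses to $\F(M)$ and invoking the linear Supercyclicity Criterion \cite[Theorem~1.14]{Survey}. Your added remark on handling the product condition --- expanding $\|(\widehat f)^{n_k}x_0\|\cdot\|S_{n_k}y_0\|$ into finitely many cross terms and applying hypothesis~(1) to each pair $(x_i,y_j)$ --- is the one genuinely new step compared to the proof of Theorem~\ref{HCriterion}, and you handle it cleanly.
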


\begin{corollary}
Let $M=\N \cup \{0\}$ be a pointed metric space endowed with a metric $d$ such that the map $f:M\rightarrow M$ defined by $f(0)=0$, $f(1)=0$ and $f(n+1)=n$ is Lipschitz. Then $\widehat{f}$ is supercyclic.
\end{corollary}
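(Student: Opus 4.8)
The plan is to apply the Supercyclicity Criterion for Lipschitz operators (Theorem~\ref{LipSCC}) directly, with $\mathcal D_1 = \mathcal D_2 = M$ and the full sequence $n_k = k$. Since $M = \N \cup \{0\}$ is countable it is separable, and $M$ is trivially dense in itself, so these are admissible choices regardless of which metric $d$ is placed on $M$ (as long as $f$ is Lipschitz, which is the standing hypothesis).

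As the family of ``inverse maps'' I would take $g_{n_k} = g^{k}$, where $g : M \to M$ is defined by $g(n) = n+1$ for every $n$. The reason for this choice is that $g$ is a genuine right inverse of $f$: indeed $f(g(0)) = f(1) = 0$ and $f(g(n)) = f(n+1) = n$ for every $n \geq 1$, hence $f \circ g = \mathrm{id}_M$ and therefore $f^{k} \circ g^{k} = \mathrm{id}_M$ for every $k \in \N$. Consequently $d(f^{n_k} \circ g_{n_k}(y), y) = 0$ for all $y \in M$ and all $k$, so condition~(2) of Theorem~\ref{LipSCC} holds in the strongest possible way.

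It remains to check condition~(1), that is $d(f^{n_k}(x),0)\, d(g_{n_k}(y),0) \to 0$ for all $x,y \in M$. Here I would use that $f$ sends every orbit to the base point in finitely many steps: from $f(1) = f(0) = 0$ and $f(n+1) = n$ one gets $f^{m}(n) = 0$ as soon as $m \geq n$. Hence, for a fixed $x \in M$, the factor $d(f^{n_k}(x),0)$ vanishes for all $k$ large enough (because $n_k \to \infty$), so the product vanishes eventually and in particular tends to $0$ — irrespective of the behaviour of $d(g_{n_k}(y),0)$, which may remain bounded away from $0$. This is exactly why the argument only yields supercyclicity and not hypercyclicity in general. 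Both hypotheses of Theorem~\ref{LipSCC} being verified, we conclude that $\widehat{f}$ is supercyclic.

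I do not expect any real obstacle: this is a routine application of Theorem~\ref{LipSCC}. The single point worth emphasising is that the verification of condition~(1) does not see the metric at all — the first factor is eventually zero — which is precisely the reason the conclusion holds for \emph{every} metric making $f$ Lipschitz, and not only for the tree-type metrics of Proposition~\ref{hyperleftshift}.
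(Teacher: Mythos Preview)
Your proof is correct and is precisely the intended argument: the corollary is stated immediately after Theorem~\ref{LipSCC} without proof because it is a direct application of that criterion with $\mathcal{D}_1=\mathcal{D}_2=M$, $n_k=k$ and $g_{n_k}=g^k$ for $g(n)=n+1$, exactly as you describe. Your observation that the first factor in condition~(1) eventually vanishes---so that no control on the metric is needed---is the whole point of the corollary.
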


\begin{theorem}[A Chaoticity criterion for Lipschitz operators]\label{Lipschitz Chaoticity criterion} 
~\\
Let $(M,d)$ be a pointed separable metric space and let  $f:M\to M$ be a Lipschitz map such that $f(0)=0$. Assume that there exist a dense subset $\mathcal{D}$ in $M$ and a map $g:\mathcal{D} \to \mathcal{D}$ such that, for any $x\in\mathcal{D}$ the following conditions hold:
\begin{enumerate}
    \item $\sum_{n=0}^{\infty}d(f^{n}(x),0)$ and $\sum_{n=0}^{\infty}d(g^{n}(x),0)$ are convergent;
    \item $f\circ g=\mathrm{Id}_{M}$;
\end{enumerate}
Then $\widehat{f}$ satisfies the Chaoticity Criterion.
\end{theorem}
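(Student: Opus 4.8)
The plan is to verify directly that $\widehat f$ satisfies the abstract Chaoticity Criterion of \cite[Theorem~6.10]{Survey}, transporting the data $(\mathcal D, g)$ through the isometric embedding $\delta$. Observe first that $\F(M)$ is separable, since $M$ is, so that the criterion is applicable to $T = \widehat f$.

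First I would set $X_0 = \mathrm{span}\,\delta(\mathcal D)$. Since $\mathcal D$ is dense in $M$ and $\delta$ is an isometry, $\delta(\mathcal D)$ is dense in $\delta(M)$, hence $X_0$ is dense in $\overline{\mathrm{span}}\,\delta(M) = \F(M)$. Next I would define a linear map $S : X_0 \to X_0$ by $S(\delta(x)) = \delta(g(x))$ for $x \in \mathcal D$, extended by linearity. This is legitimate: the evaluation functionals $\{\delta(x) : x \in M \setminus \{0\}\}$ are linearly independent in $\F(M)$ (using McShane extension of prescribed values on a finite subset) and $\delta(0) = 0$, so every element of $X_0$ has a unique expansion $\sum_i a_i \delta(x_i)$ with distinct $x_i \in \mathcal D \setminus \{0\}$, and one sets $S(\sum_i a_i\delta(x_i)) = \sum_i a_i \delta(g(x_i))$. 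Since $g$ maps $\mathcal D$ into $\mathcal D$, indeed $S(X_0) \subseteq X_0$; note that $S$ need not be continuous (equivalently, $g$ need not be Lipschitz), which is harmless as the Chaoticity Criterion does not require it.

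Then I would check the three conditions of \cite[Theorem~6.10]{Survey} for $T = \widehat f$ and this $S$; it suffices to do so on the generators $\delta(x)$, $x \in \mathcal D$, and conclude by linearity. For the commutation relation, hypothesis $(2)$ gives $\widehat f(S(\delta(x))) = \widehat f(\delta(g(x))) = \delta(f(g(x))) = \delta(x)$, so $\widehat f \circ S = \mathrm{Id}$ on $X_0$; iterating, and using $S^k(X_0) \subseteq X_0$, yields $(\widehat f)^n S^n = \mathrm{Id}$ on $X_0$ for every $n$. For the first series, Lemma~\ref{LemmaOrbit} gives $(\widehat f)^n \delta(x) = \delta(f^n(x))$, hence $\|(\widehat f)^n \delta(x)\| = d(f^n(x),0)$, and by hypothesis $(1)$ the nonnegative series $\sum_{n \geq 0} d(f^n(x),0)$ converges; this is the absolute convergence of $\sum_{n\geq 0}(\widehat f)^n \delta(x)$ in the Banach space $\F(M)$, hence its convergence. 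Likewise $S^n(\delta(x)) = \delta(g^n(x))$ with $g^n(x) \in \mathcal D$ (since $g(\mathcal D) \subseteq \mathcal D$), so $\|S^n \delta(x)\| = d(g^n(x),0)$ and $\sum_{n\geq 0} d(g^n(x),0) < \infty$ by hypothesis $(1)$, giving the absolute convergence of $\sum_{n\geq 0}S^n \delta(x)$. All hypotheses of \cite[Theorem~6.10]{Survey} being met, $\widehat f$ satisfies the Chaoticity Criterion (and is in particular Devaney chaotic).

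I do not expect a genuine obstacle: the only points requiring a little care are the well-definedness of the possibly unbounded linear map $S$ on $X_0$ (which rests on the linear independence of evaluation functionals) and the fact that the iterates $g^n(x)$ remain in $\mathcal D$, so that hypothesis $(1)$ applies to them. If one also wishes to spell out why the criterion forces Devaney chaos, one uses that the identities $(\widehat f)^n S^n = \mathrm{Id}$ on $X_0$ together with $(\widehat f)^n\gamma \to 0$ and $S^n\gamma \to 0$ for $\gamma \in X_0$ (tails of convergent series) give the Hypercyclicity Criterion for the full sequence, hence mixing; while for $\gamma \in X_0$ and $N$ large the absolutely convergent vector $p = \sum_{j\geq 0}(\widehat f)^{jN}\gamma + \sum_{j\geq 1}S^{jN}\gamma$ satisfies $(\widehat f)^N p = p$ and $\|p - \gamma\| \leq \sum_{n\geq N}\big(\|(\widehat f)^n\gamma\| + \|S^n\gamma\|\big) \to 0$, so $\mathrm{Per}(\widehat f)$ is dense. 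Since the statement only claims that $\widehat f$ satisfy the criterion, the verification above suffices.
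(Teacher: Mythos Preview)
Your proof is correct and follows precisely the route the paper intends: the authors do not spell out a proof of this theorem, but indicate that it is obtained ``in the same way as we did for the HC'' (Theorem~\ref{HCriterion}), namely by taking $X_0=\mathrm{span}\,\delta(\mathcal D)$, defining $S$ linearly via $S(\delta(x))=\delta(g(x))$, and translating the metric hypotheses into the Banach-space conditions of \cite[Theorem~6.10]{Survey} through $\|\delta(y)\|=d(y,0)$. Your extra care in justifying the well-definedness of $S$ and the fact that $g^n(x)\in\mathcal D$ is a welcome addition that the paper leaves implicit.
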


\begin{remark}
 The map $f$ given in  Example \ref{ExampleHCnotHCL} does not satisfy the chaoticity criterion for Lipschitz operators but $\widehat{f}$ satisfies the chaoticity criterion.
 \end{remark}

Similarly as in Proposition~\ref{hyperleftshift}, we characterise below the chaoticity for ``backward shift Lipschitz operators" defined on (the Lipschitz-free space over) countably branching tree of height one.

\begin{proposition}\label{ChaoShift}
Let $M= \N \cup \{0\}$ be a pointed metric space and let  $f:M \to  M$ be the map defined by $f(0)=0$ and $f(n)= n-1$ whenever $n \geq 1$. Assume that $M$ is endowed with a metric $d$ such that $f$ is Lipschitz and $d(n,m)=d(n,0)+d(0,m)$ whenever $n \neq m \in \N$.
Then the following conditions are equivalent:
\begin{enumerate}
\item $\widehat{f}$ is Devaney chaotic. 
\item The series $\sum_{n=1}^{\infty}\, d(n,0)$ converges.
\end{enumerate}
If moreover the sequence $\big( d(n,0)\big)_n$ is decreasing, then the two above conditions are equivalent to
\begin{enumerate}
    \item[(3)] $f$  satisfies the chaoticity criterion for Lipschitz operators.
\end{enumerate}

\end{proposition}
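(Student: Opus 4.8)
The plan is to transport everything to $\ell_1$ via Godard's isometry $\psi(\delta(n)) = d(n,0)e_n$ from Proposition~\ref{PropTreesl1}, under which $\widehat{f}$ is conjugate to the unilateral weighted backward shift $T$ on $\ell_1$ with weights $w_n = d(n-1,0)/d(n,0)$ ($n\ge 2$) and $Te_1 = 0$. Since Devaney chaoticity is preserved by conjugacy, the equivalence (1)$\iff$(2) should follow directly from the known characterization of chaotic weighted backward shifts on $\ell_1$: such a shift is Devaney chaotic iff $\sum_n \big(\prod_{k=2}^{n} w_k\big)^{-1} < \infty$ (see \cite[Theorem 4.2 and Example 4.9(b)]{GrPe} or \cite[Example 2.32]{Survey}). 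The telescoping product gives $\prod_{k=2}^{n} w_k = d(1,0)/d(n,0)$, so the series becomes $\sum_n d(n,0)/d(1,0)$, which converges iff $\sum_{n\ge1} d(n,0)$ converges. That is exactly condition (2). I would also remark that (2) forces $d(n,0)\to 0$, so by Proposition~\ref{hyperleftshift} this chaotic case sits inside the hypercyclic regime, as it should.

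For the second part, assume in addition that $(d(n,0))_n$ is decreasing; I must show (2)$\iff$(3), where (3) is the Chaoticity Criterion for Lipschitz operators (Theorem~\ref{Lipschitz Chaoticity criterion}). For (2)$\implies$(3): take $\mathcal D = M$ (or the cofinite-support vectors if one prefers; but the criterion is stated pointwise on a dense set, and $M$ itself is dense) and define $g:M\to M$ by $g(0)=0$ and $g(n)=n+1$, which is Lipschitz precisely because $d(n+1,0)/d(n,0)\le 1$ by monotonicity; clearly $f\circ g = \mathrm{Id}_M$, giving condition (2) of Theorem~\ref{Lipschitz Chaoticity criterion}. For condition (1): for fixed $x=j\in M$, the forward orbit $f^n(j)$ hits $0$ for $n\ge j$, so $\sum_n d(f^n(j),0)$ is a finite sum; and $\sum_n d(g^n(j),0) = \sum_{n\ge 0} d(j+n,0) = \sum_{m\ge j} d(m,0)$, which converges by (2). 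Hence $f$ satisfies the criterion. The implication (3)$\implies$(1) is immediate since the Chaoticity Criterion implies Devaney chaoticity of $\widehat f$ (the content of Theorem~\ref{Lipschitz Chaoticity criterion} combined with \cite[Theorem 6.10]{Survey}), and then (1)$\implies$(2) by the first part; alternatively one observes directly that if $g$ is any map witnessing (3) with the given tree metric, then $g$ must eventually push points "outward" and $\sum_n d(g^n(x),0)<\infty$ for $x$ near the relevant dense set forces $\sum d(n,0)<\infty$.

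The only genuinely delicate point is bookkeeping in the second part: one must make sure the "inverse map" $g$ used in the Chaoticity Criterion is globally Lipschitz on $M$ — this is where the monotonicity hypothesis $d(n+1,0)\le d(n,0)$ is used, and it is also implicitly needed so that $\mathrm{Lip}(g)$ is finite (note $d(g(n),g(m)) = d(n+1,0)+d(m+1,0) \le d(n,0)+d(m,0) = d(n,m)$, so in fact $g$ is $1$-Lipschitz). Everything else is a routine translation through the isometry $\psi$ and an application of the two cited criteria, so I expect no serious obstacle beyond citing the weighted-shift chaoticity characterization in the precise form needed and checking the telescoping computation.
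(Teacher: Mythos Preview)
Your proposal is correct and follows essentially the same route as the paper: conjugate $\widehat{f}$ to a weighted backward shift on $\ell_1$ via $\psi$, invoke the standard characterization of chaotic weighted shifts (the paper cites \cite[Theorem~6.12]{Survey} rather than the references you name, but the content is the same), and telescope the product of weights to obtain $\sum d(n,0)$; for $(2)\Rightarrow(3)$ both you and the paper use $g(n)=n+1$ and check the two conditions of Theorem~\ref{Lipschitz Chaoticity criterion} directly.

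Two small remarks. First, your tail-of-a-convergent-series argument $\sum_{n\ge 0} d(j+n,0)=\sum_{m\ge j} d(m,0)<\infty$ is actually slightly cleaner than the paper's, which uses monotonicity to bound $d(g^n(m),0)\le d(n,0)$; your version does not use monotonicity at that step. Second, your claim that monotonicity is needed to make $g$ Lipschitz is unnecessary: Theorem~\ref{Lipschitz Chaoticity criterion} only asks for \emph{a map} $g:\mathcal D\to\mathcal D$, with no Lipschitz requirement, so this verification (while correct) is superfluous. The monotonicity hypothesis is only used in the paper's comparison estimate, and with your tail argument it is not obviously needed at all for $(2)\Rightarrow(3)$---though of course the statement as given includes it.
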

\begin{proof}
As in the proof of Proposition \ref{hyperleftshift}, the operator $\widehat{f}$ is conjugate with the weighted backward  shift operator $T$ defined on  $\ell_1$ by  
$$ Te_1=0 \quad \text{and} \quad \forall n\geqslant 2, \quad   Te_n=\dfrac{d(n-1,0)}{d(n,0)}e_{n-1} .$$ 
So $\widehat{f}$ is chaotic if and only if $T$ is so. Therefore, by \cite[Theorem 6.12]{Survey}, we obtain
\begin{align*}
\widehat{f} \text{  chaotic }&\Longleftrightarrow \sum_{n\geqslant1}\left(\dfrac{d(1,0)}{d(n,0)}\right)^{-1}<+\infty\\
&\Longleftrightarrow \sum_{n=1}^{\infty}\, d(n,0)<+\infty,
\end{align*}
which proves  $(1)\Longleftrightarrow(2)$.
It is clear that $(3)\Rightarrow(1)$, so let us show that $(2)\Rightarrow(3)$. Let $g:M\rightarrow M$ be the map defined by $g(0)=0$ and $g(n)=n+1$. Fix $m\geqslant1$. Since $d(g^n(m),0)\leqslant d(n,0)$ and $\sum_{n=1}^{\infty}\, d(n,0)$ converges, we have that the series $\sum_{n=1}^{\infty}\, d(g^n(m),0)$ also converges. Moreover, we have $f\circ g=\mathrm{Id}_M$ and the series $\sum_{n=1}^{\infty}\, d(f^n(m),0)$ converges as well.  
\end{proof}

Recall that we proved in Proposition~\ref{PropPeriodic} that if $\mathrm{Per}(f)$ is dense in $M$, then $\mathrm{Per}(\widehat{f})$ is dense in $\F(M)$. We claim that the reverse implication is not true.
\begin{example} \label{ExamplePeriodic}
Let $M$ and $f: M \to M$ be as in Proposition~\ref{ChaoShift}, with $d(0,n)=\frac{1}{2^n}$ for every $n \geq 1$.
Clearly $\mathrm{Per}(f)=\lbrace0\rbrace$ is not dense in $M$. On the other hand, since
$$\sum_{n=1}^{\infty}d(n,0)=\sum_{n=1}^{\infty}\dfrac{1}{2^n}<+\infty,$$
 $\widehat{f}$ is Devaney chaotic thanks to Proposition~\ref{ChaoShift}.
\end{example}

\section{The particular case of compact intervals}

From now on, we will consider metric spaces $M= [a,b]$, where $a<b\in \mathbb{R}$, and $f : M \to M$ will be a Lipschitz map having (at least) one fixed point $c\in [a,b]$.
For such metric spaces $M$, we have a surjective isometry $\Phi : \delta(x) \in \F(M) \mapsto \mathbf{1}_{[c,x]} \in L^1([a,b])$ (where $\mathbf{1}_{[c,x]}$ is understood as $-\mathbf{1}_{[x,c]}$ when $x\leq c$). Thus, $\widehat{f} : \F(M) \to \F(M)$ is conjugate to an operator $T : L^1([a,b]) \to L^1([a,b])$. This operator acts on indicator functions as follows: if $a\leq s \leq t \leq b$, we have
\begin{align}\label{formulesurL1}
T(\mathbf{1}_{[s,t]})=\left\{\begin{array}{cl}
\mathbf{1}_{[f(s),f(t)]},& \text{if} \ \ f(s) \leq f(t) \medskip\\
-\mathbf{1}_{[f(t),f(s)]}, &  \text{if} \ \ f(t) \leq f(s).
\end{array}\right.
\end{align} 
The next theorem will give sufficient conditions on $f$ which ensure that $\widehat{f}$ is hypercyclic, and so this will exhibit a new class of hypercyclic operators acting on $L^1([a,b])$.
To our knowledge, there is not much study on hypercyclic operators defined on $L^1(I)$ when $I$ is a bounded interval in $\R$. If $I= [0, +\infty)$ or $I = \mathbb{R}$, operators of translation $T_t : f \mapsto f(\cdot + t)$ have been studied as well as the dynamical properties of the $C_0$-semigroup $(T_t)_{t\geq 0}$ on weighted $L^p$-spaces $L^p(I, \omega \text{d}\lambda)$, where $\lambda$ is the Lebesgue measure and $\omega$ is a weight on $I$. For instance, in
\cite{DSW}, necessary and sufficient conditions are given in terms of the weight $\omega$ that ensure the hypercyclicity of $T_t$ or $(T_t)_{t\geq 0}$.  

\medskip

\begin{theorem}\label{mainthminterval}
If $f:[a,b]\to[a,b]$ is a Lipschitz and topologically transitive map with a fixed point $c\in[a,b]$, then $\widehat{f}$ is weakly mixing. \\
If moreover $f$ admits at least two fixed points, then $\widehat{f}$ is mixing.
\end{theorem}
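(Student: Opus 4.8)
## Proof Strategy

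The plan is to verify the Hypercyclicity Criterion (HC) for $\widehat{f}$ — or rather, the HCL of Theorem~\ref{HCriterion} with $\lambda = 1$ — by exploiting the dynamical richness coming from topological transitivity of $f$ on the interval. Since a bounded operator satisfies the HC if and only if it is weakly mixing, and satisfies it with respect to the full sequence $(n)$ if and only if it is mixing, both halves of the statement will follow once we produce the relevant dense sets $\mathcal{D}_1, \mathcal{D}_2 \subset [a,b]$ and ``inverse'' maps $g_{n_k}$. The key structural input is that transitive interval maps are extremely well understood: by classical one-dimensional dynamics (e.g. via the results of Barge--Martin / Blokh, as used in the cited \cite{VeBe}), a transitive map $f : [a,b] \to [a,b]$ is automatically Devaney chaotic, has periodic points dense, and — crucially — is \emph{locally eventually onto} (for every nonempty open $U \subseteq [a,b]$ there is $N$ with $f^N(U) = [a,b]$). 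I would isolate this as a preliminary lemma, or simply cite it.

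First I would choose $\mathcal{D}_1 = \mathcal{D}_2 = $ a suitable dense set. For condition (1) of the HCL, $d(f^{n_k}(x), c) \to 0$, I would use that the fixed point $c$ has a ``basin-type'' behaviour on a dense set: since periodic points are dense, and since we want the orbit of $x$ to approach $c$, I would take $\mathcal{D}_1$ to be a dense set of points whose forward orbit can be steered to $c$. If $f$ has two fixed points $c < c'$, then $f$ restricted to $[c, c']$ is itself a transitive interval map fixing both endpoints, and one can use the endpoint(s) together with the locally-eventually-onto property to ensure any small interval around $c$ is hit — but here we need the orbit to \emph{stay} near $c$, which is where the two-fixed-point hypothesis does real work (an eventually-fixed point, or a point mapped exactly onto $c$, gives $d(f^n(x),c) = 0$ for large $n$). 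For the one-fixed-point / weakly mixing case, it suffices to have a subsequence $(n_k)$ along which things work, which is a much weaker demand and should be arrangeable from transitivity alone via a Baire-category / back-and-forth construction of $\mathcal{D}_1$.

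For conditions (2) and (3) — existence of $g_{n_k} : \mathcal{D}_2 \to [a,b]$ with $d(g_{n_k}(y), c) \to 0$ and $d(f^{n_k}(g_{n_k}(y)), y) \to 0$ — I would use the locally eventually onto property in reverse: given $y$ and a tiny neighbourhood $V$ of $c$, since $f^{m}(V) = [a,b]$ for all large $m$, there is a preimage of (a point close to) $y$ inside $V$; call it $g_{m}(y)$. This simultaneously makes $g_m(y)$ close to $c$ and makes $f^m(g_m(y))$ close to $y$. Matching the indices so that the same sequence $(n_k)$ works for all three conditions, across a countable dense $\mathcal{D}_2$, is a routine diagonal argument. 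For the mixing statement one needs this for \emph{every} sufficiently large $n$, not just a subsequence; this is exactly why two fixed points are needed — with $c < c'$ both fixed, the interval $[c,c']$ absorbs the basin behaviour at every scale and every large time, so conditions (1)--(3) hold along the full sequence $(n)$. I would then invoke Theorem~\ref{HCriterion} (with $\lambda = 1$) to conclude that $\widehat{f}$ is weakly mixing in general, and mixing when there are two fixed points.

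\textbf{Main obstacle.} The delicate point is condition (1): steering a \emph{dense} set of points toward $c$ under $f^{n_k}$. Transitivity gives us points that come close to $c$ once, but not points whose orbits \emph{remain} close, and $c$ need not be attracting. I expect the resolution is: (a) for the weakly-mixing case, relax to a subsequence and build $\mathcal{D}_1$ by a category/Mittag-Leffler argument, using that $\{x : d(f^{n}(x),c) < \varepsilon \text{ for some } n \geq N\}$ is open and dense for each $N$ (a consequence of transitivity, indeed of $c$ being in the $\omega$-limit set of a dense set of points); (b) for the mixing case, use the second fixed point to get, on a dense set, points that are \emph{eventually} mapped exactly into an arbitrarily small neighbourhood of $c$ and stay — concretely, preimages under high iterates of the fixed point $c$ itself (which exist and accumulate everywhere by local-eventual-onto-ness), for which $f^n(x) = c$ for all large $n$. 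Handling the sign bookkeeping in the $L^1$ model \eqref{formulesurL1} is not needed if we argue entirely at the level of $\F(M)$ via Theorem~\ref{HCriterion}, which I would do.
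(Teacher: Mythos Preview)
Your approach is genuinely different from the paper's, but it contains a real gap. The claim that a transitive interval map is \emph{locally eventually onto} is false: for continuous interval maps, locally eventually onto is equivalent to \emph{mixing}, not to transitivity (see e.g.\ \cite{Sy}). The map in Example~\ref{Example2} is transitive but not mixing---it swaps $[0,\tfrac12]$ and $[\tfrac12,1]$---so no iterate of a small one-sided neighbourhood of the fixed point ever covers all of $[0,1]$. This breaks your construction of $g_{n_k}$ for conditions~(2)--(3) in exactly the one-fixed-point case you need it for. Separately, your Baire argument for condition~(1) does not yield a \emph{common} subsequence: Baire gives a dense $G_\delta$ of points $x$ with $\liminf_n d(f^n(x),c)=0$, but each such $x$ comes with its own good times, and there is no reason a diagonalisation can align them (the good times for $x_1$ and $x_2$ may be disjoint). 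Your alternative $\mathcal{D}_1=\bigcup_n f^{-n}(\{c\})$ does fix this, but showing it is dense, and producing $g_{n_k}(y)$ near $c$, both ultimately require the very dichotomy you are trying to bypass.

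The paper's proof uses that dichotomy (Barge--Martin, \cite{Barge,Sy}) directly: a transitive interval map is either mixing, or has a unique fixed point $c$ with $f([a,c])=[c,b]$, $f([c,b])=[a,c]$, and $f^2$ mixing on each half. In the first case $\widehat{f}$ is mixing by \cite{MuPe}; in the second, $\widehat{f^2}$ viewed on $L^1([a,b])=L^1([a,c])\oplus L^1([c,b])$ restricts to a mixing operator on each summand (again by \cite{MuPe}), hence is itself mixing, so $\widehat{f}$ is weakly mixing. This also makes the two-fixed-point clause immediate: two fixed points force the first alternative. Your route via the HCL can be salvaged once you feed in Barge--Martin (apply the locally-eventually-onto argument to $f^2$ on each half and run the HCL along the even integers), but at that point the paper's argument is shorter.
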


\begin{proof}
Our main ingredient is a result due to Barge and Martin (see \cite[Theorem~3]{Barge}), the next formulation can be found in  \cite[Theorem~2.19]{Sy}. It states that, since $f$ is topologically transitive, one of the following cases holds:
\begin{enumerate}
    \item[$(i)$] $f$ is mixing.
    \item[$(ii)$] $c \in (a,b)$ is the unique fixed point of $f$, $f([a,c])=[c,b]$, $f([c,b])=[a,c]$ and both maps $f^{2}_{|[a,c]}$, $f^{2}_{|[c,b]}$ are mixing.
\end{enumerate}
If $f$ has at least two fixed points then it is mixing, and $\widehat{f}$ is also mixing thanks to \cite[Theorem~2.3]{MuPe}. 
Otherwise, $c \in (a,b)$ is the unique fixed point of $f$ and $f([a,c])=[c,b]$, $f([c,b])=[a,c]$ and both maps $f^{2}_{|[a,c]}$, $f^{2}_{|[c,b]}$ are mixing. Let $h_a=f^{2}_{|[a,c]}$ and $h_b=f^{2}_{|[c,b]}$. By the latter, we have that $$\widehat{h}_a:\F([a,c]) \to \F([a,c]) \quad \text{ and } \quad \widehat{h}_b:\F([c,b]) \to \F([c,b])$$ are mixing. It is enough to show that $\widehat{f
^2}$ is mixing. 
To do so, we will show that the corresponding conjugate map associated to $\widehat{f^2}$ (see \eqref{formulesurL1}), say $T: L^1([a,b]) \to L^1([a,b])$, is mixing. We will consider $L^1([a,c])$ and $L^1([c,b])$ as subspaces of $L^1([a,b])$.  Let $B(u,r), B(v,r') \subset L^1([a,b])$ be two open balls and write $u = u_1 + u_2, v=v_1 + v_2$ where $u_1, v_1 \in L^1([a,c])$ and $u_2, v_2 \in L^1([c,b])$. The operator $T$ maps $L^1([a,c])$ into $L^1([a,c])$ and $L^1([c,b])$ into $L^1([c,b])$. Its restrictions to these two spaces, that we will denote by $T_1$ and $T_2$ respectively, are mixing. Indeed, $T_1$ is conjugate to $\widehat{h_a}$ while $T_2$ is conjugate to $\widehat{h_b}$. Hence, there exists $N\geq 1$ such that for any $n\geq N$,
$$T_1^n(B(u_1, r/2)) \cap B(v_1, r'/2) \neq \emptyset  \quad \text{ and } \quad T_2^n(B(u_2, r/2)) \cap B(v_2, r'/2)\neq \emptyset.$$
Then, for every $n\geq N$, there exist $u_{1,n} \in B(u_1, r/2)$ and $u_{2,n} \in B(u_2, r/2)$ such that
$$T_1^n(u_{1,n}) \in B(v_1, r'/2) \quad \text{ and } \quad T_2^n(u_{2,n}) \in B(v_2, r'/2).$$
We deduce that $u_n := u_{1,n} + u_{2,n} \in B(u, r)$, where we naturally see $L^1([a,c])$ and $L^1([c,b])$ as subspaces of $L^1([a,b])$. Moreover, we have 
$$T^n(u_n) = T_1^n(u_{1,n}) + T_2^n(u_{2,n}) \in B(v_1, r'/2) + B(v_2, r'/2) \subset B(v, r').$$
We proved that for every $n\geq N$, $T^n(B(u, r)) \cap B(v, r') \neq \emptyset$, which shows that $T$ is mixing.
\medskip
\end{proof}

\begin{remark}
If $c=a$ or $c=b$, then necessarily $f$ has another fixed point. Indeed, assume that $c=a$, the case $c=b$ being similar. The map $f$ is continuous and transitive, so it must be onto. In particular, there exists $y\in (a,b]$ such that $f(y)=b$. Hence, the continuous function $h(x)=f(x)-x$ satisfies $h(y)\geq 0$ and $h(b)\leq 0$ so it vanishes at some point which is a fixed point of $f$, distinct from $a$. 
\end{remark}

\begin{corollary} \label{CorDC}
Let $f:[a,b] \to [a,b]$ be a Lipschitz and topologically transitive map with a fixed point $c\in[a,b]$. Then $\widehat{f}$ is Devaney chaotic.
\end{corollary}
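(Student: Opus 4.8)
The plan is to combine Theorem~\ref{mainthminterval} with Proposition~\ref{PropPeriodic} and the classical facts about Devaney chaos on the interval. By definition, $\widehat f$ is Devaney chaotic if it is topologically transitive and $\mathrm{Per}(\widehat f)$ is dense in $\F(M)$. For the transitivity part, Theorem~\ref{mainthminterval} already gives more: since $f$ is Lipschitz, topologically transitive and has a fixed point $c$, the operator $\widehat f$ is weakly mixing, hence in particular topologically transitive (and, since $\F([a,b])$ is separable, hypercyclic). So the only remaining point is the density of the periodic points of $\widehat f$.

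For that, I would first recall that a topologically transitive continuous self-map of a compact interval automatically has a dense set of periodic points; this is a standard fact in one-dimensional dynamics (it follows, e.g., from the Barge--Martin description already invoked in the proof of Theorem~\ref{mainthminterval}: in case $(i)$ mixing interval maps have dense periodic points, and in case $(ii)$ one applies the same to the two mixing maps $f^2_{|[a,c]}$ and $f^2_{|[c,b]}$ and checks that their union, together with $c$, is dense in $[a,b]$ and consists of periodic points of $f$). In other words, $f$ is Devaney chaotic. Then $\mathrm{Per}(f)$ is dense in $M$, and Proposition~\ref{PropPeriodic} immediately yields that $\mathrm{Per}(\widehat f)$ is dense in $\F(M)$.

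Putting the two pieces together: $\widehat f$ is topologically transitive by Theorem~\ref{mainthminterval} and has dense periodic points by the previous paragraph, so $\widehat f$ is Devaney chaotic, as claimed.

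The main (and really only) obstacle is the interval-dynamics input, namely that transitivity forces density of periodic points for interval maps; this is not something proved in the excerpt, but it is classical (see e.g. the references on one-dimensional dynamics underlying \cite{Barge, Sy}), and once it is granted the corollary is just an assembly of Theorem~\ref{mainthminterval} and Proposition~\ref{PropPeriodic}. One should also note that the base point $c$ being a fixed point is exactly what is needed for $\widehat f$ to be defined and for Proposition~\ref{PropPeriodic} to apply, so no extra care is required there.
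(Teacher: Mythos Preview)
Your proposal is correct and follows essentially the same route as the paper: use the classical one-dimensional dynamics fact that transitivity on a compact interval implies Devaney chaos (the paper cites \cite{VeBe} for this), then apply Proposition~\ref{PropPeriodic} for the density of periodic points and Theorem~\ref{mainthminterval} for the transitivity of $\widehat f$. The only cosmetic difference is that you sketch how to recover the interval-dynamics input from the Barge--Martin dichotomy rather than quoting \cite{VeBe} directly.
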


\begin{proof}
It is known \cite{VeBe} that a continuous map $g : I \to I$, where $I$ is a real interval, is topologically transitive if and only if it is Devaney chaotic. 
By assumption, our Lipschitz map  $f:[a,b]\to [a,b]$ is transitive, so it is Devaney chaotic as well. By Proposition~\ref{PropPeriodic}, the set of periodic points of $\widehat{f}$ is dense in $\F(M)$. Moreover $\widehat{f}$ is hypercyclic thanks to Theorem~\ref{mainthminterval}. Therefore, $\widehat{f}$ is Devaney chaotic.
\end{proof}

\begin{remark}
We do not know whether $\widehat{f} : \F(\R) \to \F(\R)$ is hypercyclic whenever $f : \R \to \R$ is a transitive Lipschitz map (having a fixed point). In fact, the decomposition given by \cite[Theorem~2.19]{Sy} was crucial to our proof (see also \cite[Theorem 2.2]{splitingtheorem} for a similar statement in the more general setting of locally connected compact metric spaces). We do not  know if a comparable result holds in the case of unbounded intervals. Nevertheless, any weakly mixing Lipschitz maps $f : \R \to \R$ produces a weakly mixing Lipschitz operator acting on $L^1(\R)$ thanks to \cite[Theorem~2.3]{MuPe} (see \cite{Nagar} at page 2 for an example).
\end{remark}

\subsection{Application}

In this subsection we will provide two Lipschitz self-maps $f$ defined on $[0,1]$ which illustrate the two cases in \cite[Theorem~2.19]{Sy}. 
They are hypercyclic so that their linearization are both  mixing and Devaney chaotic, thanks to Theorem~\ref{mainthminterval}. Also, the obtained operators $\widehat{f}$ acting on $L_1([0,1])$ will be made explicit.

\begin{example}\label{Example1interval}
The map $f : [0,1] \to [0,1]$ defined below is often called the tent map, it is transitive as it is explained in \cite[Examples 1.12 ~(a)]{GrPe}. Here we consider $0$ to be the base point of $[0,1]$. 

\begin{minipage}{0.50\linewidth}
\begin{equation*}
f(x) =
     \left\{
        \begin{array}{rl}
        2x & \text{if } 0 \leq x \leq \frac 12,   \\[0.5em]
        2-2x & \text{if }  \frac 12 \leq x \leq 1.
        \end{array}
     \right.
\end{equation*}
\end{minipage}
\begin{minipage}[c]{0.45\linewidth}
\begin{tikzpicture}[thick, scale=2.3]

    \draw[->,>=latex, gray] (-0.2,0) -- (1.2,0) node[above] {$x$};
	\draw[->,>=latex, gray] (0,-0.2) -- (0,1.2) node[right] {$y$};

	\draw[domain=0:0.5, blue,ultra thick, smooth] plot ({\x},{2*\x}) node[above] {$f(x)$};
	\draw[domain=0.5:1, blue,ultra thick, smooth] plot ({\x},{-2*\x+2}) ;

	\fill (0,0) circle (0.2pt) node[below left] {$0$};
	\fill (1,0) circle (0.2pt) node[below] {$1$};
	\fill (0.5,0) circle (0.2pt) node[below] {$\frac 12$};
	\fill (0.25,0) circle (0.2pt) node[below] {$\frac 14$};
	\fill (0.75,0) circle (0.2pt) node[below] {$\frac 34$};
	\fill (0,1) circle (0.2pt) node[left] {$1$};
	\fill (0,0.5) circle (0.2pt) node[left] {$\frac 12$};
	\fill (0,0.25) circle (0.2pt) node[left] {$\frac 14$};
	\fill (0,0.75) circle (0.2pt) node[left] {$\frac 34$};

\end{tikzpicture}
\end{minipage}
\smallskip

Consequently $\widehat{f}$ is mixing thanks to Theroem~\ref{mainthminterval}. The latter fact can also be checked by showing that $\widehat{f}$ satisfies the HCL with respect to the full sequence, where $g(x)=\frac{x}{2}$ stands for the ``inverse" mapping. We also know that $\widehat{f}$ is Devaney chaotic thanks to Corollary~\ref{CorDC}.
As explained at the beginning of this section, $\widehat{f}$ is conjugate to an operator $T : L^1([0,1]) \to L^1([0,1])$ which maps any indicator function $\mathbf{1}_{[a,b]}, a<b$, to
\begin{equation*}
T(\mathbf{1}_{[a,b]}) =
     \left\{
        \begin{array}{rl}
        \mathbf{1}_{[2a,2b]} & \text{if } b\leq \frac{1}{2},   \\[0.5em]
        -\mathbf{1}_{[-2b+2,-2a+2]} & \text{if }  a\geq \frac{1}{2}.
        \end{array}
     \right.
\end{equation*}
One can check that  $T$ acts as a kind of backwards shift on the haar basis $(h_m)_{m}$ of $L^1([0,1])$ (more details will be given in the second example). Of course, this is not always the case: for instance the tent map is conjugate to the ``logistic map" $L(x) = 4x(1-x)$ (see \cite[Example~1.6]{GrPe}), and the operator $T' : L^1([0,1]) \to L^1([0,1]) $ associated to $\widehat{L}$ does not necessarily send a haar function to another. 
\end{example}

We now study another well-known example.
It is a Lipschitz map $f : [0,1] \to [0,1]$, with only one fixed point $1/2$, and such that $f$ is  Devaney chaotic but not weakly mixing. 

\begin{example} \label{Example2}
Let us consider the following Lipschitz map:
\medskip

\begin{minipage}{0.50\linewidth}
\begin{equation*}
f(x) =
     \left\{
        \begin{array}{rl}
        \frac 12 +2x & \text{if } 0 \leq x \leq \frac 14,   \\[0.5em]
         \frac 32 -2x & \text{if }  \frac 14 \leq x \leq  \frac 12 ,\\[0.5em]
         1-x & \text{if } \frac 12 \leq x \leq 1.
        \end{array}
     \right.
\end{equation*}
\end{minipage}
\begin{minipage}[c]{0.45\linewidth}
\begin{tikzpicture}[thick, scale=2.3]

    \draw[->,>=latex, gray] (-0.2,0) -- (1.2,0) node[above] {$x$};
	\draw[->,>=latex, gray] (0,-0.2) -- (0,1.2) node[right] {$y$};

	\draw[domain=0:0.25, blue,ultra thick, smooth] plot ({\x},{2*\x+0.5}) node[above, right] {$f(x)$};
	\draw[domain=0.25:0.5, blue,ultra thick, smooth] plot ({\x},{-2*\x+3/2}) ;
	\draw[domain=0.5:1, blue,ultra thick, smooth] plot ({\x},{1-\x}) ;

	\fill (0,0) circle (0.2pt) node[below left] {$0$};
	\fill (1,0) circle (0.2pt) node[below] {$1$};
	\fill (0.5,0) circle (0.2pt) node[below] {$\frac 12$};
	\fill (0.25,0) circle (0.2pt) node[below] {$\frac 14$};
	\fill (0.75,0) circle (0.2pt) node[below] {$\frac 34$};
	\fill (0,1) circle (0.2pt) node[left] {$1$};
	\fill (0,0.5) circle (0.2pt) node[left] {$\frac 12$};
	\fill (0,0.25) circle (0.2pt) node[left] {$\frac 14$};
	\fill (0,0.75) circle (0.2pt) node[left] {$\frac 34$};

\end{tikzpicture}
\end{minipage}
\smallskip

Note that here the metric space that we consider is $M=[0,1]$ with $\frac 12$ being the distinguished point of $M$. In what follows, it will be quite convenient to consider the second iterated of $f$, so we give its definition explicitly bellow.
\medskip

\begin{minipage}{0.50\linewidth}
\begin{equation*}
f^2(x) =
     \left\{
        \begin{array}{rl}
         \frac 12 -2x  & \text{if } 0 \leq x \leq \frac 14  , \\[0.5em]
         -\frac 12 + 2x & \text{if }  \frac 14 \leq x \leq  \frac 34 ,\\[0.5em]
         \frac 52 - 2x & \text{if } \frac 34 \leq x \leq 1.
        \end{array}
     \right.
\end{equation*}
\end{minipage}
\begin{minipage}[c]{0.45\linewidth}
\begin{tikzpicture}[thick, scale=2.3]

    \draw[->,>=latex, gray] (-0.2,0) -- (1.2,0) node[above] {$x$};
	\draw[->,>=latex, gray] (0,-0.2) -- (0,1.2) node[right] {$y$};

	\draw[domain=0:0.25, blue,ultra thick, smooth] plot ({\x},{-2*\x+0.5});
	\draw[domain=0.25:0.75, blue,ultra thick, smooth] plot ({\x},{2*\x-1/2})  node[above] {$f^2(x)$};
	\draw[domain=0.75:1, blue,ultra thick, smooth] plot ({\x},{5/2-2*\x}) ;

	\fill (0,0) circle (0.2pt) node[below left] {$0$};
	\fill (1,0) circle (0.2pt) node[below] {$1$};
	\fill (0.5,0) circle (0.2pt) node[below] {$\frac 12$};
	\fill (0.25,0) circle (0.2pt) node[below] {$\frac 14$};
	\fill (0.75,0) circle (0.2pt) node[below] {$\frac 34$};
	\fill (0,1) circle (0.2pt) node[left] {$1$};
	\fill (0,0.5) circle (0.2pt) node[left] {$\frac 12$};
	\fill (0,0.25) circle (0.2pt) node[left] {$\frac 14$};
	\fill (0,0.75) circle (0.2pt) node[left] {$\frac 34$};

\end{tikzpicture}
\end{minipage}
\medskip

We then have the following claims with respect to $f$ and $\widehat{f}$.
\medskip

\begin{enumerate}[$(i)$, itemsep=5pt]
    \item \textit{The map $f$ is Devaney chaotic, but not weakly mixing} (see \cite[Example~2.21]{Sy}).

    \item \textit{The map $f^2$ is not topologically transitive.} Indeed, notice that  $f^2([0,\frac 12]) \subset [0 , \frac 12]$ and $f^2([\frac 12 , 1]) \subset [\frac 12 , 1]$.

    \item \textit{The map $\widehat{f^2}$ satisfies the HCL.} Indeed, we claim that $\widehat{f^2}$ has the HCL where the ``inverse" map $g$ is defined for every $x \in [0,1]$, by $g(x) = \frac x2 + \frac 14$.
    
    \item \textit{The operator $\widehat{f}$ is mixing and Devaney chaotic.} First, since $f$ is transitive, notice that we can deduce that $\widehat{f}$ is weakly mixing and Devaney chaotic by applying Theorem~\ref{mainthminterval} and Corollary~\ref{CorDC}. Furthermore, one can prove directly that actually $\widehat{f}$ satisfies the  HCL condition with respect to the full sequence, where the ``inverse" map $g$ is defined by 
$$ g(x) = \left\{ \begin{array}{cc}
     1-x & \text{ if } 0 \leq x < \frac 12,  \\
     -\frac 12 x + \frac 34 & \text{ if } \frac 12 < x \leq 1.
\end{array}\right. $$
We consider the Dyadic numbers $\mathcal{D}$ in $[0,1]$ as the dense subset of $M$ on which one has to check the conditions given by the HCL.
Since we do not pass to a subsequence for proving those conditions,  $\widehat{f}$ is mixing.
\end{enumerate}

\begin{remark}
Since $g = f^2$ is not topologically transitive but $\widehat{g}$ is Devaney chaotic and mixing, the implication ``$\widehat{g}$ Devaney chaotic and mixing $\implies$ $g$ transitive" does not hold in general.
\end{remark}

We now descirbe the operator $S : L^1([0,1]) \to L^1([0,1])$ conjugate to $\widehat{f}$ by its action on the Haar basis $(h_m)_m$. 
Recall that the Haar functions $(h_m)_{m\geq 0}$ are given by $h_1 = \mathbf{1}_{[0,1]}$ and for every $n\geq 0$ and every $1\leq k \leq 2^n$:
$$h_{2^n+k}(t) = \mathbf{1}_{\left[\frac{2k-2}{2^{n+1}},\frac{2k-1}{2^{n+1}}\right]}(t) -  \mathbf{1}_{\left[\frac{2k-1}{2^{n+1}},\frac{2k}{2^{n+1}}\right]}(t), \ \ t\in [0,1].$$
It is well-known that $(h_m)_{m\geq 0}$ is a Schauder basis of $L^1([0,1])$. 
We have
$$S(h_1) = S(\mathbf{1}_{[0,1]}) = -\mathbf{1}_{[f(1),f(0)]} = -\mathbf{1}_{\left[0,\frac{1}{2}\right]},$$
$$S(h_2)=\mathbf{1}_{\left[0,\frac{1}{2}\right]}, ~ S(h_3)=2\mathbf{1}_{\left[\frac{1}{2}, 1\right]}, ~ S(h_4)=\mathbf{1}_{\left[0,\frac{1}{4}\right]} - \mathbf{1}_{\left[\frac{1}{4},\frac{1}{2}\right]}.$$
Now let $n\geq 2$. We distinguish three cases:\\
$\bullet$ If $0\leq k \leq 2^{n-2}$, the two intervals defining $h_{2^n+k}$ are included in $\left[0, \frac{1}{4} \right]$ so that
\begin{align*}
S(h_{2^n+k})
& = \mathbf{1}_{\left[\frac{1}{2}-\frac{2k-2}{2^{n}},\frac{1}{2}-\frac{2k-1}{2^{n}}\right]} - \mathbf{1}_{\left[\frac{1}{2}-\frac{2k-2}{2^{n}},\frac{1}{2}-\frac{2k-1}{2^{n}}\right]} \\
& = \mathbf{1}_{\left[\frac{2(2^{n-1}+k)-2}{2^{n}},\frac{2(2^{n-1}+k)-1}{2^{n}}\right]} - \mathbf{1}_{\left[\frac{2(2^{n-1}+k)-1}{2^{n}},\frac{2(2^{n-1}+k)}{2^{n}}\right]} \\
& = h_{2^{n-1}+2^{n-2}+k}.
\end{align*}
$\bullet$ If $2^{n-2}+1\leq k \leq 2^{n-1}$, the two intervals that define $h_{2^n+k}$ are included in $\left[\frac{1}{4}, \frac{1}{2} \right]$ and with similar computations we get
$$
S(h_{2^n+k}) = h_{2^{n-1}+3.2^{n-2}-k+1}.
$$
$\bullet$ Finally, if $2^{n-1}+1\leq k \leq 2^n$, both intervals that define $h_{2^n+k}$ are included in $\left[\frac{1}{2}, 1 \right]$ and we have
$$
S(h_{2^n+k}) = h_{2^n+2^n-k+1}.
$$
\end{example}

\subsection{An extension to some compact \texorpdfstring{$\R$-trees}{R-trees}}

Our Theorem~\ref{mainthminterval} can actually be extended to a more general setting.
Recall that an $\R$-tree
is an arc-connected metric space $(M,d)$ with the property that there is a unique arc connecting any pair of points $x\neq y\in M$ and it moreover is isometric to the real segment $[0,d(x,y)]\subset\R$.
A point $x\in M$ is called a branching point if $M\setminus\set{x}$ has at least three connected components; we let $\mathrm{Br}(M)$ the set of all branching point of $M$. The main ingredient for proving Theorem~\ref{mainthminterval} was the decomposition given by \cite[Theorem~2.19]{Sy}. A similar result actually holds for compact trees $M$ such that $M \setminus Br(M)$ has finitely many connected components: If $f : M \to M$ is a continuous and transitive map, then
\begin{itemize}
    \item Either $f$ is mixing,
    \item Or there is a positive integer $n_0$ such that there are an interior fixed point $c$ and subtrees $M_1,\ldots,M_{n_0}$ of $M$ with $\cup_i M_i = M$, $M_i \cap M_j = \{c\}$ whenever $i \neq j$ and $f(M_i) =M_{i+1 (\mathrm{mod} \; n_0)}$ for $1 \leq  i\leq n_0$. Moreover, $f^{n_0}\restriction_{M_i}$ is mixing for every $1 \leq i \leq n_0$.
\end{itemize}
The previous statement can be found in \cite[Proposition~2.6]{Wang2011} and it is based on \cite[Proposition 3.1]{Alseda}.
Therefore, we can state the following (the proof is similar to the one of Theorem~\ref{mainthminterval} and left to the reader).

\begin{theorem} \label{ThmRtrees}
Let $M$ be a compact $\R$-tree such that $M \setminus Br(M)$ has finitely many connected components, and let $f:M\to M$ be a Lipschitz and topologically transitive map with a fixed point $c \in M$. Then either $\widehat{f}$ is mixing or there exists $n_0 \in \mathbb N$ such that $\widehat{f}^{n_0}$ is mixing. In any case, $\widehat{f}$
 is weakly mixing.
\end{theorem}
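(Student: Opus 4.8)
The plan is to mimic the proof of Theorem~\ref{mainthminterval}, replacing the Barge--Martin interval decomposition by the tree decomposition recorded just above (from \cite[Proposition~2.6]{Wang2011}). First I would apply that decomposition to the transitive Lipschitz map $f : M \to M$. In the first case, $f$ is mixing, and then $\widehat{f}$ is mixing by \cite[Theorem~2.3]{MuPe}, and in particular weakly mixing. So the real content lies in the second case: there are an interior fixed point $c$, a positive integer $n_0$, and subtrees $M_1,\ldots,M_{n_0}$ with $\bigcup_i M_i = M$, $M_i \cap M_j = \{c\}$ for $i\neq j$, $f(M_i) = M_{i+1\,(\mathrm{mod}\,n_0)}$, and each $f^{n_0}\restriction_{M_i}$ mixing. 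It suffices to prove that $\widehat{f}^{\,n_0} = \widehat{f^{n_0}}$ is mixing, since an operator having a mixing power is itself weakly mixing (a mixing operator is weakly mixing, and $T^{n_0}$ weakly mixing implies $T$ weakly mixing -- this is standard, e.g. via the Hypercyclicity Criterion characterisation in \cite{BesPeris}, and can be invoked as in the interval case).

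Next I would set up the analogue of the $L^1$-splitting used in Theorem~\ref{mainthminterval}. Take $c$ as the base point of $M$. Then, because $M_i \cap M_j = \{c\}$ and $\bigcup_i M_i = M$, the Lipschitz-free space decomposes as an $\ell_1$-sum: $\F(M) = \bigoplus_{i=1}^{n_0} \F(M_i)$ (where each $\F(M_i)$ is canonically identified with the subspace $\overline{\lspan}\{\delta(x) : x \in M_i\}$ of $\F(M)$, as recalled in Subsection~\ref{Subsection-Lipfree}; that the sum is isometric and $\ell_1$ follows from the fact that a Lipschitz function on $M$ vanishing at $c$ is exactly a tuple of Lipschitz functions on the $M_i$ vanishing at $c$, with $\mathrm{Lip}$ the max). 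The map $g := f^{n_0}$ satisfies $g(M_i) \subseteq M_i$ and $g(c) = c$, hence $\widehat{g}$ leaves each $\F(M_i)$ invariant; denote by $T_i := \widehat{g}\restriction_{\F(M_i)} = \widehat{g\restriction_{M_i}} : \F(M_i) \to \F(M_i)$ its restriction, which is mixing since $g\restriction_{M_i} = f^{n_0}\restriction_{M_i}$ is a mixing self-map of the (compact) tree $M_i$ and mixing of the metric map transfers to mixing of $\widehat{\cdot}$ via \cite[Theorem~2.3]{MuPe}. So $\widehat{g}$ is, up to the $\ell_1$-identification, the direct sum $T_1 \oplus \cdots \oplus T_{n_0}$ of finitely many mixing operators.

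Finally I would prove that a finite $\ell_1$-direct sum of mixing operators is mixing -- exactly the ball-chasing argument from the proof of Theorem~\ref{mainthminterval}, written for $n_0$ summands instead of two. Given open balls $B(u,r)$ and $B(v,r')$ in $\F(M)$, write $u = \sum_i u_i$, $v = \sum_i v_i$ with $u_i, v_i \in \F(M_i)$; since each $T_i$ is mixing, there is $N$ such that for all $n \geq N$ and all $i$, $T_i^n(B(u_i, r/n_0)) \cap B(v_i, r'/n_0) \neq \emptyset$; picking $u_{i,n}$ in these intersections and setting $u_n := \sum_i u_{i,n}$, the $\ell_1$-norm gives $u_n \in B(u,r)$ and $\widehat{g}^{\,n}(u_n) = \sum_i T_i^n(u_{i,n}) \in B(v,r')$, so $\widehat{g}^{\,n}(B(u,r)) \cap B(v,r') \neq \emptyset$ for all $n \geq N$. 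Hence $\widehat{g} = \widehat{f}^{\,n_0}$ is mixing, and therefore $\widehat{f}$ is weakly mixing; if moreover the first case of the dichotomy occurs, $\widehat{f}$ itself is mixing. The only mild obstacle I anticipate is bookkeeping: being careful that the $\ell_1$-decomposition of $\F(M)$ along subtrees glued at a common point is genuinely isometric (so that the radii add up correctly in the ball-chasing step), and checking that the base point $c$ may indeed be chosen to be the interior fixed point without affecting the dynamical conclusions (which is exactly the content of the base-point change remark in Subsection~\ref{Subsection-Lipfree}). Both are routine given the tools already set up, which is why the statement says the proof is left to the reader.
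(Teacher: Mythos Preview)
Your proposal is correct and follows precisely the route the paper intends: it mirrors the proof of Theorem~\ref{mainthminterval}, replacing the Barge--Martin dichotomy by the tree decomposition from \cite{Wang2011}, and then runs the same ``direct sum of mixing operators is mixing'' ball-chasing argument on the pieces. The only cosmetic difference is that you work directly with the $\ell_1$-decomposition $\F(M)=\bigoplus_{i=1}^{n_0}\F(M_i)$ (glued at $c$) instead of passing through an $L^1(\mu)$ model as in the interval case; this is in fact slightly cleaner and exactly what the paper's ``left to the reader'' has in mind.
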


It is proved in \cite{carac} that a separable metric space $(M,d)$ is locally arcwise connected and uniquely arcwise connected if and only if it admits an equivalent metric $d'$ such that $(M,d')$ is an $\R$-tree. Since $\F(M,d)$ and $\F(M,d')$ are isomorphic, the previous theorem therefore applies to an even more general class of spaces.
In the same way we can also state the following extension of Corollary~\ref{CorDC}.
\begin{corollary}
Let $M$ be a compact $\R$-tree such that $M \setminus Br(M)$ has finitely many connected components. Let $f:M\to M$ be a Lipschitz and topologically transitive map with a fixed point $c \in M$. Then $\widehat{f}$ is Devaney chaotic
\end{corollary}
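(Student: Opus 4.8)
The plan is to combine the two ingredients that have just been established: the dynamical decomposition for transitive maps on compact $\R$-trees (quoted from \cite{Wang2011}, based on \cite{Alseda}) and the transfer of periodicity/hypercyclicity through the functor $f \mapsto \widehat f$. The argument runs exactly parallel to the proof of Corollary~\ref{CorDC}, only replacing the interval result \cite{VeBe} by its tree analogue. First I would invoke Theorem~\ref{ThmRtrees}: since $f : M \to M$ is Lipschitz, topologically transitive and has a fixed point, $\widehat f$ is weakly mixing, hence in particular hypercyclic. So the ``topological transitivity'' half of Devaney chaoticity for $\widehat f$ is for free.

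It then remains to check that the periodic points of $\widehat f$ are dense in $\F(M)$, and by Proposition~\ref{PropPeriodic} it suffices to show that $\mathrm{Per}(f)$ is dense in $M$. This is where I would use the tree analogue of the Vellekoop--Berglund theorem: a continuous transitive self-map of a (dendrite / compact $\R$-tree with finitely many components of $M \setminus \mathrm{Br}(M)$) has dense set of periodic points. Concretely, in case (i) of the decomposition $f$ is mixing on the whole tree, and in case (ii) each $f^{n_0}\restriction_{M_i}$ is mixing on the subtree $M_i$; in either situation one can fall back on the fact that a mixing continuous map of a compact $\R$-tree of this type has dense periodic points (this is part of the structure theory in \cite{Wang2011, Alseda}), and in case (ii) the finitely many subtrees cover $M$ and the union of their periodic sets (plus the fixed point $c$) is dense in $M$; note a periodic point of $f^{n_0}\restriction_{M_i}$ is a periodic point of $f$. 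Putting $\mathrm{Per}(f)$ dense together with Proposition~\ref{PropPeriodic} gives $\overline{\mathrm{Per}(\widehat f)} = \F(M)$.

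Finally I would assemble: $\widehat f$ is hypercyclic (hence topologically transitive on $\F(M)$, which has no isolated points) and has dense periodic points, so by definition $\widehat f$ is Devaney chaotic. The main obstacle, and the only genuinely non-routine point, is the density of $\mathrm{Per}(f)$: one must be sure that the quoted structure theory for transitive maps on this class of compact $\R$-trees indeed yields (directly, or via the mixing pieces) a dense set of periodic points for $f$ — this plays the role that \cite{VeBe} played in the interval case. Once that is in hand, everything else is a verbatim repetition of the proof of Corollary~\ref{CorDC}.
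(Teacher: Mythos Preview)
Your proposal is correct and follows essentially the same approach as the paper: invoke Theorem~\ref{ThmRtrees} for hypercyclicity of $\widehat f$, establish density of $\mathrm{Per}(f)$ via the tree analogue of \cite{VeBe}, and transfer this to $\widehat f$ through Proposition~\ref{PropPeriodic}. The only difference is that the paper cites \cite[Lemma~2.3]{Wang2011} directly (transitive $\Leftrightarrow$ Devaney chaotic on such trees), making your detour through the mixing decomposition to obtain dense periodic points unnecessary --- though it does work, since mixing implies transitive and the same lemma then applies to each piece.
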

\begin{proof}
It is proved in \cite[Lemma 2.3]{Wang2011} that $f : M \to M$ is topologically transitive if and only if it is Devaney chaotic. Therefore, exactly as in the proof of Corollary~\ref{CorDC}, we use Proposition~\ref{PropPeriodic} as well as Theorem~\ref{ThmRtrees} to conclude.
\end{proof}

\addtocontents{toc}{\protect\setcounter{tocdepth}{0}}
\section*{Acknowledgments}

Part of this work was carried out when the second named author was working in the ``Laboratoire d'analyse et de math\'ematiques appliqu\'ees" in Marne-la-Vall\'ee.
He is deeply grateful for the excellent working conditions there. 
The authors would also like to thank Romuald Ernst and Anton\'in Proch\'azka for useful conversations, as well as Evgeny Abakumov and St\'ephane Charpentier for valuable comments which improved the presentation of this paper.



\begin{thebibliography}{99}


\bibitem{AlbiacKalton}  F. Albiac and N.J. Kalton, \textit{Lipschitz structure of quasi-Banach spaces}, Israel J. Math. 170 (2009), 317--335. 

\bibitem{AP20} R.J. Aliaga and E. Perneck\'a, \textit{Supports and extreme points in Lipschitz-free spaces}, Rev. Mat. Iberoam. (2020), in press, arXiv:1810.11278.

\bibitem{APPP_2019}
	R. J. Aliaga, E. Perneck\'a, C. Petitjean and A. Proch{\'{a}}zka,
	\emph{Supports in Lipschitz-free spaces and applications to extremal structure},
	arXiv:1909.08843.

\bibitem{AlPePr_2019}
	R. J. Aliaga, C. Petitjean and A. Proch{\'{a}}zka,
	\emph{Embeddings of Lipschitz-free spaces into $\ell_1$},
	arXiv:1909.05285.
	
	
\bibitem{splitingtheorem} Ll. Alsed\`a,  M. A. del Río and J. A. Rodríguez, 
\textit{A splitting theorem for transitive maps},
J. Math. Anal. Appl. 232 (1999), no. 2, 359--375. 	
	
	
\bibitem{Alseda}	
	Ll. Alsed\`a, S. Kolyada, J. Llibre and \v{L}. Snoha,
Entropy and periodic points for transitive maps,
Trans. Amer. Math. Soc. 351 (1999), no. 4, 1551--1573. 
	
\bibitem{Ansari} S. I. Ansari, 
\textit{Existence of hypercyclic operators on topological vector spaces},
Journal of Functional Analysis, 148 (1997), no. 2, 384--390. 	
	
	
\bibitem{Barge} M. Barge and J. Martin,
\textit{Chaos, periodicity, and snakelike continua},
Trans. Amer. Math. Soc. 289 (1985), no. 1, 355--365. 
	
\bibitem{Survey}
F. Bayart and E. Matheron,
\emph{Dynamics of linear operators},
Cambridge Tracts In Mathematics, 179 (2009), xiv+337. 

\bibitem{BesPeris}
J. B\`es and A. Peris,
\emph{Hereditarily Hypercyclic Operators}, J. Funct. Anal. 167 (1999) 94--112.

\bibitem{Bourgain} J. Bourgain,
\textit{Real isomorphic complex Banach spaces need not be complex isomorphic},
Proc. Amer. Math. Soc. 96 (1986), no. 2, 221--226. 

\bibitem{vargas2} M. G. Cabrera-Padilla and A. Jim\'enez-Vargas,\textit{ A new approach on Lipschitz compact operators},
Topology Appl. 203 (2016), 22--31.


\bibitem{DR}
	M. De La Rosa and C. Read, 
	\emph{A hypercyclic operator whose direct sum $T\oplus T$ is not hypercyclic},
	Journal of Operator Theory, 61 (2009), no. 2, 369--380.
	
\bibitem{DSW}
W. Desch, W. Schappacher and G. F. Webb,
	\emph{Hypercyclic and chaotic semigroups of linear operators},
	Ergod. Th. and Dynam. Sys. 1997), 17, 793--819.


\bibitem{DuMoZa_2013}
	M. V. Dubey, Z. H. Mozhyrovska, A. V. Zagorodnyuk,
	\emph{Hypercyclic Operators on Lipschitz Spaces},
	Matematychni Studii. V.39, No.1.


\bibitem{Feldman} N.S. Feldman, \textit{Linear chaos}, preprint, (2001),  available  at\\ \href{https://feldman.academic.wlu.edu/files/pdffiles/LinearChaos.pdf}{https://feldman.academic.wlu.edu/files/pdffiles/LinearChaos.pdf}.

\bibitem{Godard_2010}
	A. Godard,
	\emph{Tree metrics and their Lipschitz-free spaces},
	Proc. Amer. Math. Soc. 138 (2010), no. 12, 4311--4320.

\bibitem{GoKa_2003}
  G. Godefroy and N. J. Kalton,
	\emph{Lipschitz-free Banach spaces},
  Studia Math. 159 (2003), 121--141.
 

\bibitem{GrPe} K.G. Grosse-Erdmann and A. Peris,
\emph{ Linear Chaos,}
Universitext Series. Springer. (2011).
     
 
\bibitem{vargas} A. Jim\'enez-Vargas, J.M. Sepulcre and M. Villegas-Vallecillos,
\textit{Lipschitz compact operators,}
J. Math. Anal. Appl. 415 (2014), no. 2, 889--901.
 
 \bibitem{Kalton04} N. J. Kalton, \textit{Spaces of Lipschitz and Hölder functions and their applications}, Collect. Math. 55 (2004), no. 2, 171–217.


\bibitem{carac} J. C. Mayer, L. K. Mohler, L. G. Oversteegen and E. D. Tymchatyn, 
\textit{Characterization of separable metric R-trees},
Proc. Amer. Math. Soc. 115 (1992), no. 1, 257--264. 
 
 \bibitem{MuPe}
  M. Murillo-Arcila and A. Peris,
	\textit{Chaotic behaviour on invariant sets of linear operators},
  Integral Equations and Operator Theory, 81 (2015), 483--497.  
 
 \bibitem{Nagar} A. Nagar and S. P.Sesha Sai,
\textit{Some classes of transitive maps on $\R$},
J. Anal. 8 (2000), 103--111. 


\bibitem{RosendalTalk} C. Rosendal, \textit{Two applications of Arens-Eells spaces to geometric group theory and abstract harmonic analysis}, presentation at the ``Banach spaces webinars", June 19 (2020), available at \href{http://www.math.unt.edu/~bunyamin/banach}{http://www.math.unt.edu/~bunyamin/banach}.  
 
\bibitem{Sy}
   S. Ruette,
	\emph{Chaos on the Interval},
  American Mathematical Soc., (2017)

  
  \bibitem{VeBe}
  M. Vellekoop, R. Berglund,
	\emph{On Intervals, Transitivity = Chaos},
  The American Mathematical Monthly, 101 (1994), 353-355. 
  
\bibitem{Wang2011}  H. Wang and H. Fu,
\emph{Some remarks on chaos in topological dynamics},
Appl. Gen. Topol. 12 (2011), no. 2, 95–100.
  
\bibitem{Weaver2}
  N. Weaver,
	\emph{Lipschitz algebras}, 2nd ed.,
	World Scientific Publishing Co., River Edge, NJ, 2018.


\end{thebibliography}
\end{document}